\newcounter{thm}
\newtheorem{proposition}[thm]{Proposition}
\newtheorem{lemma}[thm]{Lemma}
\newtheorem*{maintheorem}{Main Theorem}
\newtheorem{theorem}[thm]{Theorem}
\newtheorem{corollary}[thm]{Corollary}
\newtheorem{remark}[thm]{Remark}
\theoremstyle{definition}
\newtheorem*{definition*}{Definition}
\newcounter{step}[section]
\newcommand{\step}{\refstepcounter{step}\par\medskip\noindent{\bfseries Step \thestep.\ }}
\renewcommand{\Re}{\mathop{\mathrm{Re}}}
\renewcommand{\Im}{\mathop{\mathrm{Im}}}
\newcommand{\eps}{\varepsilon}
\newcommand{\bbC}{\mathbb C}
\newcommand{\bbD}{\mathbb D}
\newcommand{\bbH}{\mathbb H}
\newcommand{\bbR}{\mathbb R}
\newcommand{\bbZ}{\mathbb Z}
\newcommand{\cal}{\mathcal}
\newcommand{\CZ}{{\bbC/\bbZ}}
\newcommand{\RZ}{{\bbR/\bbZ}}
\newcommand{\CZbar}{\overline{\CZ}}
\newcommand{\HZ}{{\bbH/\bbZ}}
\newcommand{\HZbar}{\overline{\HZ}}
\newcommand{\HpmZ}{{\bbH^\pm/\bbZ}}
\newcommand{\HpmZbar}{\overline{\HpmZ}}
\newcommand{\HpZ}{{\bbH^+/\bbZ}}
\newcommand{\HpZbar}{\overline{\HpZ}}
\newcommand{\HmZ}{{\bbH^-/\bbZ}}
\newcommand{\HmZbar}{\overline{\HmZ}}
\def\i{{\rm i}}
\newcommand{\dfn}{{:=}}
\newcommand{\dx}{{\ {\rm d}}}
\newcommand{\quadand}{{\quad\text{and}\quad}}
\DeclareMathOperator{\rot}{rot}
\DeclareMathOperator{\Per}{Per}
\DeclareMathOperator{\distance}{dist}
\DeclareMathOperator{\distortion}{dis}
\newcommand{\E}{\mathfrak E}
\title{Complex rotation numbers}
\subjclass[2010]{Primary: 37E10; Secondary: 37E45 and 30C62}
\keywords{complex tori, rotation numbers, diffeomorphisms of the circle, quasiconformal maps}
\email{xavier.buff@math.univ-toulouse.fr}
\address{ %
  Institut de Math\'ematiques de Toulouse\\
  Universit\'e Paul Sabatier\\
  118, route de Narbonne \\
  31062 Toulouse Cedex \\
  France }
\email{natalka@mccme.ru}
\address{%
National Research University Higher School of Economics \\
Russia, Moscow, Miasnitskaya Street 20\\
and Independent University of Moscow\\
Russia, Moscow, Bolshoy Vlasyevskiy Pereulok 11}
\thanks{The research of the first author was supported by the IUF. The research of the second author was supported by the following grants: RFBR projects 10-01-00739-a and 12-01-33020 mol$\_$a$\_$ved, joint RFBR/CNRS project 10-01-93115-CNRS$\_$a, Moebius Contest Foundation for Young Scientists, Simons Foundation.}
\begin{document}

\begin{abstract}
We investigate the notion of complex rotation number which was introduced by V.\,I.\,Arnold in 1978. Let $f\colon\RZ\to \RZ$ be an orientation preserving circle diffeomorphism and let $\omega\in \CZ$ be a parameter with positive imaginary part. Construct a complex torus by glueing the two boundary components of the annulus $\Set{z\in \CZ|0<\Im(z)<\Im(\omega)}$ via the map $f+\omega$. This complex torus is isomorphic to $\bbC/(\bbZ+\tau\bbZ)$ for some appropriate $\tau\in \CZ$.

According to Moldavskis \cite{M}, if the ordinary rotation number $\rot(f+\omega_0)$ is Diophantine and if $\omega$ tends to $\omega_0$ non tangentially to the real axis, then $\tau$ tends to $\rot(f+\omega_0)$. We show that the Diophantine and non tangential assumptions are unnecessary: if $\rot(f+\omega_0)$ is irrational then $\tau$ tends to $\rot(f+\omega_0)$ as $\omega$ tends to $\omega_0$.

This, together with results of N.Goncharuk \cite{NG}, motivates us to introduce a new fractal set (``bubbles''), given by the limit values of $\tau$ as $\omega$ tends to the real axis. For the rational values of $\rot (f+\omega_0)$, these limits do not necessarily coincide with $\rot (f+\omega_0)$ and form a countable number of analytic loops in the upper half-plane.
\end{abstract}

\maketitle

Notation:
\begin{itemize}
\item $\bbH=\bbH^+$ is the set of complex numbers with positive imaginary part.
\item $\bbH^-$ is the set of complex numbers with negative imaginary part.
\item If $p/q$ is a rational number, then $p$ and $q$ are assumed to be coprime.
\item If $x$ and $y$ are distinct points in $\RZ$, then $(x,y)$ denotes the set of points $z\in \RZ-\set{x,y}$ such that the three points $x,z,y$ are in increasing order and
$[x,y] \dfn (x,y)\cup \set{x,y}$.
\item $\rot (f) \in \RZ$ is a rotation number of an orientation-preserving circle diffeomorphism $f$.
\item If $f\colon\RZ\to \RZ$ is a circle diffeomorphism, $\displaystyle D_f \dfn  \int_{\RZ} \left|\frac{f''(x)}{f'(x)}\right| \dx x.$
\end{itemize}

\section*{Introduction}

Given an orientation preserving analytic circle diffeomorphism $f\colon\RZ\to \RZ$ and a parameter $\omega\in \HZ$, set
\[f_\omega\dfn f+\omega\colon\RZ\to \RZ+\omega.\]
The circles $\RZ$ and $\RZ+\omega$ bound an annulus $A_\omega\subset \CZ$. Glueing the two sides of $A_\omega$ via $f_\omega$, we obtain a complex torus $E(f_\omega)$, which
may be uniformized as $\cal{E}_\tau \dfn \bbC/(\bbZ+\tau\bbZ)$ for some appropriate $\tau\in \HZ$, the homotopy class of $\RZ$ in $E(f_\omega)$ corresponding
to the homotopy class of $\RZ$ in $\cal{E}_\tau$.
The \emph{complex rotation number} of $f_\omega$ is $\tau_f(\omega) \dfn \tau$. It is the complex analogue of the ordinary rotation number of $f+t$ for $t\in \RZ$.

V.\,I.\,Arnold's problem \cite{Arn}, generalized by R. Fedorov and E. Risler independently, is to study the relation of the ordinary rotation number of the circle diffeomorphism
$f\colon\RZ\to \RZ$ and the limit behaviour of the complex rotation number $\tau_f(\omega)$ as $\omega$ tends to $0$.

According to work of Risler \cite[Chapter 2, Proposition 2]{Ris}, the function
\[\tau_f\colon\HZ\to  \HZ\] is holomorphic.
We shall show that there is a continuous extension of $\tau_f$ to
\[\HZbar \dfn \HZ\cup \RZ.\]

The ordinary rotation number of a circle homeomorphism $f\colon\RZ\to \RZ$ is defined as follows.
Let $F\colon\bbR\to \bbR$ be a lift of $f\colon\RZ\to \RZ$. Such a lift is unique up to addition of an integer. The sequence of functions $\frac{1}{n}\bigl(F^{\circ n}-{\rm id}\bigr)$
converges uniformly to a constant function $\Theta$. If we replace $F$ by $F+k$ with $k\in \bbZ$, the limit $\Theta$ is replaced by $\Theta+k$, so that the value $\rot(f)\in
\RZ$ of $\Theta$ modulo $1$ only depends on $f$. This is the rotation number of $f$. Note that the rotation number is rational if and only if the circle homeomorphism has a
periodic orbit.

Our main result, proved in Section \ref{sec_contin},  concerns the behavior of $\tau_f(\omega)$ as $\omega$ tends to $\RZ$.
Recall that a periodic orbit of a circle diffeomorphism is called \emph{parabolic} if its multiplier is 1, and it is called \emph{hyperbolic} otherwise. A circle
diffeomorphism with periodic orbits is called \emph{hyperbolic} if it has only hyperbolic periodic orbits.

\begin{maintheorem}
Let $f\colon\RZ\to \RZ$ be an orientation preserving analytic circle diffeomorphism. Then, the  function $\tau_f\colon\HZ\to \HZ$ has a continuous extension $\bar\tau_f\colon\HZbar\to \HZbar$. Assume $\omega\in \RZ$.
\begin{itemize}
\item If $\rot(f_\omega)$ is irrational, then $\bar \tau_f(\omega)=\rot(f_\omega)$.
\item If $\rot(f_\omega)=p/q$ is rational, then $\bar \tau_f(\omega)$ belongs to the closed disk of radius $D_f/(4\pi q^2)$ tangent to $\RZ$ at $p/q$; moreover
\begin{itemize}
\item if $f_\omega$ has a parabolic cycle, then $\bar \tau_f(\omega)=\rot(f_\omega)$.
\item if $f_\omega$ is hyperbolic, then $\bar \tau_f(\omega) \in \HZ$, in particular $\bar \tau_f(\omega)\neq
\rot(f_\omega)$.
\end{itemize}
\end{itemize}
\end{maintheorem}

Our main contribution to this result is the case of irrational (yet not Diophantine) rotation number, and the continuous extension of $\tau_f$ to the whole boundary $\RZ$.
The particular case of this theorem:
\begin{corollary}
If $\rot (f)$ is irrational, then  $\tau_f(\omega) $ converges to $\rot (f)$ when $\omega$ goes to zero.
\end{corollary}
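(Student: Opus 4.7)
The corollary is an immediate specialization of the Main Theorem to the single boundary point $\omega_0 = 0 \in \RZ$, so the plan is essentially to verify that the hypotheses of the first bullet of the Main Theorem apply and then invoke continuity of the extension.

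First I would note that, by definition, $f_0 = f + 0 = f$, so $\rot(f_0) = \rot(f)$. The hypothesis of the corollary is precisely that this rotation number is irrational. Thus the first bullet of the Main Theorem applies with $\omega = 0$ and yields the boundary value
\[
\bar\tau_f(0) = \rot(f_0) = \rot(f).
\]

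Next I would use the continuity statement of the Main Theorem: the extension $\bar\tau_f\colon \HZbar \to \HZbar$ is continuous, so for any sequence $\omega_n \in \HZ$ with $\omega_n \to 0$, one has $\tau_f(\omega_n) = \bar\tau_f(\omega_n) \to \bar\tau_f(0) = \rot(f)$. Equivalently, $\tau_f(\omega) \to \rot(f)$ as $\omega \to 0$ inside $\HZ$, which is the statement of the corollary.

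There is essentially no obstacle at this level, since the corollary is a direct logical consequence of the Main Theorem (whose proof is the hard part: producing a continuous extension all the way to $\RZ$ and identifying its value at irrational points, without the Diophantine assumption of Moldavskis). In a write-up I would therefore present this as a one-line deduction, perhaps explicitly remarking that it recovers and strengthens Moldavskis' result \cite{M} by removing both the Diophantine hypothesis on $\rot(f)$ and the non-tangential restriction on the approach $\omega \to 0$.
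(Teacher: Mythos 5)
Your proposal is correct and matches the paper's treatment exactly: the paper presents this corollary as the particular case of the Main Theorem obtained by specializing to $\omega_0 = 0$, and the continuity of $\bar\tau_f$ on $\HZbar$ together with the irrational-case identification $\bar\tau_f(0)=\rot(f)$ is precisely the intended deduction. No gaps.
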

solves the problem posed by {\'E}tienne Ghys (however he refers to V.Arnold) \cite[p. 25]{Ghyslist}.

The case of Diophantine rotation numbers was investigated earlier by E.Risler \cite[Chapter 2]{Ris} and V.Moldavskis \cite{M} independently. Risler constructed the map $\tau_f$ in a some subset $\Omega_s$ of $\HZ$; $\Omega_s$ is detached from points $\omega \in \RZ$ with $\rot (f_{\omega}) \in \mathbb Q/ \mathbb Z$. He also studied the behavior of $\tau_f$ and obtained some formulas and estimates on its derivatives; in particular, he proved that  $\tau_f$ is injective on $\Omega_s$ provided that $f$ is close to rotation.

The case of parabolic cycles was studied by J.Lacroix (unpublished) and N.Goncharuk \cite{NG} independently. The case of hyperbolic diffeomorphisms was dealt first by Yu. Ilyashenko and V. Moldavskis \cite{YuI_M}, then this result was improved by N.Goncharuk \cite{NG}. For exact statements of these results, see Section \ref{sec-proof}.

In Appendix \ref{sec_inf}, we shall also study the behavior of $\tau_f(\omega)$ as the imaginary part of $\omega$ tends to $+\infty$.

\section{Bubbles: a new fractal set}

The Main Theorem enables us to define a new interesting fractal set, related to the circle diffeomorphism, namely the set $\bar \tau_f(\RZ)$. Due to the Main Theorem, this set contains $\RZ$ and a countable number of loops --- “bubbles”, the endpoints of bubbles are rational points of $\RZ$ (see the sketch in Fig. \ref{fig-bubbles}). Due to Theorem \ref{th_mine}, these loops are analytic curves.
\begin{figure}
\includegraphics[width=0.8 \textwidth]{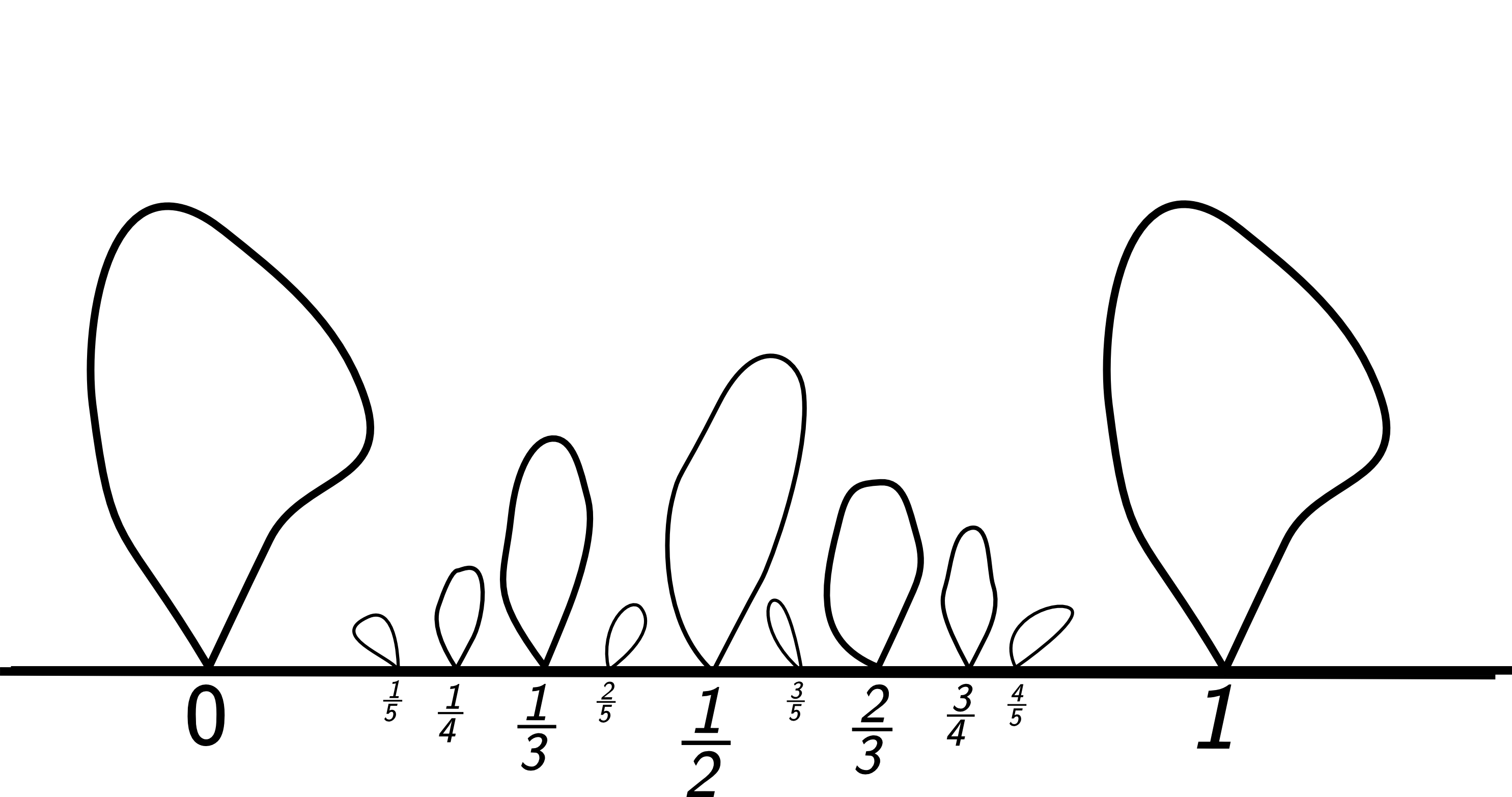}
\caption{Bubbles. The sketch of the set $\bar \tau_f(\RZ)$.}
\label{fig-bubbles}
\end{figure}

There are many natural questions about the geometrical structure of the set $\bar \tau_f(\RZ)$:
\begin{enumerate}
	\item \label{it:univ}Is it true that $\bar \tau_f(\RZ)$ is the boundary of $\tau_f(\HZ)$, and $\tau_f$ is univalent?
	\item \label{it:size}How large are bubbles?
	\item \label{it:inter}Do different bubbles intersect each other?
	\item \label{it:shape}What is the shape of a bubble? In particular, could a bubble be self-intersecting?
	\item \label{it:bundle} What can be said about the shape of a ``bubble bundle'', when several bubbles grow from the same point of the real axis (see Fig. \ref{fig-bundle})?
\end{enumerate}

We disprove the conjecture of item (\ref{it:univ}), see Corollary \ref{coroll-noninj} at the end of this Section.

As for  item (\ref{it:size}), the following lemma is a part of Main Theorem:

\begin{lemma} (Size of bubbles)
 \label{lemma-bubblesize}
 The bubble corresponding to $\rot (f_{\omega})=p/q$ belongs to the disk tangent to $\RZ$ at $p/q$ with radius $C/q^2$, where $C = D_f /(4\pi)$. If $f$ is $C^1-$close to a rotation, then  $C$ is close to $0$.
 \end{lemma}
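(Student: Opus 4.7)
The plan is to deduce both assertions essentially directly from the Main Theorem, with only a small amount of extra bookkeeping. My strategy has two steps.

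First, I unpack what ``the bubble corresponding to $\rot(f_\omega)=p/q$'' means. By the standard monotonicity of the rotation number with respect to $\omega\in\RZ$, the set $I_{p/q}\dfn\Set{\omega\in\RZ|\rot(f_\omega)=p/q}$ is a closed (possibly degenerate) subinterval of $\RZ$, and the corresponding bubble is by definition its image $\bar\tau_f(I_{p/q})$ under the continuous extension provided by the Main Theorem. Applying the Main Theorem pointwise at each $\omega\in I_{p/q}$ places $\bar\tau_f(\omega)$ in the closed disk $\Delta_{p/q}$ of radius $D_f/(4\pi q^2)=C/q^2$ tangent to $\RZ$ at $p/q$. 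Since this disk does not depend on the choice of $\omega\in I_{p/q}$, taking the union over $\omega$ gives $\bar\tau_f(I_{p/q})\subset\Delta_{p/q}$, which is the first claim.

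For the second claim, I would argue directly from the definition
\[
D_f=\int_{\RZ}\left|\frac{f''(x)}{f'(x)}\right|\dx x.
\]
If $f$ is the rigid rotation $x\mapsto x+t$, then $f''\equiv 0$ and hence $D_f=0$. When $f$ approaches a rotation in a topology that forces $\sup|f''|\to 0$ while keeping $\inf|f'|$ uniformly away from zero, the integrand becomes uniformly small on the compact circle $\RZ$, so $D_f\to 0$, and therefore $C=D_f/(4\pi)\to 0$. I would flag as a minor caveat that the stated hypothesis ``$C^1$-close to a rotation'' is really of $C^2$ type, because $D_f$ depends on $f''$; this is either a minor imprecision or it should be understood that the norm measuring closeness to a rotation includes a bound on the second derivative.

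The main ``obstacle'' here is essentially non-existent: the whole content of this lemma is already packaged inside the Main Theorem, plus the elementary remark that $D_f$ depends continuously on $f$ and vanishes at rotations. The only point worth verifying carefully is that the bubble is indeed $\bar\tau_f(I_{p/q})$ for a single connected interval $I_{p/q}$ (so that the disk estimate applies uniformly across the bubble); this is standard, and once granted, the lemma is an immediate corollary of the Main Theorem.
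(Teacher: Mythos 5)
Your reduction of Lemma \ref{lemma-bubblesize} to the Main Theorem is logically circular. The disk estimate in the Main Theorem's rational case (``$\bar\tau_f(\omega)$ belongs to the closed disk of radius $D_f/(4\pi q^2)$ tangent to $\RZ$ at $p/q$'') \emph{is} Lemma \ref{lemma-bubblesize}; the paper even flags this by saying ``the following lemma is a part of Main Theorem.'' But the Main Theorem is the paper's goal, not an input, and the way that disk estimate is actually established is through Lemma \ref{lemma_sizebubble}: one constructs the auxiliary torus $\E(f_\omega)$ by glueing a curve $\gamma$ oscillating across the hyperbolic cycles, observes that $\RZ$ cuts $\E(f_\omega)$ into annuli $A_c$ of modulus $\pi/|\log\rho_c|$ winding with combinatorial rotation number $-p/q$, and applies a length--area estimate (Lemma \ref{lem-estimate-annuli}) to get $\bar\tau_f(\omega)$ inside the disk of radius $R_\omega=1/\bigl(2\pi q\sum_{x\in\Per(f_\omega)}|\log\rho_x|^{-1}\bigr)$; one then uses the multiplier bound $|\log\rho_x|\le D_f$ (Lemma \ref{th_as-Denjoy}) plus the fact that a hyperbolic map with rotation number $p/q$ has at least $2q$ periodic points to reach $R_\omega\le D_f/(4\pi q^2)$. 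None of this is in your proposal; you have in effect assumed the conclusion and re-derived it. To fix the argument, you would need to reconstruct (or at least cite and summarize) the chain Lemma \ref{lem-estimate-annuli} $\Rightarrow$ Lemma \ref{lemma_sizebubble} $\Rightarrow$ estimate \eqref{eq:Bubblesize}, as the paper does.

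Your second observation---that $D_f=\int_{\RZ}|f''/f'|$ vanishes for rotations and tends to $0$ when $f$ approaches a rotation, so $C=D_f/(4\pi)\to 0$---is correct and matches what the paper (implicitly) uses. Your caveat that ``$C^1$-close'' is really a $C^2$-type hypothesis is also a fair criticism of the lemma's phrasing: $D_f$ depends on $f''$, and $C^1$-closeness alone does not force $D_f$ to be small.
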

 This implies that when $f$ is close to a rotation, different bubbles do not intersect (item (\ref{it:inter})).

  The question on the shape of bubbles (item (\ref{it:shape})) is still open, however our results clarify the shape of bubbles near their endpoints. Let us introduce the following classification:

  \begin{definition*}
If all maps $f_{\omega}, \omega \in (\omega_0,\omega_0+\eps]$ are hyperbolic, and $f_{\omega_0}$ is not, then $\omega_0$ is called \emph{a (left) endpoint of a bubble}. In this case, $\bar \tau_f(\omega) \to \rot (f_{\omega_0})$ as $\omega \to \omega_0, \omega>\omega_0$, due to the continuity of $\bar \tau_f(\omega)$.

If the multiplier of some fixed point of $f_{\omega}$ tends to one as $\omega \to \omega_0, \omega>\omega_0$, then $\omega_0$ is called \emph{a real (left) endpoint} of a bubble. For example, this happens if some parabolic cycle of $f_{\omega_0}$ bifurcates into real hyperbolic cycles as $\omega$ increases.

If the multipliers of  fixed points of $f_{\omega}$ do not tend to one as $\omega \to \omega_0, \omega>\omega_0$, then $\omega_0$ is called \emph{a complex (left) endpoint} of a bubble. This means that all parabolic cycles of $f_{\omega_0}$ bifurcate into complex conjugate cycles as $\omega$ increases. Note that in this case, $f_{\omega_0}$ must have other hyperbolic cycles, otherwise $f_{\omega}, \omega \in (\omega_0,\omega_0+\eps]$ cannot be hyperbolic.

In an analogous way, we introduce the notion of right endpoints of bubbles.
\end{definition*}

\begin{lemma}(Real endpoints)
\label{lemma-realendpoints}
 If $\omega_0$ is a real endpoint of the bubble, $\rot (f_{\omega_0})=p/q$, then the curve $\bar \tau_f(\omega), \, \omega \to \omega_0, \omega>\omega_0$, tends to $p/q \in \RZ$ from above: enters any horocycle at the point $p/q$, see Fig. \ref{fig-realcomplex} (a).
\end{lemma}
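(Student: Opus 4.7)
The plan is to reduce the lemma to showing tangential approach and then use an asymptotic formula for $\bar\tau_f$ at hyperbolic maps to make the tangentiality explicit.

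By definition of a real endpoint and continuity of cycles and their multipliers in $\omega$, the map $f_{\omega_0}$ has a parabolic cycle whose period divides $q$. The parabolic case of the Main Theorem then yields $\bar\tau_f(\omega_0)=p/q$, and the continuity of $\bar\tau_f$ on $\HZbar$ gives $\bar\tau_f(\omega)\to p/q$ as $\omega\to\omega_0^+$. The substantive content of the lemma is the direction of approach. Let $\lambda(\omega)\in(0,1)$ denote the real multiplier of the attracting $q$-periodic cycle of $f_\omega$ born from the parabolic cycle of $f_{\omega_0}$, so $\lambda(\omega)\to 1^-$.

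I would then apply an asymptotic of the form
\[
\bar\tau_f(\omega)=\frac{p}{q}+\frac{\log\lambda(\omega)}{2\pi\i\,q^2}\,\bigl(1+o(1)\bigr)\qquad\text{as }\omega\to\omega_0^+,
\]
which should follow from, or slightly refine, the hyperbolic-case analysis of Goncharuk \cite{NG} (after \cite{YuI_M}). The underlying picture is Koenigs: near the attracting cycle, $f_\omega^{\circ q}$ is locally conjugate to $z\mapsto\lambda(\omega)z$, and the quotient of a punctured neighborhood of $0$ under this linear action is the torus $\bbC/(\bbZ\cdot 2\pi\i+\bbZ\log\lambda(\omega))$; after rescaling by $(2\pi\i)^{-1}$ and accounting for the $q$-fold covering structure and the rotation $p/q$, one recovers the displayed leading term. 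Because $\lambda(\omega)>0$ is real, $\log\lambda(\omega)\in\bbR$, so this leading term is purely imaginary with positive imaginary part. Writing $\bar\tau_f(\omega)-p/q=a(\omega)+\i b(\omega)$, we obtain $a(\omega)=o(b(\omega))$ and $b(\omega)\to 0^+$, hence
\[
\frac{|\bar\tau_f(\omega)-p/q|^2}{\Im \bar\tau_f(\omega)}=\frac{a(\omega)^2}{b(\omega)}+b(\omega)\longrightarrow 0.
\]
Since the horocycle at $p/q$ of radius $r$ bounds the disk $\{z\in\HZ:|z-p/q|^2<2r\,\Im z\}$, this is exactly the statement that $\bar\tau_f(\omega)$ eventually enters every horocycle at $p/q$.

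The main obstacle will be establishing the asymptotic in the form above, with leading term genuinely purely imaginary: if \cite{NG} supplies only the modulus estimate $|\bar\tau_f(\omega)-p/q|=O(1/q^2)$ without identifying a direction, I would have to build a uniformisation of $E(f_\omega)$ by combining Koenigs linearisations at the attracting cycle (or at all cycles, when several parabolic cycles of $f_{\omega_0}$ bifurcate simultaneously) with a quasi-conformal interpolation across the ``parabolic tubes'' connecting attractor to repeller along the real dynamics, and show that the distortion of the interpolation vanishes as $\lambda(\omega)\to 1$. Because the distortion enters only in the $o(1)$ factor, the tangentiality is preserved.
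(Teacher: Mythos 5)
Your argument hinges entirely on the claimed asymptotic
\[
\bar\tau_f(\omega)=\frac{p}{q}+\frac{\log\lambda(\omega)}{2\pi\i\,q^2}\bigl(1+o(1)\bigr),
\]
and you acknowledge yourself that it is not supplied by \cite{NG} or \cite{YuI_M}; the sketch you give (Koenigs linearisation, rescaling, a quasiconformal interpolation across the parabolic tubes with distortion $\to 0$) is a plan, not a proof, and it is precisely where the work would have to be done. Moreover, the formula as written is almost certainly wrong in detail: when a parabolic cycle bifurcates into real hyperbolic cycles, \emph{two} cycles are born (an attractor with multiplier $\lambda(\omega)\to 1^-$ and a repeller with multiplier $\mu(\omega)\to 1^+$), and both contribute annuli of diverging modulus to the torus, alongside bounded-modulus annuli from the persisting hyperbolic cycles of $f_{\omega_0}$. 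A genuine asymptotic would have to account for all of these, and would require matching upper and lower bounds on the modulus of $\E(f_\omega)$ — a substantially harder task than the lemma requires, and a stronger conclusion (actual tangency $\Re(\bar\tau_f-p/q)=o(\Im(\bar\tau_f-p/q))$) than ``enters every horocycle.''

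The paper's own proof is a one-line consequence of Lemma \ref{lemma_sizebubble}: that lemma confines $\bar\tau_f(\omega)$ to a horocyclic disk at $p/q$ of radius $R_\omega=1/\bigl(2\pi q\sum_{x\in\Per(f_\omega)}1/|\log\rho_x|\bigr)$, and at a real endpoint at least one multiplier $\rho_x\to 1$, so the sum blows up, $R_\omega\to 0$, and $\bar\tau_f(\omega)$ eventually enters every horocycle. No asymptotic is needed — the upper bound on $R_\omega$ suffices, because the lemma's conclusion is the horocycle condition $|\bar\tau_f(\omega)-p/q|^2/\Im\bar\tau_f(\omega)\to 0$ and nothing more. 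A secondary (curable but real) flaw: you invoke continuity of $\bar\tau_f$ on $\HZbar$ to deduce $\bar\tau_f(\omega)\to p/q$, but that continuity is established later in the paper by appealing to Lemma \ref{lemma-realendpoints} in exactly the real-endpoint case, so using it here is circular; your asymptotic would give the convergence directly if it were proven, but as it stands neither step is in place.
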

\begin{lemma}(Complex endpoints)
\label{lemma-complexendpoints}
 If  $\omega_0$ is a complex endpoint of the bubble, $\rot (f_{\omega_0})=p/q$, then the curve $\bar \tau_f(\omega), \, \omega \to \omega_0, \omega>\omega_0$, is located between two horocycles at $p/q$.

 For the left endpoint, this curve is  tangent to the segment $[p/q, p/q+\eps)$, see Fig. \ref{fig-realcomplex} (b).
 For the right endpoint, it will be tangent to the segment $(p/q-\eps, p/q]$.
\end{lemma}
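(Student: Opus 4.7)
\emph{Plan.} The starting point is the analyticity of bubbles (Theorem \ref{th_mine}, following \cite{NG}): near the endpoint $\omega_0$, $\bar\tau_f(\omega)-p/q$ is an analytic function of $\sqrt{\omega-\omega_0}$, and its leading behaviour is controlled by the multiplier $\mu(\omega)$ of the cycle of $f_\omega^q$ that continues one parabolic cycle of $f_{\omega_0}^q$ across the bifurcation. Concretely one has
\[
 \bar\tau_f(\omega)\;=\;\frac{p}{q}\;+\;c_0\,\log\mu(\omega)\;+\;h(\omega),
\]
where $c_0$ is a non-zero \emph{purely imaginary} constant (a multiple of $1/(2\pi q^2\i)$, consistent with the radius bound of the Main Theorem) and $h$ is analytic near $\omega_0$ with $h(\omega_0)=0$. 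All the remaining work is a local Puiseux expansion.

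\emph{Bifurcation analysis.} Pick local coordinates placing the parabolic fixed point of $f_{\omega_0}^q$ at $z=0$, so that $f_{\omega_0}^q(z)=z+az^2+O(z^3)$ with $a\neq 0$ by non-degeneracy of the parabolic. Unfolding gives
\[
 f_\omega^q(z)-z\;=\;\alpha(\omega-\omega_0)+az^2+O\bigl(z^3,(\omega-\omega_0)z,(\omega-\omega_0)^2\bigr),
\]
with $\alpha=\sum_{j=0}^{q-1}\prod_{k=j+1}^{q-1}f'\bigl(f_{\omega_0}^k(0)\bigr)>0$ (chain rule plus parabolicity $\prod_k f'=1$). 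The two fixed points of $f_\omega^q$ branching from $0$ have multipliers $\mu_\pm(\omega)=1\pm 2\sqrt{-\alpha a(\omega-\omega_0)}+O(\omega-\omega_0)$. The hypothesis ``$\omega_0$ is a complex endpoint'' is precisely that $-\alpha a(\omega-\omega_0)<0$ for $\omega>\omega_0$, so the two fixed points are complex conjugate; on the appropriate branch,
\[
 \mu(\omega)\;=\;1+\i\kappa\sqrt{\omega-\omega_0}+O(\omega-\omega_0),\qquad\kappa\in\bbR\setminus\{0\}.
\]

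\emph{Extracting the asymptotic.} Expanding the logarithm, $\log\mu(\omega)=\i\kappa\sqrt{\omega-\omega_0}+\tfrac12\kappa^2(\omega-\omega_0)+O((\omega-\omega_0)^{3/2})$. Substituting into the Step 1 formula and using $c_0\in\i\bbR$,
\[
 \bar\tau_f(\omega)-\frac{p}{q}\;=\;A\sqrt{\omega-\omega_0}\;+\;\i B(\omega-\omega_0)\;+\;O\bigl((\omega-\omega_0)^{3/2}\bigr),
\]
with $A=-\i c_0\kappa\in\bbR\setminus\{0\}$ and $B\in\bbR$; positivity $B>0$ is forced by $\bar\tau_f(\omega)\in\HZ$ on $(\omega_0,\omega_0+\eps]$ (Main Theorem, hyperbolic case). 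Writing $x=\Re(\bar\tau_f-p/q)$ and $y=\Im\bar\tau_f$, this reads $x=A\sqrt{\omega-\omega_0}+O(\omega-\omega_0)$ and $y=B(\omega-\omega_0)+O((\omega-\omega_0)^{3/2})$, so $y\sim (B/A^2)x^2$. The curve therefore lies between the two horocycles at $p/q$ of radii $A^2/(2B)\mp\delta$, and it has horizontal tangency at $p/q$. Choosing the branch with $A>0$ gives tangency to $[p/q,p/q+\eps)$, as required; the right-endpoint case is identical with $\omega<\omega_0$ in place of $\omega>\omega_0$.

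\emph{Main obstacle.} The non-routine ingredient is the structural formula $\bar\tau_f=p/q+c_0\log\mu+h$ with coefficient $c_0$ \emph{purely imaginary}: this is the substance of Theorem \ref{th_mine} together with the hyperbolic-case analysis in \cite{NG}. Everything after is the above Puiseux calculation. Conceptually, the dichotomy between real and complex endpoints reduces to the dichotomy between real and purely imaginary values of $\kappa$: imaginary $\kappa$ together with imaginary $c_0$ makes the $\sqrt{\omega-\omega_0}$-coefficient of $\bar\tau_f-p/q$ real and the linear coefficient imaginary, which is exactly horocycle-rate horizontal tangency, while real $\kappa$ reverses these roles and gives vertical tangency as in Lemma \ref{lemma-realendpoints}.
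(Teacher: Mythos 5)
Your argument hinges entirely on the ``structural formula''
\[
\bar\tau_f(\omega)=\frac pq+c_0\log\mu(\omega)+h(\omega),\qquad c_0\in\i\bbR\setminus\{0\},\ h\text{ analytic},\ h(\omega_0)=0,
\]
which you describe as ``the substance of Theorem \ref{th_mine} together with the hyperbolic-case analysis in \cite{NG}.'' This is a genuine gap: Theorem \ref{th_mine} asserts only that $\tau_f$ extends analytically across hyperbolic real parameters; it says nothing about the behaviour at the endpoint $\omega_0$, which is precisely the boundary of the domain of analyticity, and \cite{NG} does not provide such a formula either. The formula would simultaneously give (i) that $\bar\tau_f-p/q$ is analytic in $\sqrt{\omega-\omega_0}$, and (ii) that the coefficient $c_0$ multiplying $\log\mu$ is \emph{purely imaginary} — both are substantially stronger than anything in the paper, and (ii) in particular encodes exactly the real-versus-complex-endpoint dichotomy that the lemma is about, so asserting it is assuming what needs to be shown. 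Once you have this formula the Puiseux computation, the positivity of $B$, and the ``choice of branch'' for $A$ are routine, but none of these steps can be run without it, and the sign of $A$ (which decides whether the tangency is to $[p/q,p/q+\eps)$ or to the other side) also has to be \emph{derived}, not chosen.

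The paper's route is genuinely different and coarser. Passing to $f^{\circ q}$ via Lemma \ref{lem_f^q}, it compares $\E(f_\omega^{\circ q})$ with an explicit torus $E'$ of modulus $-1/\sigma_\omega$, where $\sigma_\omega=\sum_j(\tilde s_{\omega,j}-\tilde r_{\omega,j})$ is built from linearizing coordinates at the surviving hyperbolic fixed points. Lemma \ref{lemma-qctwist} (proved by a length--area/quasiconformal argument, Lemmas \ref{lem-quasi-dist} and \ref{lemma-disxi}) gives $\distance_\HZ\bigl(q\bar\tau_f(\omega),-1/\sigma_\omega\bigr)\le 5D_f$ \emph{uniformly} in $\omega$. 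Then one shows directly that $\Im\sigma_\omega$ stays bounded (the surviving multipliers $\rho_{\omega,j}$ stay away from $1$) while $\Re\sigma_\omega\to-\infty$ (the linearizing-chart coordinate $\log\phi_{\omega,j}^{-1}(x_j)$ or $\log|\phi_{\omega,j+1}^{-1}(x_j)|$ of the bottleneck diverges to $+\infty$, with sign determined by whether $\alpha_j$ repels or $\alpha_{j+1}$ attracts). Thus $-1/\sigma_\omega\to 0$ between two horocycles, tangent to the positive real direction, and a bounded hyperbolic ball around such a point inherits both properties, giving the lemma \emph{without} any exact expansion of $\bar\tau_f$. If you want to salvage your plan, the task is to actually prove an asymptotic of the form you posited — e.g.\ an Écalle--Voronin/Lavaurs passage-time estimate for the transition map through the degenerating bottleneck combined with a precise dependence of the torus modulus on the gluing data — which is essentially what the $\sigma_\omega$ computation packages, but up to bounded hyperbolic error rather than as an equality.
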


\begin{figure}[h]
\includegraphics[width=0.3\textwidth]{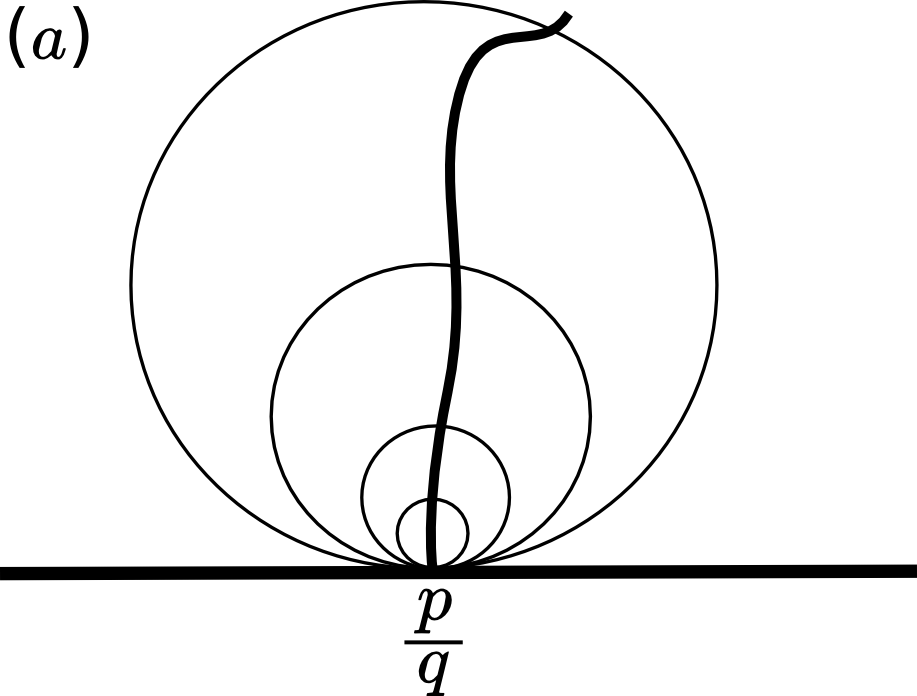}
\hfil
\includegraphics[width=0.3\textwidth]{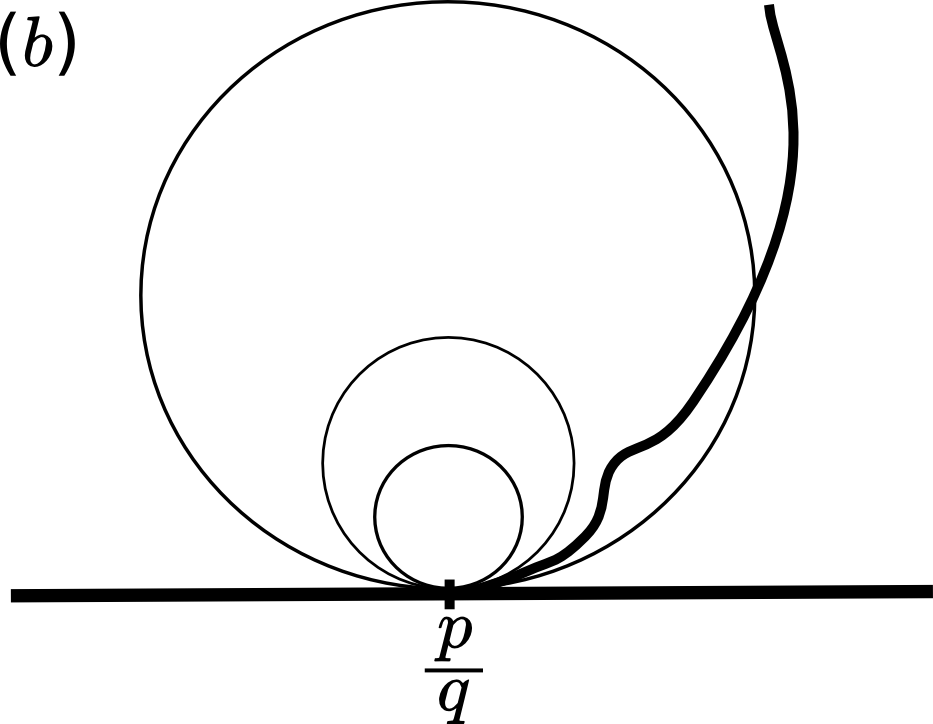}
\caption{The sketch of the curve $\bar \tau_f(\omega)$, $\omega \in [\omega_0, \omega_0+\eps]$, in the case when $\omega_0$ is  (a) real and  (b) complex left endpoint of the bubble; $\rot (f_{\omega_0}) = \frac{p}{q}$.}\label{fig-realcomplex}
\end{figure}

\begin{remark}
\label{remark-leftRight}
 When we pass to  the  diffeomorphism $x\mapsto -f(-x)$, the map $\bar \tau_f$ is conjugated by $z \mapsto -\bar z$. Thus it is sufficient to prove Lemmas \ref{lemma-realendpoints} and \ref{lemma-complexendpoints} only for left endpoints.
\end{remark}

We finish this section by Corollary \ref{coroll-noninj} which disproves the conjecture of item (\ref{it:univ}).

\begin{corollary}
\label{coroll-noninj}
Assume that $x-f(x)$  has two local maxima at points $x_1$ and $x_2$ with  $x_1-f(x_1) \neq x_2-f(x_2)$ (see Fig. \ref{fig-graphs}). Then, $\tau_f$ is not injective.
\end{corollary}

\begin{proof}
\begin{figure}[h]
\includegraphics[width=\textwidth]{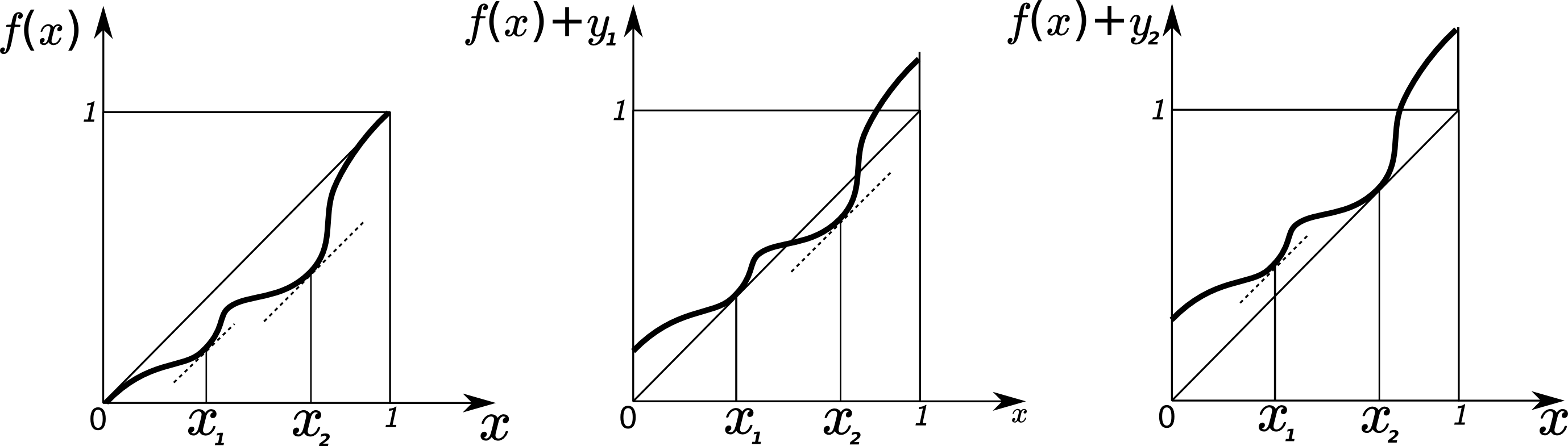}
\caption{Graphs of the functions $f, f+y_1, f+y_2$.}
\label{fig-graphs}
\end{figure}

Let $y_1$ and $y_2$ be the respective values of $x-f(x)$ at $x_1$ and $x_2$.
Suppose that $y_1<y_2$. Then the map $f_\omega$ for $y_1<\omega<y_2$ has zero rotation number, and it has parabolic fixed points for $\omega =y_1$ and $ \omega=y_2$. Note that when $\omega$ increases from $y_1 $ to $y_1+\eps$, the parabolic fixed point disappears ($y_1$ is a complex left endpoint of a bubble), thus due to Lemma \ref{lemma-complexendpoints}, the curve $\omega \mapsto \bar \tau_f(\omega)$
is tangent to $[0, 0+\eps)$. When $\omega<y_2$ tends to $y_2$, the two hyperbolic fixed points merge into a parabolic fixed point ($y_2$ is a real endpoint of a bubble). Thus, according to Lemma \ref{lemma-realendpoints},
the curve $\omega \mapsto \bar \tau_f(\omega)$ enters any horocycle at $0$ as $\omega<y_2$ tends to $y_2$. Fig. \ref{fig-oneBubble} shows the sketch of this bubble. But if $\tau_f$ were injective, the pair of germs of the curve $\bar \tau_f |_{\RZ}$
at $y_1$ and $y_2$ (both passing through $0$) would be oriented clockwise. The contradiction shows that $\tau_f$ is not injective in the upper half-plane.
\end{proof}

\begin{figure}[h]
\includegraphics[width=0.3\textwidth]{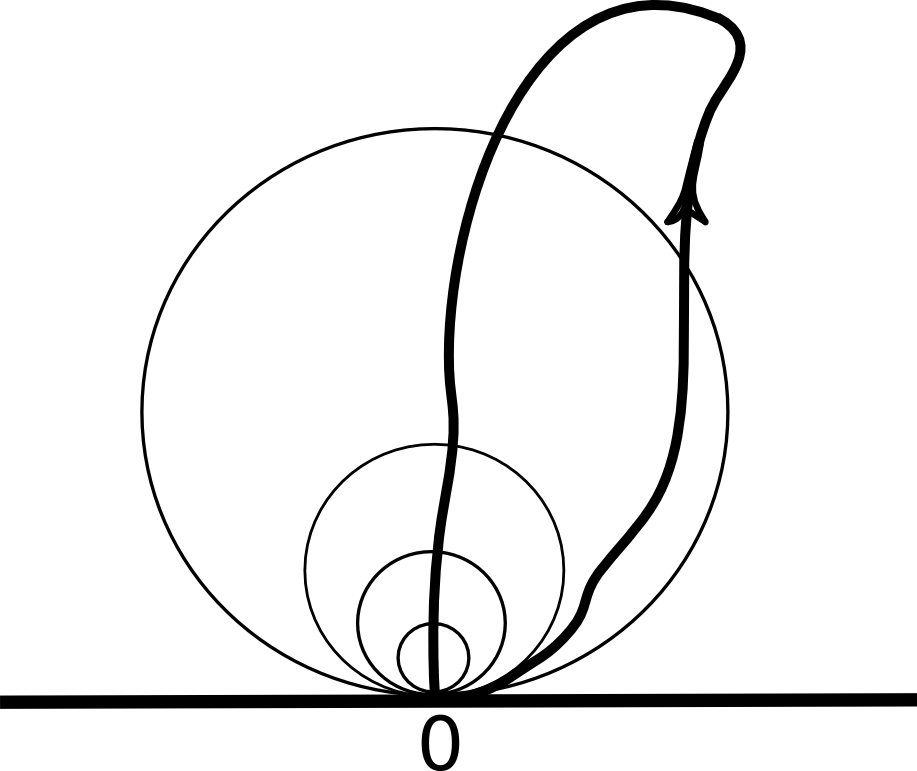}
\caption{The sketch of the curve $\bar \tau_f((y_1, y_2))$.}
\label{fig-oneBubble}
\end{figure}

For the map $f$ whose graph is shown in Fig. \ref{fig-graphs}, the same arguments give some information about the curve $\bar \tau_f((0, y_2))$ --- the bubble bundle  (see item (\ref{it:bundle})). However we cannot choose between numerous possible pictures, see Fig. \ref{fig-bundle} for two of them. The question on the exact shape of a bubble bundle stays open.

\begin{figure}[h]
\includegraphics[width=0.65\textwidth]{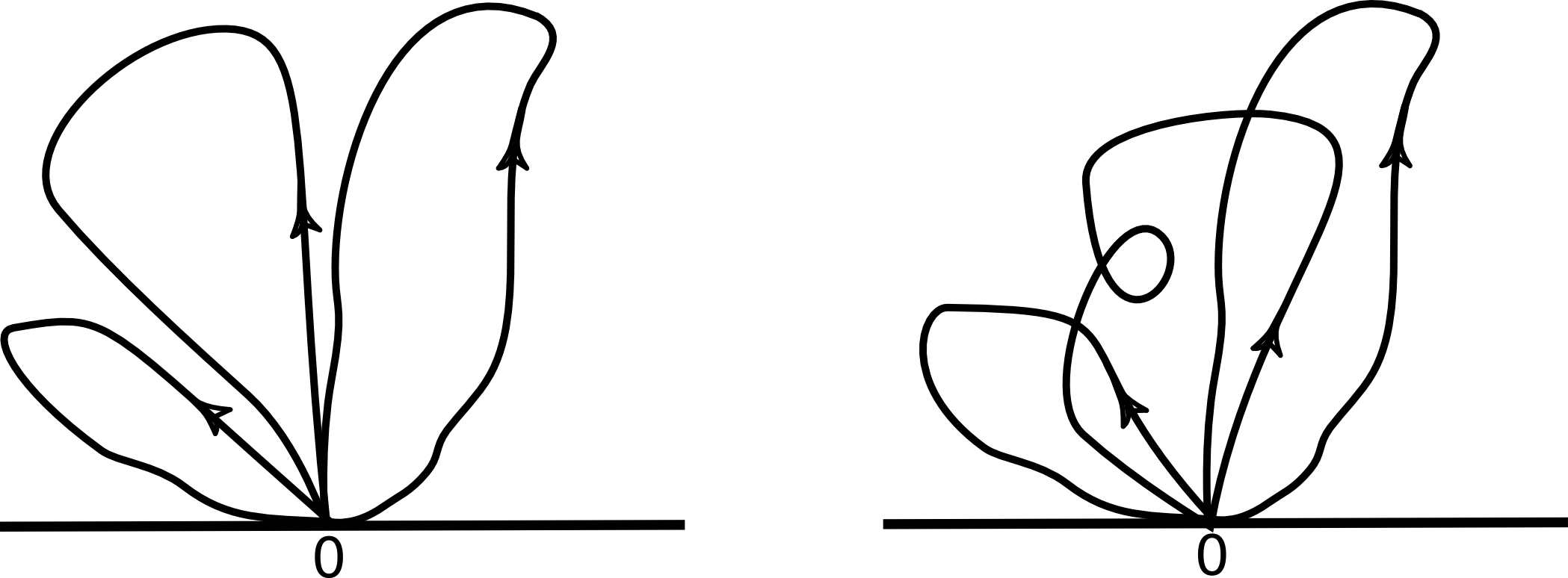}
\caption{The ``bubble bundle'': the possible sketches of the curve $\bar \tau_f((0, y_2))$.}
\label{fig-bundle}
\end{figure}

\section{Strategy of the proof}
\label{sec-proof}

The proof of the Main Theorem goes as follows.

\setcounter{step}{0}
\step
Recall that a number $\theta\in \RZ$ is {\em Diophantine} if there are constants $c>0$ and $\beta>0$ such that for all rational numbers $p/q\in \mathbb Q/\bbZ$, we have
\[ \left|x - \frac{p}{q}\right| > \frac{c}{q^{2+\beta}}.\]

\begin{theorem}[V. Moldavskis \cite {M}]
\label{th_Mold}
If $\omega\in \RZ$ and if $\rot(f_\omega)$ is Diophantine,  then
\[
\lim_{\substack{y \to 0\\ y>0}} \tau_f(\omega+\i y) = \rot(f_\omega).
\]
\end{theorem}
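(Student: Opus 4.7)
The plan is to use the Diophantine hypothesis to linearize $f_\omega$ analytically and then to compare the torus $E(f_{\omega+iy})$ with the torus of a pure rotation via an explicit quasiconformal map whose dilatation is bounded uniformly as $y \to 0^+$. Set $\theta \dfn \rot(f_\omega)$. Since $\theta$ is Diophantine and $f_\omega$ is analytic, Yoccoz's theorem produces a real-analytic conjugacy $h\colon \RZ \to \RZ$ with $h \circ f_\omega = R_\theta \circ h$, where $R_\theta(x)\dfn x+\theta$, normalized so that $h(x+1) = h(x)+1$. For the argument below I only need that $h$ is a $C^1$ diffeomorphism of $\RZ$, so even Herman's $C^1$-regularity theorem would suffice.

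For each small $y > 0$ I would define
\[
\Phi(x + it) \dfn h(x) + it, \qquad 0 \le t \le y,
\]
as a smooth map from the strip $\set{0 \le \Im z \le y}$ into $\bbC$. The normalization makes $\Phi$ commute with $z \mapsto z+1$, and the conjugacy identity gives, for $z \in \RZ$,
\[
\Phi\bigl(f_\omega(z) + iy\bigr) = h\bigl(f_\omega(z)\bigr) + iy = h(z) + \theta + iy = \Phi(z) + (\theta + iy),
\]
so $\Phi$ respects both the boundary identification defining the source torus $E(f_{\omega+iy})$ and the lattice identification defining the target $\cal E_{\theta+iy} \dfn \bbC/(\bbZ + (\theta+iy)\bbZ)$. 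Thus $\Phi$ descends to a marking-preserving homeomorphism $E(f_{\omega+iy}) \to \cal E_{\theta+iy}$. A direct computation in the real coordinates $x = \Re z$, $t = \Im z$ gives the Beltrami coefficient
\[
\mu_\Phi(z) = \frac{\bar\partial\Phi}{\partial\Phi}(z) = \frac{h'(\Re z) - 1}{h'(\Re z) + 1},
\]
which depends only on $\Re z$. Because $h$ is an orientation-preserving $C^1$ diffeomorphism of the compact circle, $h'$ is continuous, positive and bounded, so there is a constant $k<1$ independent of $y$ with $\|\mu_\Phi\|_\infty \le k$. Consequently $\Phi$ is $K$-quasiconformal with $K \dfn (1+k)/(1-k)$ independent of $y$.

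To conclude, I would invoke the identification of the Teichmüller space of marked complex tori with $\HZ$ carrying the hyperbolic metric; the existence of a marking-preserving $K$-quasiconformal map yields $d_{\mathrm{hyp}}\bigl(\tau_f(\omega+iy),\, \theta + iy\bigr) \le \tfrac{1}{2}\log K$. Because the hyperbolic ball of bounded radius centred at $\theta + iy \in \HZ$ is a Euclidean disk of diameter $O(y)$, this forces $\tau_f(\omega+iy) \to \theta$ as $y \to 0^+$. The substantive step is the analytic linearization, which is exactly where the Diophantine hypothesis is used, and removing it is precisely the point of the paper's Main Theorem. Once linearization is in hand, the remainder is a soft quasiconformal estimate whose punchline is that a bounded-in-$y$ dilatation is amplified by the hyperbolic geometry of $\HZ$ into genuine Euclidean convergence, since the target depth $\Im(\theta + iy) = y$ itself tends to zero.
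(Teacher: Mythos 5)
Your proposal is correct and follows essentially the same route as the paper: use Yoccoz's analytic linearization to build a fiberwise-affine quasiconformal map between $E(f_{\omega+\i y})$ and $\mathcal E_{\theta+\i y}$ with dilatation bounded uniformly in $y$, then convert the uniform hyperbolic-distance bound (the paper's Lemma \ref{lem-quasi-dist}) into Euclidean convergence as $y\to 0^+$. The only cosmetic differences are the direction in which you write the conjugacy (your $h$ is the inverse of the paper's $\phi$) and that you spell out the Beltrami coefficient where the paper just records $K=\max\bigl(\|\phi'\|_\infty,\|1/\phi'\|_\infty\bigr)$.
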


 Theorem \ref{th_Mold} was proved by Risler \cite{Ris} as a corollary of a very delicate analog of the Arnold--Hermann theorem. A very short direct proof was obtained by Moldavskis \cite {M}.

\step\label{step_rational}
If $\omega\in \RZ$ and $\rot(f_\omega)$ is rational, then the conclusion of Theorem \ref{th_Mold} is not true. This fact was first proved by Yu. Ilyashenko and V. Moldavkis
\cite{YuI_M}. We do not formulate their result since we will use its later generalized version.

\begin{theorem}[N. Goncharuk \cite{NG}]
\label{th_mine}
If $\omega\in \RZ$, if $\rot(f_\omega)$ is rational and if $f_\omega$ is hyperbolic, then $\tau_f$ extends analytically to a neighborhood of $\omega$.
\end{theorem}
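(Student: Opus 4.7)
The plan is to produce, for $\omega$ in a full complex disk $V \subset \CZ$ around $\omega_0$, a holomorphic family of marked complex tori $T_\omega$ that agrees with $E(f_\omega)$ for $\omega \in V \cap \HZ$; the Teichm\"uller modulus $\tau(T_\omega)$ is then automatically holomorphic on $V$, giving the desired analytic extension of $\tau_f$ across $\omega_0$.

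Write $\rot(f_{\omega_0})=p/q$ and set $g_\omega(z) \dfn f_\omega^{\circ q}(z)-p$, using an appropriate lift. Hyperbolicity of $f_{\omega_0}$ means that every periodic point of $f_{\omega_0}$ is a simple zero of $g_\omega(z)-z$ at $\omega=\omega_0$ with $g'_{\omega_0}\neq 1$, so by the implicit function theorem each such fixed point extends to a holomorphic function $a(\omega)$ on a disk $V$ about $\omega_0$, and its multiplier $\lambda_a(\omega) \dfn g'_\omega(a(\omega))$ is also holomorphic on $V$. Fix an adjacent pair $(a(\omega),b(\omega))$ with $a$ attracting and $b$ repelling. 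Koenigs' theorem supplies holomorphically varying linearizing coordinates near each, in which $g_\omega$ acts by multiplication by $\lambda_a(\omega)$ or $\lambda_b(\omega)$. Joining $a(\omega)$ to $b(\omega)$ by a holomorphically chosen arc $\gamma_\omega$ and identifying $\gamma_\omega$ with $g_\omega(\gamma_\omega)$ produces a fundamental annulus for the local action of $g_\omega$; together with the translation $z\mapsto z+1$ on $\CZ$, this assembles into a complex torus $T_\omega$, whose modulus is holomorphic in $\omega \in V$ by holomorphic dependence of all the data. A direct geometric check then shows that for $\omega \in V\cap \HZ$, $T_\omega$ coincides with $E(f_\omega)$ with the same marking by $\RZ$, since both are obtained as quotients of the same dynamical data in $\CZ$.

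The principal obstacle I expect is the construction of the arc $\gamma_\omega$, so that it depends holomorphically on $\omega$ and $\gamma_\omega$ together with $g_\omega(\gamma_\omega)$ bounds a genuine fundamental domain. For $\omega_0$ real one may take $\gamma_{\omega_0}\subset \bbR$ between $a(\omega_0)$ and $b(\omega_0)$, but as $\omega$ leaves $\RZ$ the endpoints $a(\omega),b(\omega)$ drift off $\bbR$ and the arc must be deformed; the cleanest route is to produce $\gamma_\omega$ by a holomorphic motion of $\gamma_{\omega_0}$, guaranteed by the $\lambda$-lemma applied to the holomorphically moving pair $(a(\omega),b(\omega))$, and then to verify that $g_\omega(\gamma_\omega)\cap \gamma_\omega=\{a(\omega),b(\omega)\}$ using uniform hyperbolicity of $g_\omega$ on $V$. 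Once the geometry of the fundamental domain is under control, holomorphic dependence of the Koenigs coordinates and of the transit map immediately give holomorphic dependence of $\tau(T_\omega)$, completing the extension.
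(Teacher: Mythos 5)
Your overall strategy --- build a holomorphic family of marked tori agreeing with $E(f_\omega)$ for $\omega\in\HZ$, then read off holomorphy of the modulus --- is exactly the paper's strategy. But the construction of $T_\omega$ as you describe it does not work. You take the arc $\gamma_\omega$ with \emph{endpoints at} the fixed points $a(\omega)$, $b(\omega)$ of $g_\omega=f_\omega^{\circ q}$. Since $g_\omega$ fixes those endpoints, $\gamma_\omega$ and $g_\omega(\gamma_\omega)$ share endpoints and bound a bigon, not an annulus; the resulting quotient is a cylinder with two ends, and ``the translation $z\mapsto z+1$ on $\CZ$'' is the identity, so nothing closes it up into a torus. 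The paper's curve $\gamma$ deliberately avoids the periodic points: it is a simple closed curve homotopic to $\RZ$ made of $2mq$ arcs, each the $\phi_j$-image of a circle arc in linearizing coordinates passing through a point $\eps_j$ off the real axis, with endpoints $x_j\in(\alpha_j,\alpha_{j+1})$ strictly between consecutive periodic points. Precisely because $\gamma$ misses $\Per(f)$ and stays on the correct side of each $\alpha_j$, $f(\gamma)$ is \emph{disjoint} from $\gamma$ (lying strictly above it in $\CZ$), and the two bound a genuine essential annulus whose quotient by $f_\omega$ is a torus. Also note the construction uses $f_\omega$ directly, not $f_\omega^{\circ q}$: otherwise the homotopy class of $\RZ$ is marked incorrectly.

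Two further issues. First, you do not need the $\lambda$-lemma or a holomorphic motion of $\gamma_\omega$: the paper keeps $\gamma$ \emph{fixed} and only $f_\omega(\gamma)=f(\gamma)+\omega$ moves, by translation, which remains disjoint from $\gamma$ on a full complex neighborhood of $0$; the holomorphy of $\omega\mapsto\bar\tau_f(\omega)$ then follows from Risler's proposition. Even if your construction of $T_\omega$ were correct, moving the boundary curve by a holomorphic motion does not \emph{a priori} give holomorphic dependence of the torus modulus; you would need to invoke the measurable Riemann mapping theorem and actually prove it. Second, your final sentence --- that $T_\omega$ coincides with $E(f_\omega)$ on $\HZ$ ``since both are obtained as quotients of the same dynamical data'' --- is not a proof; the paper explicitly refers to \cite{NG} for this comparison, which involves a genuine degeneration argument (taking the heights $\eps_j$ to $0$).
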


In the following, we shall denote by $\bar\tau_f(\omega)$ this extension of $\tau_f$ at $\omega$. At this stage, $\bar \tau_f(\omega)$ is defined on a countable number of real segments. However, in what follows we will define $\bar \tau_f$ on the whole $\RZ$.

\step
Recall that $\theta\in \RZ$ is {\em Liouville} if it is irrational but not Diophantine. We use the following result of Tsujii.

\begin{theorem}[M. Tsujii \cite{Tsujii}]
\label{th_Tsujii}
The set of $\omega\in \RZ$ such that $\rot(f_\omega)$ is Liouville has zero Lebesgue measure.
\end{theorem}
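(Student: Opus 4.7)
My plan is to reduce Tsujii's theorem to a Borel-Cantelli-type estimate on the preimages of small neighbourhoods of rationals under the rotation-number function $\rho(\omega):=\rot(f_\omega)$. The function $\rho:\RZ\to\RZ$ is continuous and non-decreasing, and for each rational $p/q$ the preimage $\rho^{-1}(p/q)$ is a closed interval (possibly a single point) --- the trace of the Arnold tongue through $p/q$. Since the set $L$ of Liouville numbers is disjoint from $\mathbb Q/\bbZ$, the preimage $\rho^{-1}(L)$ lies in the complement of all these plateaus, so only the transition regions near the plateau edges need to be controlled.

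Set $L_n:=\bigcup_{p,q}(p/q-q^{-n},\,p/q+q^{-n})\subset\RZ$; then $L\subseteq L_n$ for every $n\ge 1$, and $|L_n|\to 0$. Writing
\[
\rho^{-1}(L_n)\setminus\rho^{-1}(\mathbb Q/\bbZ)\;\subseteq\;\bigcup_{p,q}A_{p,q,n},\qquad A_{p,q,n}:=\rho^{-1}\bigl((p/q-q^{-n},p/q+q^{-n})\bigr)\setminus\rho^{-1}(p/q),
\]
the goal is to prove an estimate of the form $|A_{p,q,n}|\lesssim q^{-2n}$. Such a bound should come from a Hölder estimate $|\rho(\omega)-p/q|\gtrsim|\omega-\omega_0|^{1/2}$ as $\omega$ leaves a plateau with edge $\omega_0$; this is the content of the standard normal form at a non-degenerate parabolic cycle of $f_{\omega_0}$. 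Summing then gives $|\rho^{-1}(L_n)|\lesssim\sum_{q\ge 2}q\cdot q^{-2n}\to 0$ as $n\to\infty$, hence $|\rho^{-1}(L)|=0$.

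The main obstacle is making this Hölder estimate uniform in $p/q$: the implied constants must be bounded as $q\to\infty$, and one must handle degenerate parabolic bifurcations (where the exponent might a priori be smaller than $1/2$) as well as rationals whose plateau has degenerated to a single point. To secure the uniform estimate I would combine a quantitative distortion bound for the iterates $f_\omega^{\circ q}$, exploiting the analyticity of $f$ via Koebe-type control on a complex extension, with the Herman-Yoccoz smoothness of the conjugacy at Diophantine parameters in order to propagate control on $\rho$ between plateaus. I expect this uniform quantitative control --- especially in the degenerate parabolic cases and in the limit $q\to\infty$ --- to be the principal technical difficulty and to be where the analyticity of $f$ plays an essential role.
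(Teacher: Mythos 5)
Your Borel--Cantelli skeleton is the right shape, but the key estimate you propose to prove is both harder than necessary and not the one the paper uses. First, the strength: you ask for $|A_{p,q,n}|\lesssim q^{-2n}$ via a square-root lower bound $|\rho(\omega)-p/q|\gtrsim|\omega-\omega_0|^{1/2}$, but a \emph{linear} lower bound $|\rho(\omega)-p/q|\ge c\,\distance(\omega,\rho^{-1}(p/q))$ with $c$ depending only on $f$ already gives $|A_{p,q,n}|\lesssim q^{-n}$, and then $\sum_q q\cdot q^{-n}\to 0$ as $n\to\infty$ just as well. Aiming at the parabolic normal form exponent $1/2$ creates exactly the uniformity and degeneracy problems you flag, and the tools you suggest (Koebe on a complex extension, Herman--Yoccoz regularity of the conjugacy) would not plausibly control the bifurcation constant uniformly in $q$; this part of your plan is a genuine gap, not a routine step. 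Second, the route: the paper proves precisely such a uniform Lipschitz bound by a completely elementary argument that has nothing to do with local bifurcation analysis. Proposition~\ref{prop_Tsujii} states that if $\rot(f)=\theta$ is irrational and $p/q$ is a continued-fraction convergent of $\theta$, then there is $\omega$ with $|\omega|<e^{D_f}|\theta-p/q|$ and $\rot(f_\omega)=p/q$; equivalently $|\rho(\omega_1)-p/q|\ge e^{-D_f}\,\distance(\omega_1,\rho^{-1}(p/q))$ whenever $p/q$ is a convergent of $\rho(\omega_1)$. Its proof uses only the Denjoy topological conjugacy to the rotation, an averaging trick (Lemma~\ref{lemma_tonelli}, via Tonelli) to pick a dynamically well-distributed orbit segment, Denjoy's distortion Lemma~\ref{denjoy1}, and Cauchy--Schwarz; this works for $C^2$ diffeomorphisms, so analyticity plays no role here, contrary to your expectation. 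The restriction to convergents is harmless for you: once $n\ge 2$, $|\theta-p/q|<q^{-n}<1/(2q^2)$ already forces $p/q$ to be a convergent of $\theta$, so the estimate applies to every relevant $A_{p,q,n}$. Finally, the paper parameterizes by $\beta>0$ rather than your $n$, writing the Liouville set as $\bigcap_\beta\limsup_q S_{\beta,q}$ with $S_{\beta,q}$ contained in the $e^{D_f}q^{-2-\beta}$-neighborhood of the (at most $q$) plateaus of denominator $q$, giving ${\rm Leb}(S_{\beta,q})\le 2e^{D_f}q^{-1-\beta}$ and hence measure zero by Borel--Cantelli; the two decompositions are equivalent.
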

It implies that almost every $\omega \in \bbR/\bbZ$ satisfies assumptions of either Theorem \ref{th_Mold}, or Theorem \ref{th_mine} (note that the set of $\omega$ such that $f_\omega$ has a parabolic cycle is countable, because our family is analytic).

\step
If $f_\omega$ has rational rotation number, we usually denote it by  $p/q$. We denote by $\Per(f_\omega)$ the set of periodic points of $f_\omega\colon\RZ\to \RZ$. For $x\in \Per(f_\omega)$, we denote by $\rho_x$ the multiplier of $x$ as a fixed point of $f^{\circ q}$. Our contribution starts with the following result. It is an analog of the Yoccoz Inequality which bounds the multiplier of a fixed point of a polynomial in terms of its combinatorial rotation number \cite{H}.

\begin{lemma}\label{lemma_sizebubble}
Assume that $f_\omega$ is a hyperbolic map with rational rotation number $p/q$. Then, $\bar \tau_f(\omega)$ belongs to the disk tangent to $\RZ$ at $p/q$ with radius
\begin{equation}
\label{eq:Romega}
R_\omega\dfn \frac{1}{2\displaystyle \pi q\cdot \sum_{x\in \Per(f_\omega)} \frac{1}{|\log \rho_x|}}.
\end{equation}
In addition,
\begin{equation}
\label{eq:Bubblesize}R_\omega\leq D_f/(4\pi q^2).
\end{equation}

\end{lemma}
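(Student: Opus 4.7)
The plan is to reformulate the claim as a Yoccoz-type inequality on the natural $q$-fold covering torus, and then apply Grötzsch sub-additivity of moduli to a family of cylinders coming from the Koenigs linearization at each periodic point. Let $\tau = \bar\tau_f(\omega)$, which is well-defined in $\HZ$ by Theorem \ref{th_mine}. I would consider the index-$q$ sublattice $\bbZ + (q\tau-p)\bbZ$ of $\bbZ+\tau\bbZ$; the corresponding $q$-fold cover of $\cal E_\tau$ is $T' \dfn \cal E_{\tilde\tau}$ with $\tilde\tau \dfn q\tau - p$. Dynamically $T'$ is the torus obtained by gluing an annulus between $\RZ$ and $\RZ + q\omega$ via $f_\omega^q - p$, so the homotopy class $(0,1)$ in $T'$ (the class of $\tilde\tau$) corresponds to a loop realizing one application of $f_\omega^q-p$. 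An elementary calculation shows that $\tau$ lies in the closed disk of radius $R_\omega$ tangent to $\RZ$ at $p/q$ if and only if
\[
\Im(-1/\tilde\tau) = \Im\tilde\tau/|\tilde\tau|^2 \ge \frac{1}{2qR_\omega} = \pi \sum_{x\in\Per(f_\omega)}\frac{1}{|\log\rho_x|}.
\]

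Since $\Im(-1/\tilde\tau)$ is precisely the extremal modulus of essential annuli in $T'$ with core in the class $(0,1)$, by the Grötzsch inequality it suffices to produce pairwise disjoint embedded annuli $A_x \subset T'$, indexed by periodic points $x$, each in class $(0,1)$ with modulus at least $\pi/|\log\rho_x|$. For each $x$, I would invoke the Koenigs linearization $\phi_x$ of $f_\omega^q$ at $x$: in the coordinate $\zeta = \phi_x(z)$, the map becomes $\zeta \mapsto \rho_x\zeta$ with $\rho_x>0$, $\rho_x\neq 1$. Passing to $w=\log\zeta$, the upper half of a punctured neighborhood of $0$ becomes the strip $\{0<\Im w<\pi\}$, on which the dynamics reduces to the translation $w\mapsto w+\log\rho_x$. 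The quotient is a cylinder of modulus $\pi/|\log\rho_x|$, and a horizontal line in the strip descends, via $\phi_x^{-1}$ and the quotient defining $T'$, to a loop realizing one application of $f_\omega^q-p$, hence to a curve of class $(0,1)$ in $T'$. Shrinking the linearization domains makes the $A_x$'s pairwise disjoint, and Grötzsch gives the first part of the lemma.

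For the estimate $R_\omega \le D_f/(4\pi q^2)$, I would bound $\sum_x|\log\rho_x|$ via the total variation of $\log(f_\omega^q)'$. The chain rule gives $(\log(f_\omega^q)')'(x) = \sum_{i=0}^{q-1}\frac{f''(f_\omega^i x)}{f'(f_\omega^i x)}(f_\omega^i)'(x)$, and a change of variable $y=f_\omega^i(x)$ (each $f_\omega^i$ is an orientation-preserving circle diffeomorphism for real $\omega$) yields $\int_{\RZ}|(\log(f_\omega^q)')'|\dx x \le qD_f$. Since $f_\omega$ is hyperbolic with $\rot(f_\omega)=p/q$, a standard graph-position argument shows that periodic points alternate between attracting and repelling around $\RZ$; hence consecutive periodic points $x_i, x_{i+1}$ have $\log\rho_{x_i}$ and $\log\rho_{x_{i+1}}$ of opposite signs, and the variation on $[x_i,x_{i+1}]$ is at least $|\log\rho_{x_i}|+|\log\rho_{x_{i+1}}|$. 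Summing cyclically each periodic point is counted twice, giving $2\sum_x|\log\rho_x| \le qD_f$. Cauchy--Schwarz together with $N = |\Per(f_\omega)| \ge q$ then yields
\[
\sum_x\frac{1}{|\log\rho_x|} \ge \frac{N^2}{\sum_x|\log\rho_x|} \ge \frac{2q}{D_f},
\]
which inserted into the defining formula for $R_\omega$ gives $R_\omega \le D_f/(4\pi q^2)$.

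The main technical point to nail down is verifying that the Koenigs cylinders embed in $T'$ with the asserted $(0,1)$ homotopy class. This requires unpacking the quotient construction underlying Theorem \ref{th_mine} --- essentially identifying $T'$ with a quotient of a complex neighborhood of $\RZ$ by the group generated by $z\mapsto z+1$ and $z\mapsto f_\omega^q(z)-p$ --- so that the linearizations $\phi_x$ provide local charts in which horizontal lines in the strip descend to curves in the $\tilde\tau$-class of $T'$. Once this identification is in place, Part~1 is a one-line Grötzsch argument and Part~2 is the elementary total-variation/Cauchy--Schwarz computation above.
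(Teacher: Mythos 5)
Your approach to the first half of the lemma is essentially the paper's. Passing to the $q$-fold cover $T'$ with parameter $\tilde\tau = q\tau - p$, decomposing it into annuli coming from the Koenigs linearizations at the $2mq$ fixed points of $f_\omega^{\circ q}$ (each of modulus $\pi/|\log\rho_x|$), and then applying the Gr\"otzsch/length--area inequality in the class $(0,1)$ is exactly what the paper does via Lemma~\ref{lem_f^q} and Section~\ref{sub-Buff-constr}. (The paper actually works in $\E(f_\omega)$ directly, where the $2m$ annuli live in the homology class $(-p,q)$ and one applies the second half of Lemma~\ref{lem-estimate-annuli}; your passage to the cover trades $2m$ annuli in class $(-p,q)$ for $2mq$ annuli in class $(0,1)$, which is the same estimate after the change of basis.) The ``technical point to nail down'' that you flag --- that the linearization strips really do descend to \emph{disjoint, embedded} annuli with core curve in the $(0,1)$ class --- is not a formality: it is precisely what the auxiliary construction of $\E(f)$ in Section~\ref{sub-Buff-constr} supplies, with the oscillating curve $\gamma$ between attracting and repelling points chosen so that $f_\omega(\gamma)$ stays above $\gamma$ and so that the projections of $\bbH^\pm\cap B_c$ tile the torus. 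Without this (or an equivalent) explicit construction, there is nothing ensuring the Koenigs neighborhoods can be shrunk compatibly to embed disjointly in the right class, so you should regard that paragraph of Section~\ref{sub-Buff-constr} as the missing piece of your proof rather than a routine check.

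For the second half, your argument genuinely differs from the paper's and is a nice alternative. The paper invokes the pointwise bound $|\log\rho_x|\le D_f$ (Lemma~\ref{th_as-Denjoy}, which uses the existence of a point with $(f^{\circ q})'=1$ together with the Denjoy distortion lemma) and the cardinality bound $|\Per(f_\omega)|\ge 2q$. You instead prove the \emph{aggregate} bound $\sum_x|\log\rho_x|\le\frac12 q D_f$ by observing that consecutive periodic points alternate between attracting and repelling (true because $f_\omega^{\circ q}-\mathrm{id}$ changes sign at each hyperbolic fixed point) and then comparing with the total variation $\int_\RZ|(\log(f_\omega^{\circ q})')'|\,\dx x\le qD_f$; combining this with Cauchy--Schwarz and $|\Per(f_\omega)|\ge q$ recovers the same constant $D_f/(4\pi q^2)$. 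Your computation checks out (the change of variables gives $\int|\tfrac{f''}{f'}(f_\omega^{\circ i}x)|(f_\omega^{\circ i})'(x)\,\dx x = D_f$, and the opposite-sign observation gives the factor $2$). Two minor remarks: you only use $|\Per(f_\omega)|\ge q$ where $\ge 2q$ is available --- with $\ge 2q$ your sharper variation bound would actually yield the stronger estimate $R_\omega\le D_f/(16\pi q^2)$ --- and your argument replaces Lemma~\ref{th_as-Denjoy} with a self-contained total-variation computation, which is a reasonable trade but should be stated as such if you want the proof to stand independently of that lemma.
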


The cardinal of $\Per(f_\omega )$ for a hyperbolic map is at least $2q$, and according to Lemma \ref{th_as-Denjoy}, for each $x\in \Per(f_\omega)$ we have $|\log \rho_x|\leq D_f$. Thus the estimate \eqref{eq:Romega} yields \eqref{eq:Bubblesize}.

Estimate \eqref{eq:Bubblesize} immediately implies Lemma \ref{lemma-bubblesize}.

Note that Lemma \ref{lemma_sizebubble} implies Lemma \ref{lemma-realendpoints}. Indeed, for real endpoints of bubbles, one of the multipliers $\rho_x$ tends to 1, and \eqref{eq:Romega} yields that $R_\omega$ tends to $0$ as $\omega \to 0$. Thus $\bar \tau_f(\omega)$ enters any horocycle at $p/q$.

\step
Let $\bar\tau_f\colon\RZ\to \CZ$ be defined by
\begin{itemize}
\item $\bar\tau_f(\omega) \dfn \rot(f_\omega)$ if the rotation number of $f_\omega$ is irrational or if $f_\omega$ has a parabolic cycle and
\item $\displaystyle \bar \tau_f(\omega)\dfn \lim_{\substack{y \to 0\\y>0}} \tau_f(\omega+\i y)$ if $f_\omega$ is hyperbolic.
\end{itemize}
This definition agrees with the definition of $\bar \tau_f(\omega)$ for hyperbolic $f_{\omega}$ (see \autoref{step_rational}). We are going to prove that $\bar \tau_f$ is the continuous extension of $\tau_f$ to the real axis; so the coincidence of the notation with that of Main Theorem is not accidental and will not lead to confusion.
\begin{lemma}\label{lem-continuity}(Continuity of the boundary function)
The function $\bar \tau_f$ is continuous on $\RZ$.
\end{lemma}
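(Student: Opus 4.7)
The plan is to fix $\omega_0\in\RZ$ and a sequence $\omega_n\to\omega_0$ in $\RZ$ and to verify that $\bar\tau_f(\omega_n)\to\bar\tau_f(\omega_0)$. The main tools will be continuity in $\omega$ of the ordinary rotation number $\rot(f_\omega)$, Theorem~\ref{th_mine} to handle the hyperbolic rational case, and the two forms of the bubble-size estimate in Lemma~\ref{lemma_sizebubble}: the coarse bound $R_\omega\le D_f/(4\pi q^2)$ and the sharp formula \eqref{eq:Romega} in terms of the multipliers $\rho_x$. I split the argument into three cases according to the nature of $f_{\omega_0}$.

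\textbf{Case A: $\rot(f_{\omega_0})$ is irrational.} Then $\bar\tau_f(\omega_0)=\rot(f_{\omega_0})$ by definition and $\rot(f_{\omega_n})\to\rot(f_{\omega_0})$ by continuity of rotation numbers. For each $n$ either $\bar\tau_f(\omega_n)=\rot(f_{\omega_n})$ (if $f_{\omega_n}$ has irrational rotation number or a parabolic cycle), or $f_{\omega_n}$ is hyperbolic with some rotation number $p_n/q_n$, in which case $\bar\tau_f(\omega_n)$ lies in a disk of radius at most $D_f/(4\pi q_n^2)$ tangent to $\RZ$ at $p_n/q_n$. Since $p_n/q_n$ converges to an irrational one has $q_n\to\infty$, so the disk shrinks to the point $\rot(f_{\omega_0})$. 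In every subcase $\bar\tau_f(\omega_n)\to\bar\tau_f(\omega_0)$.

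\textbf{Case B: $\rot(f_{\omega_0})=p/q$ and $f_{\omega_0}$ has a parabolic cycle.} Here $\bar\tau_f(\omega_0)=p/q$. The subcases $\rot(f_{\omega_n})\ne p/q$ are handled exactly as in Case A: an irrational rotation number gives the value directly, and a rational $p_n/q_n\ne p/q$ converging to $p/q$ forces $q_n\to\infty$ so the bubble shrinks. The case $\rot(f_{\omega_n})=p/q$ with $f_{\omega_n}$ parabolic gives $\bar\tau_f(\omega_n)=p/q$ at once. The only delicate subcase is $\rot(f_{\omega_n})=p/q$ with $f_{\omega_n}$ hyperbolic, since the coarse $1/q^2$ bound does not shrink when $q$ is fixed. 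I would invoke the sharp formula \eqref{eq:Romega}: the parabolic cycle of $f_{\omega_0}$ deforms (possibly after bifurcation) into nearby cycles of $f_{\omega_n}$ whose multipliers tend to $1$, so $1/|\log\rho_x(\omega_n)|\to\infty$ for some $x\in\Per(f_{\omega_n})$. Hence $R_{\omega_n}\to 0$ and $\bar\tau_f(\omega_n)\to p/q$.

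\textbf{Case C: $\rot(f_{\omega_0})=p/q$ and $f_{\omega_0}$ is hyperbolic.} By persistence of hyperbolic periodic orbits, $f_{\omega_n}$ is hyperbolic with rotation number $p/q$ for all large $n$, so $\omega_0$ is interior to the phase-locked tongue. Theorem~\ref{th_mine} then produces an analytic, hence continuous, extension of $\tau_f$ across a neighborhood of $\omega_0$, which by the definition of $\bar\tau_f$ in the hyperbolic case coincides with $\bar\tau_f$ on the real segment near $\omega_0$. Continuity at $\omega_0$ is then immediate.

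The principal obstacle is the delicate subcase of Case B: one needs to track the multipliers of the hyperbolic cycles of $f_{\omega_n}$ as $\omega_n$ approaches the parabolic parameter $\omega_0$ within the same Arnold tongue, and to invoke standard bifurcation-theoretic arguments to show that at least one multiplier tends to $1$. The crude $q^{-2}$ estimate is useless there because the denominator is constant; only the multiplier-sensitive formula \eqref{eq:Romega} detects the fact that the bubble must pinch down to the tangency point $p/q$ at the parabolic parameter.
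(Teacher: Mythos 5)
Your Cases A and C and the ``not an endpoint'' and ``real endpoint'' subcases of Case B all follow the paper's argument, which proceeds by the same case split. But the resolution you propose for the ``delicate subcase'' of Case B is wrong, and it is exactly the hardest part of the whole lemma.

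You assert that when $\omega_n\to\omega_0$ within the same tongue ($\rot(f_{\omega_n})=p/q$, $f_{\omega_n}$ hyperbolic, $f_{\omega_0}$ parabolic), the parabolic cycle of $f_{\omega_0}$ ``deforms (possibly after bifurcation) into nearby cycles of $f_{\omega_n}$ whose multipliers tend to $1$,'' so that some term $1/|\log\rho_x|$ blows up and $R_{\omega_n}\to 0$. This is precisely the \emph{real endpoint} scenario of the paper (Lemma~\ref{lemma-realendpoints}), and there your argument is correct and is the paper's argument. It fails, however, at a \emph{complex endpoint}: the parabolic periodic points of $f_{\omega_0}$ bifurcate off the real circle into complex conjugate cycles as $\omega$ moves into the tongue, so for $\omega_n$ near $\omega_0$ the set $\Per(f_{\omega_n})\subset\RZ$ consists only of hyperbolic cycles that persist from $f_{\omega_0}$, whose multipliers stay bounded away from $1$. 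Then the sum $\sum_{x\in\Per(f_{\omega_n})}1/|\log\rho_x|$ stays bounded, $R_{\omega_n}$ does \emph{not} tend to $0$, and \eqref{eq:Romega} only confines $\bar\tau_f(\omega_n)$ to a fixed disk tangent at $p/q$ --- it does not give convergence to $p/q$. (For this scenario $f_{\omega_0}$ must have surviving hyperbolic cycles, as the paper notes, so $\Per(f_{\omega_n})$ is nonempty.)

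The paper handles this case by a completely different and much longer argument (Lemma~\ref{lemma-complexendpoints} and Section~\ref{sec-complex}): one decomposes $\E(f_{\omega}^{\circ q})$ into annuli $B_j$ glued by transition maps $\xi_j$, compares it quasiconformally to a model torus $E'$ with translation gluings (Lemma~\ref{lemma-qctwist}, using the distortion bound of Lemma~\ref{lemma-disxi}), and then shows that, while $\Im(\sigma_\omega)$ stays bounded, $\Re(\sigma_\omega)\to-\infty$ as $\omega$ approaches the complex endpoint, because the translation lengths across the intervals containing disappearing parabolic points blow up. Geometrically: an unbounded number of Dehn twists accumulates, which makes the modulus degenerate to $p/q$ tangentially to $\RZ$. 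Your proposal is missing this entire mechanism; the multiplier-based shrinking of the disk simply does not occur at complex endpoints, and no bifurcation-theoretic refinement of that idea can rescue it, since the relevant multipliers genuinely do not tend to $1$.
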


It is particularly difficult to prove the continuity of $\bar \tau_f$ at complex endpoints of bubbles. For the points $\omega$ where $f_{\omega}$ is hyperbolic (points of bubbles), it follows from Theorem \ref{th_mine}; for real endpoints of bubbles, we use  Lemma \ref{lemma-realendpoints}; for the points with irrational $\rot(f_{\omega})$, we need Lemma \ref{lemma-bubblesize}.

\step
The holomorphic map $\tau_f\colon\HZ\to \HZ$ has radial limits on $\RZ$ almost everywhere, and those limits coincide with the continuous map $\bar\tau_f$. It follows easily
that $\tau_f$ extends continuously by $\bar\tau_f$ to $\RZ$.

\section{Multipliers of periodic orbits and distortion}

Before embarking into the proof of our results, we shall obtain the useful estimate on multipliers of periodic orbits of a circle diffeomorphism (Lemma \ref{th_as-Denjoy}).

The \emph{distortion} of a diffeomorphism $f\colon I\to J$ is
\[\distortion_{I}  (f) = \max_{x,y \in I} \log  \frac{f'(x)}{f'(y)}.\]
If $f\colon I\to J$ and $g\colon J\to K$ are diffeomorphisms, then
\[\distortion_J (f^{-1}) = \distortion_I (f)\quad \text{and}\quad \distortion_I ( g \circ f) \le \distortion_I (f) + \distortion_J (g).\]

\begin{lemma}[Denjoy]
\label{denjoy1}
Let $f\colon\RZ\to \RZ$ be an orientation preserving diffeomorphism and  $I\subset \RZ$ be an interval such that $I$, $f(I)$, $f^{\circ 2}(I)$, \ldots, $f^{\circ n}(I)$ are disjoint. Then,
\[\distortion_{I}  (f^{\circ n}) \leq D_f.\]
\end {lemma}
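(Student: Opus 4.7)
The plan is to use the standard Denjoy telescoping argument. Writing the derivative of the iterate as a product via the chain rule,
\[
(f^{\circ n})'(x) = \prod_{k=0}^{n-1} f'\bigl(f^{\circ k}(x)\bigr),
\]
so for any $x,y\in I$,
\[
\log\frac{(f^{\circ n})'(x)}{(f^{\circ n})'(y)} = \sum_{k=0}^{n-1}\Bigl(\log f'\bigl(f^{\circ k}(x)\bigr) - \log f'\bigl(f^{\circ k}(y)\bigr)\Bigr).
\]

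Next I would estimate each summand. Since $f^{\circ k}(x), f^{\circ k}(y)$ both lie in the interval $J_k\dfn f^{\circ k}(I)$, the fundamental theorem of calculus applied to $\log f'$ gives
\[
\Bigl|\log f'\bigl(f^{\circ k}(x)\bigr) - \log f'\bigl(f^{\circ k}(y)\bigr)\Bigr| \le \int_{J_k}\left|\frac{f''(t)}{f'(t)}\right|\dx t.
\]
Here I am using that the arc of $\RZ$ joining the two images is contained in $J_k$; this is where the hypothesis that $I$ is an interval (and $f$ is a diffeomorphism, so $J_k$ is again an interval) is used.

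Finally, the disjointness hypothesis on $I, f(I),\dots, f^{\circ n}(I)$ implies that the intervals $J_0,J_1,\dots,J_{n-1}$ are pairwise disjoint subsets of $\RZ$, so summing the previous inequality yields
\[
\left|\log\frac{(f^{\circ n})'(x)}{(f^{\circ n})'(y)}\right| \le \sum_{k=0}^{n-1}\int_{J_k}\left|\frac{f''(t)}{f'(t)}\right|\dx t \le \int_{\RZ}\left|\frac{f''(t)}{f'(t)}\right|\dx t = D_f.
\]
Taking the maximum over $x,y\in I$ gives $\distortion_I(f^{\circ n})\le D_f$, as claimed.

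There is no real obstacle here: the proof is a one-line application of the chain rule plus the observation that the telescoping sum is controlled by a single integral over $\RZ$ thanks to the disjointness assumption. The only small point worth noting is that one must take $J_k$ to be an actual interval (not just a pair of points) in order to bound the difference of $\log f'$ by the integral of $|f''/f'|$, but this is automatic because $f$ is a diffeomorphism of $\RZ$ and $I$ is an interval.
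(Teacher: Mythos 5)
Your proof is correct and is essentially identical to the paper's: both decompose $\log (f^{\circ n})'(x) - \log (f^{\circ n})'(y)$ via the chain rule into a telescoping sum, bound each term by the integral of $|f''/f'|$ over the arc $J_k = f^{\circ k}(I)$, and use disjointness of the $J_k$ to bound the total by $D_f$. You are slightly more explicit that the bound on each summand requires $J_k$ to be an interval containing both iterates, but this is the same argument.
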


\begin{proof}
Let $x$ and $y$ be points in $I$. Set $x_k\dfn f^{\circ k}(x)$ and $y_k \dfn f^{\circ k}(y)$.
Then,
\begin{align*}
\bigl|\log (f^{\circ n})'(x) - \log (f^{\circ n})'(y) \bigr|& =
\left|\sum_{k=0}^{n-1} \log f'(x_k)-\log f'(y_k)\right|
\\
& \leq  \sum_{k=0}^{n-1} \left|\int_{x_k}^{y_k} \frac{f''(x)}{f'(x)}\dx x\right| \leq \int_\RZ \left| \frac{f''(x)}{f'(x)}\right|\dx x = D_f.\qedhere
\end{align*}
\end{proof}

As a corollary, we have the following control on  the multipliers of the periodic orbits of $f$. This result is surely known by specialists, but we include its proof due to the lack of a suitable reference.

\begin{lemma}(Estimate on multipliers)
\label{th_as-Denjoy}
Let $f\colon\RZ\to \RZ$ be an orientation preserving diffeomorphism and $\rho$ be the multiplier of a cycle of $f$. Then, $|\log \rho| \leq D_f$.
\end{lemma}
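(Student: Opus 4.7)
The plan is to apply Lemma~\ref{denjoy1} to a fundamental domain of $g \dfn f^{\circ q}$ chosen at a point where $g' = 1$.

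First, a reduction. A change of variable shows $D_{f^{-1}} = D_f$, and passing to $f^{-1}$ replaces $\rho$ by $1/\rho$; the case $\rho = 1$ is trivial, so I may assume $\rho < 1$. Let $x_0$ be a point of the cycle, so $g(x_0) = x_0$ and $g'(x_0) = \rho < 1$. A standard lift argument shows that $x_0$ cannot be the only fixed point of $g$ (otherwise one forces $\rho = 1$), so there is an adjacent fixed point $\beta$ with $(x_0, \beta)$ free of fixed points of $g$. Since $\int_{x_0}^{\beta} g'(x)\, dx = \beta - x_0$ while $g'(x_0) = \rho < 1$, the function $g'$ cannot stay strictly below $1$ throughout $(x_0, \beta)$, so by continuity and the intermediate value theorem there is a point $a \in (x_0, \beta)$ with $g'(a) = 1$.

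Set $J \dfn [g(a), a]$: a fundamental domain for the action of $g$ on $(x_0, \beta)$. The combinatorial heart of the proof is to check that the iterates $J, f(J), f^{\circ 2}(J), \dots$ are pairwise disjoint. Writing $k = mq + \ell$ with $0 \leq \ell < q$, the interval $f^{\circ k}(J) = f^{\circ \ell}(g^{\circ m}(J))$ sits inside the immediate basin of $x_\ell \dfn f^{\circ \ell}(x_0)$ under $g$; these basins are pairwise disjoint for distinct $\ell$, and within the basin of $x_0$ the intervals $g^{\circ m}(J)$ tile a subinterval of $(x_0, a]$.

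Granted this disjointness, Lemma~\ref{denjoy1} delivers $\distortion_J(f^{\circ Nq}) \leq D_f$ for every $N \geq 1$. Applied to the endpoints $a$ and $g(a)$ of $J$, the distortion bound becomes
\[
\left| \log \frac{(f^{\circ Nq})'(a)}{(f^{\circ Nq})'(g(a))} \right| \leq D_f.
\]
A telescoping product simplifies the ratio to $(f^{\circ q})'(a) / (f^{\circ q})'(f^{\circ Nq}(a))$, whose denominator tends to $(f^{\circ q})'(x_0) = \rho$ as $N \to \infty$ while the numerator equals $1$ by choice of $a$. Passing to the limit yields $|\log \rho| \leq D_f$. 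The main obstacle is the combinatorial disjointness verification; once that is in hand, the desired inequality drops out of a single limit.
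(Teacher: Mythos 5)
Your proof is correct, but it takes a genuinely different and noticeably longer route than the paper's. Both arguments hinge on the same two ingredients: Denjoy's Lemma~\ref{denjoy1}, and the existence of a point at which $(f^{\circ q})'$ takes the value $1$. The paper observes that the average of $(f^{\circ q})'$ over the whole circle equals $1$, picks any $x_0$ with $(f^{\circ q})'(x_0)=1$, notes that the cycle cuts the circle into $q$ intervals cyclically permuted by $f$, and applies Denjoy exactly once with $n=q$ to the interval $I_1$ whose closure contains both $x_0$ and the cycle point $x$; the bound $|\log\rho| \le \distortion_{I_1}(f^{\circ q}) \le D_f$ drops out immediately, with no limit. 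You instead normalize to $\rho<1$ (using $D_{f^{-1}}=D_f$), dispose of the degenerate one-fixed-point case, locate a derivative-one point $a$ \emph{inside} the attracting basin, build a fundamental domain $J=[g(a),a]$, verify disjointness of the full forward $f$-orbit of $J$ across the $q$ basins, apply Denjoy to $f^{\circ Nq}$, telescope, and pass to the limit $N\to\infty$. Each of these steps is sound, but each is also avoidable: since the paper's $x_0$ sits in the same cycle-cut interval whose endpoint is the periodic point, a single finite distortion estimate already compares $\rho$ with $1$, without pushing a moving point to the fixed point. Your limiting mechanism is, however, exactly the right tool a few pages later: it is the argument the paper itself uses to bound $\distortion_I(\phi_j)$ for the linearizing chart in the proof of Lemma~\ref{lemma-disxi}, where the comparison really must be carried to the fixed point and the limit is unavoidable. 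So the machinery you built is not wasted conceptually --- it simply appears one lemma too early here.
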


\begin{proof}
The average of the derivative $(f^{\circ q})'$ along the circle $\RZ$ is equal to $1$. As a consequence, there exists a point $x_0
\in \RZ$ such that $(f^{\circ q})'(x_0)=1$. Any periodic orbit $\set{x, f(x), \dots, f^{\circ q}(x)=x}$ divides the circle into disjoint
intervals $I_1,\dots, I_q$ which are permuted by $f$. Without loss of generality, we may assume that $I_1$ contains $x$ and $x_0$. Then, according to the previous Lemma,
\[|\log \rho| = \bigl|\log (f^{\circ q})'(x)\bigr|=\left|\log \frac{(f^{\circ q})'(x)}{(f^{\circ q})'(x_0)}\right|\leq \distortion_{I_1} (f^{\circ q}) \leq D_f.\qedhere
\]
\end{proof}

\section{The Diophantine case}

We include a proof of Theorem \ref{th_Mold}. It is a simplified version of original proof of Moldavskis \cite{M}.

The proof relies on the following lemma on quasiconformal maps which is classical.

\begin{lemma}\label{lem-quasi-dist}
Suppose that there exists a $K$-quasiconformal map between two complex tori $E_1$ and $E_2$. Then
\[\distance_{\bbH} (\tau(E_1), \tau(E_2)) \le \log K\]
where $\distance_{\bbH}$ is the hyperbolic distance in $\bbH$, and where $\tau(E_{1})\in \bbH$ and $\tau(E_2)\in \bbH$ are moduli with respect to corresponding generators in $H_1(E_1)$ and $H_1(E_2)$.
\end{lemma}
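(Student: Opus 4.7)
My plan is to reduce the $K$-quasiconformal inequality to an extremal computation by a real-affine (Teichmüller) map, which is the standard route for tori.

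First, I would lift $\phi \colon E_1 \to E_2$ to a $K$-quasiconformal homeomorphism $\tilde\phi \colon \bbC \to \bbC$. The matching of generators in $H_1(E_1)$ and $H_1(E_2)$ forces $\tilde\phi$ to intertwine the lattices $\Lambda_i \dfn \bbZ + \tau(E_i)\bbZ$ via the unique $\bbR$-linear map $A \colon \bbC \to \bbC$ sending $1 \mapsto 1$ and $\tau(E_1) \mapsto \tau(E_2)$; more precisely, $\tilde\phi(z+1) = \tilde\phi(z)+1$ and $\tilde\phi(z+\tau(E_1)) = \tilde\phi(z)+\tau(E_2)$. A direct computation writes $A(z) = az + b\bar z$ with
\[ \mu_A \dfn \frac{b}{a} = \frac{\tau(E_1)-\tau(E_2)}{\overline{\tau(E_1)}-\tau(E_2)},\qquad K(A) = \frac{1+|\mu_A|}{1-|\mu_A|}. \]

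Second, I would invoke the extremality of the affine representative: any quasiconformal homeomorphism of $\bbC$ intertwining $\Lambda_1$ and $\Lambda_2$ as above has dilatation at least $K(A)$. For the torus this is classical and admits a short proof by averaging. Namely, one averages the Beltrami coefficient $\mu_{\tilde\phi}$ over a fundamental domain of $\Lambda_1$ to produce a constant Beltrami coefficient $\bar\mu$ with $|\bar\mu|\le \|\mu_{\tilde\phi}\|_\infty$; the affine map with Beltrami coefficient $\bar\mu$ is still $\Lambda_1$-to-$\Lambda_2$ equivariant, hence equals $A$, forcing $|\mu_A| \le \|\mu_{\tilde\phi}\|_\infty$ and so $K(A) \le K$. (Alternatively one can quote Teichmüller's theorem for tori, e.g.\ Gardiner--Lakic.)

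Finally, I would match the two sides of the inequality. Using the standard formula
\[ d_\bbH(z,w) = \log\frac{|z-\bar w|+|z-w|}{|z-\bar w|-|z-w|} \]
on the upper half-plane and substituting $z = \tau(E_1)$, $w = \tau(E_2)$, I obtain
\[ d_\bbH\bigl(\tau(E_1),\tau(E_2)\bigr) = \log\frac{1+|\mu_A|}{1-|\mu_A|} = \log K(A) \le \log K, \]
which is the claim.

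The only nontrivial ingredient is the extremality step; the affine computation and the hyperbolic distance formula are purely algebraic. I would present the averaging proof of extremality explicitly if the paper does not wish to cite it, as it is a short half-page argument using Fubini and the triangle inequality applied to the complex dilatation.
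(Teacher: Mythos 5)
The paper does not prove this lemma; it simply states it as classical and invokes it. Your proposal is therefore providing a proof where the paper provides none, and your overall route — reduce to the extremality of the real-affine representative $A$, then identify $\log K(A)$ with $d_\bbH\bigl(\tau(E_1),\tau(E_2)\bigr)$ — is the standard one and is sound. The affine computation and the hyperbolic-distance identity are both correct (up to a harmless sign in $\mu_A$; only $|\mu_A|$ enters).

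However, the ``short averaging proof'' you sketch for the extremality step is not correct as stated. If you average the Beltrami coefficient $\mu_{\tilde\phi}$ over a fundamental domain to get a constant $\bar\mu$, the affine map with Beltrami coefficient $\bar\mu$ need not send $\Lambda_1$ to $\Lambda_2$: the lattice that a constant Beltrami coefficient produces from $\Lambda_1$ is a highly nonlinear function of that coefficient, and averaging does not commute with it. (Averaging the \emph{derivatives} $\partial_z\tilde\phi$, $\partial_{\bar z}\tilde\phi$ instead does recover the coefficients $a,b$ of $A$ — because $\tilde\phi-A$ is $\Lambda_1$-periodic — but the naive triangle inequality then bounds $|b|$ by $\|\mu\|_\infty\cdot\operatorname{avg}|\partial_z\tilde\phi|$, not by $\|\mu\|_\infty\cdot|a|$, so one still cannot conclude.) The correct elementary argument is the Grötzsch length–area inequality: for each primitive $(m,n)$, compare the Euclidean length of the geodesic in the class $m+n\tau_1$ with the length of its $\tilde\phi$-image, integrate over a fundamental domain and apply Cauchy–Schwarz to get $\dfrac{|m+n\tau_2|^2}{\Im\tau_2}\le K\,\dfrac{|m+n\tau_1|^2}{\Im\tau_1}$; then take the supremum over coprime $(m,n)$, which by density in $\bbR P^1$ equals the supremum over all real directions and hence equals $e^{d_\bbH(\tau_1,\tau_2)}$. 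Alternatively, your fallback of simply citing Teichmüller's theorem for tori (Gardiner–Lakic, or Ahlfors' \emph{Lectures on Quasiconformal Mappings}) is perfectly adequate and is likely what the authors had in mind by ``classical.''
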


Without loss of generality, we may assume that $\omega=0$, so $f\colon\RZ\to \RZ$ has Diophantine rotation number $\theta\in \RZ$. A theorem of Yoccoz (see \cite{Yoc}) asserts that there is an
analytic circle diffeomorphism $\phi\colon\RZ\to \RZ$ conjugating the rotation of angle $\theta$ to $f$: for all $x\in \RZ$, we have
\[\phi(x+\theta)=f\circ \phi(x).\]
Let $\hat\phi\colon\CZ\to \CZ$ be the homeomorphism defined by
\[\hat\phi(z) = \phi\bigl(\Re(z)\bigr)+\i \Im(z).\]
Then, $\hat\phi\colon\CZ\to \CZ$ is a $K$-quasiconformal homeomorphism with
\[K\dfn \max\bigl(\|\phi'\|_\infty,\|1/\phi'\|_\infty\bigr).\]
Now, for any $y>0$,
\[\hat\phi(x+\theta+\i y) = f\bigl(\hat\phi(x)\bigr)+iy,\]
and so, $\hat\phi$  induces a $K$-quasiconformal homeomorphism between the complex tori $\bbC/\bigl(\bbZ+(\theta+\i y)\bbZ\bigr)$ and $E(f_{\i y})$.
It follows that for $y>0$, the hyperbolic distance in $\HZ$ between $\theta+\i y$ and $\tau_f(\i y)$ is uniformly bounded and thus,
\[\lim_{\substack{y\to 0\\ y>0}}\tau_f(\i y) = \theta.\]

\section{The hyperbolic case: formation of bubbles}\label{sub-Buff-constr}

We recall the arguments of the proof of Theorem  \ref{th_mine} given in \cite{NG}. It is based on an auxiliary construction of a complex torus $\E(f)$ when $f\colon\RZ\to \RZ$ has rational rotation number and is hyperbolic. This construction will be used again in the proofs of Lemmas  \ref{lemma-bubblesize}, \ref{lemma-realendpoints},  \ref{lemma-complexendpoints}, and \ref{lemma_sizebubble}  for $f_{\omega}$ playing the role of $f$.

Let us assume $f\colon\RZ\to \RZ$ has rational rotation number $p/q$ and  has only hyperbolic periodic orbits.
The number $m\geq 1$ of attracting cycles is equal to the number of repelling cycles.
Denote by $\alpha_j$, $j\in \bbZ/(2mq)\bbZ$, the periodic points of $f$, ordered cyclically; even indices correspond to attracting periodic points and odd indices to repelling periodic points. Note that $f(\alpha_j) = \alpha_{j+2mp}$.

Let $\rho_j$ be the multiplier of $\alpha_j$ as a fixed point of $f^{\circ q}$ and $\phi_j\colon(\bbC,0)\to (\CZ,\alpha_j)$ be the linearizing map which conjugates multiplication by $\rho_j$ to $f^{\circ q}$:
\[ f^{\circ q}\circ \phi_j(z)=\phi_j(\rho_j z)\]
and is normalized by $\phi_j'(0)=1$. Then,
\[f\circ \phi_j (z) = \phi_{j+2mp}(\lambda_j\cdot z)\quad \text{with}\quad \lambda_j \dfn f'(\alpha_j).\]
In addition,  if $\eps>0$ is small enough, the linearizing map $\phi_j$ extends univalently to the strip $\Set{z \in \bbC|{|\Im(z)|<\eps}}$ and
\[\phi_j(\bbR) = (\alpha_{j-1},\alpha_{j+1}).\]

For each $j\in \bbZ/(2mq)\bbZ$, let $x_j$ be a point in $(\alpha_j,\alpha_{j+1})$, so that
\begin{itemize}
\item $f(x_j)\in (\alpha_{j+2pm}, x_{j+2pm})$ if the orbit of $\alpha_j$ attracts (i.e. $ j$  is even) and
\item $f(x_j)\in (x_{j+2pm}, \alpha_{j+2pm+1})$ if the orbit of $\alpha_j$ repels (i.e. $ j$ is odd).
\end{itemize}
This is possible since $f^{\circ q}(x_j) \in (\alpha_j, x_j)$ when $j$ is even and $f^{\circ q}(x_j) \in (x_j,a_{j+1})$ when $j$ is odd.
Similarly, let $\eps_j$ be a point on the negative imaginary axis if $j$ is even and on the positive imaginary axis if $j$ is odd, so that for all $j\in \bbZ/(2mq\bbZ)$,
\begin{itemize}
\item $|\eps_j|<\eps$, $|\lambda_j\eps_j|<\eps$ and
\item $\lambda_j  \eps_j$ is above $\eps_{j+2mp}$.
\end{itemize}

Let $C_j$ be the arc of circle with endpoints $\phi_j^{-1}(x_{j-1})$ and $\phi_j^{-1}(x_j)$ passing through $\eps_j$ and set
\[\gamma \dfn \bigcup_{j\in \bbZ/(2mq \bbZ)} \phi_j(C_j).\]
Then, $\gamma$ is a simple closed curve in $\CZ$ and $f$ is univalent in a neighborhood of $\gamma$.

\begin{figure}[htbp]
\centerline{
\begin{picture}(0,0)%
\scalebox{.315}{\includegraphics{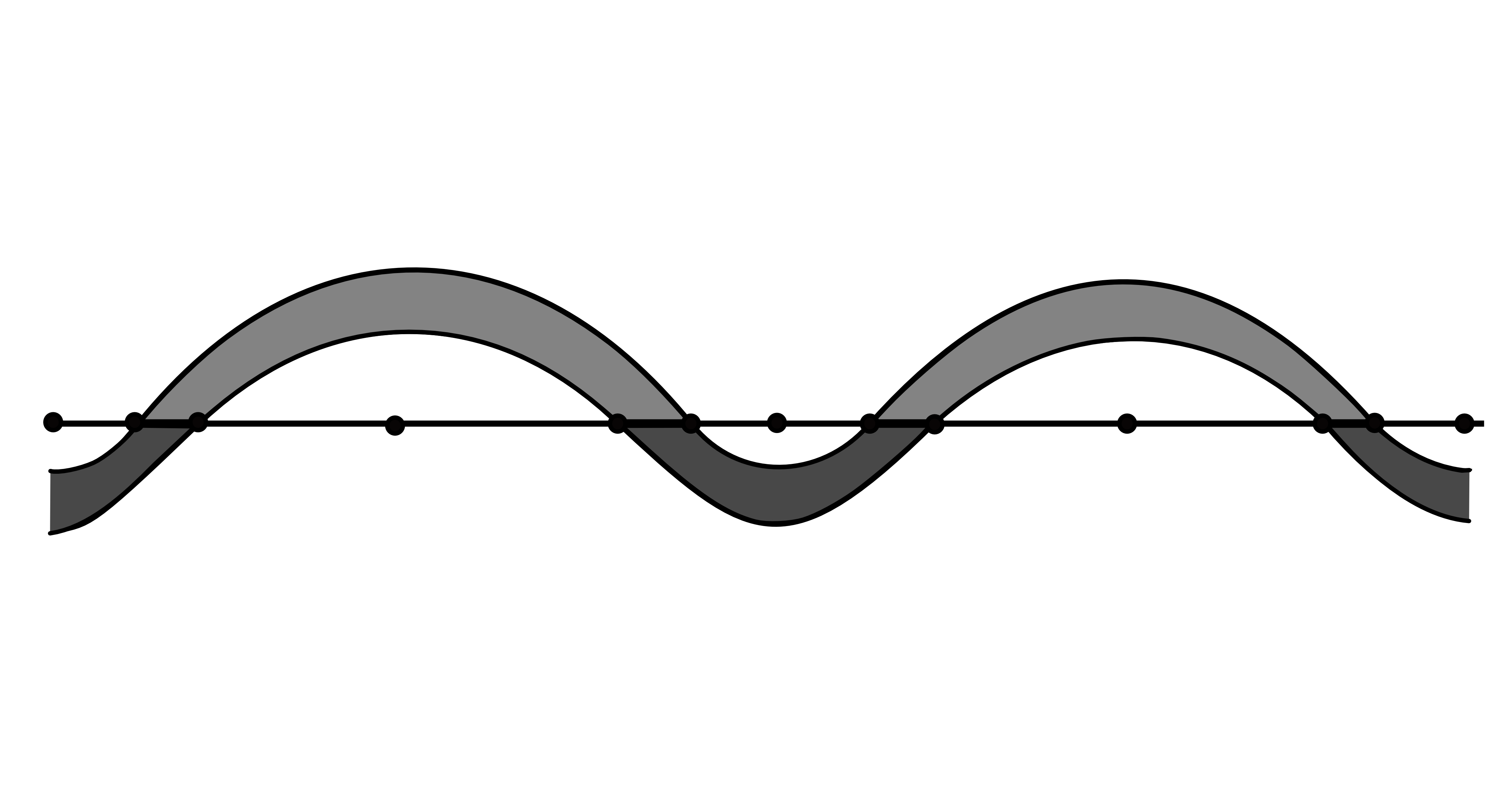}}
\end{picture}%
\setlength{\unitlength}{987sp}%
\begin{picture}(21330,10665)(1951,-12076)
\put(7551,-7136){$\alpha_1$}
\put(13451,-6036){$\alpha_0$}
\put(18300,-7136){$\alpha_1$}
\put(5950,-5850){$\gamma$}
\put(3951,-4800){$f(\gamma)$}
\put(15400,-7136){$x_0$}
\put(10800,-7136){$x_1$}
\put(21000,-7136){$x_1$}
\put(4400,-7136){$x_0$}
\end{picture}
}
\caption{
A possible choice of curve $\gamma$ for the map $f \colon \CZ\ni z \mapsto z+\frac{1}{4\pi}\sin(2\pi x)\in \CZ$ which restricts as a hyperbolic
circle diffeomorphism $f\colon \RZ\to \RZ$.
The curve $f(\gamma)$ lies above $\gamma$ in $\CZ$. The essential
annulus between $\gamma$ and $f(\gamma)$ is colored (light grey in the
upper half-plane and dark grey in the lower half-plane).
The map $f$ has an attracting fixed point at $\alpha_0 \dfn 0\in \RZ$ and
a repelling fixed point at $\alpha_1 \dfn 1/2\in \RZ$.}
\end{figure}

The attracting cycles of $f$ are above $\gamma$ in $\CZ$ and the repelling cycles are below $\gamma$ in $\CZ$.
In addition,
\[f(\gamma) = \bigcup_{j\in \bbZ/(2mq \bbZ)} \phi_{j+2mp}(\lambda_j C_j)\]
and so,  $f(\gamma)$ lies above $\gamma$ in $\CZ$.

For $\omega$ sufficiently close to $0$, the curve $f_\omega(\gamma)=f(\gamma)+\omega$ remains above $\gamma$ in $\CZ$.
The curves $\gamma$ and $f_\omega(\gamma)$ bound an essential annulus in $\CZ$. Glueing the two sides  via $f_\omega$, we obtain a complex torus $\E(f_\omega)$, which may be uniformized as $\cal{E}_\tau \dfn \bbC/(\bbZ+\tau\bbZ)$ for some appropriate $\tau\in \HZ$, the homotopy class of $\gamma$ in $\E(f_\omega)$ corresponding
to the homotopy class of $\RZ$ in $\cal{E}_\tau$. Clearly, $\E(f_\omega)$ does not depend on the choice  of $\eps_j$. We set $\bar \tau_f(\omega) \dfn \tau\in \HZ$.

According to Risler \cite[Chapter 2, Proposition 2]{Ris}, the map $\omega\mapsto \bar \tau_f(\omega)$ is holomorphic.
When $\omega\in \HZ$, the complex torus $\E(f_\omega)$ is isomorphic to $E(f_\omega)$ and the homotopy class of $\gamma$ in $\E(f_\omega)$ corresponds to the homotopy class of $\RZ$ in $E(f_\omega)$ (see \cite{NG} for details; in some sense, $E(f_{iy})$ is a limit case of $\E(f_{iy})$ as $\eps_j$ tend to zero and $\gamma$ tends to the real axis). As a consequence, $\bar \tau_f(\omega) = \tau_f(\omega)$ when $\omega\in \HZ$ is sufficiently close to $0$. This completes the proof of Theorem  \ref{th_mine} for $\omega =0$: the map $\bar \tau_f$ extends $\tau_f$ analytically to a neighborhood of zero, as required.

\begin{remark}
 Note that the curve $\E(f)$ does not depend on the choice of an analytic chart on a circle: $\bar \tau_f(0) = \bar \tau_{gfg^{-1}}(0)$ for any orientation-preserving analytic circle diffeomorphism $g$. So we can give the description of $\E(f)$ in terms of moduli of analytic conjugation, that is, in terms of the multipliers of the fixed points and the transition maps between the linearizing charts $\phi_{j}$. This description is given at the beginning of Section \ref{sec-complex}.
 \end{remark}

We will also need a following observsation:

 \begin{lemma}\label{lem_f^q}
  The modulus of $\E(f^{\circ q})$ is $q$ times  bigger then the modulus of $\E(f)$:   $\bar \tau_{f^{\circ q}}(0) = q \bar \tau_f(0)$.
 \end{lemma}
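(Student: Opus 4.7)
The plan is to exhibit a natural holomorphic $q$-fold covering $\pi\colon\E(f^{\circ q})\to \E(f)$ and then read the identity $\bar\tau_{f^{\circ q}}(0)=q\bar\tau_f(0)$ off the induced inclusion of lattices in $\bbC$.

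First, I would observe that the construction of Section~\ref{sub-Buff-constr} can be performed with the same curve $\gamma$ for $f$ and for $f^{\circ q}$: the fixed points of $f^{\circ q}$ are precisely the periodic points of $f$, their multipliers (as fixed points of $f^{\circ q}$) coincide with the $\rho_j$, and the linearizing charts are the same $\phi_j$. The annulus used to form $\E(f)$ is $A_0$, the region between $\gamma$ and $f(\gamma)$, whereas the annulus used to form $\E(f^{\circ q})$ is the larger region
\[
U\dfn \bigcup_{k=0}^{q-1}A_k,\qquad A_k\dfn f^{\circ k}(A_0),
\]
bounded below by $\gamma$ and above by $f^{\circ q}(\gamma)$.

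Next I would define $\pi\colon U\to \E(f)$ by setting $\pi(z)\dfn f^{-k}(z)\in \overline{A_0}$ whenever $z\in \overline{A_k}$, the right-hand side being understood modulo the gluing $y\sim f(y)$, $y\in \gamma$, that defines $\E(f)$. Compatibility on the interior curves $f^{\circ k}(\gamma)$ separating consecutive $A_k$ reduces to $f^{-k}\circ f = f^{-(k-1)}$ together with the gluing identification in $\E(f)$; compatibility with the gluing $y\sim f^{\circ q}(y)$ on the outer boundary of $U$ that defines $\E(f^{\circ q})$ follows from $\pi(f^{\circ q}(y))=f^{-(q-1)}(f^{\circ q}(y))=f(y)\sim y = \pi(y)$ for $y\in \gamma$. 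Thus $\pi$ descends to a holomorphic map $\E(f^{\circ q})\to \E(f)$, which is an unramified covering of degree $q$ (the fibre over a point $w\in A_0\subset \E(f)$ is $\{w,f(w),\ldots,f^{\circ(q-1)}(w)\}$) and which restricts to the identity on $\gamma$.

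The final step is a lattice computation. Any holomorphic covering of complex tori lifts through the universal covers to an affine map $\bbC\to \bbC$. With the normalization of the uniformizations in which $\gamma$ corresponds to $\bbR/\bbZ$ in both tori, this lift must be the identity on $\bbR$, hence the identity on $\bbC$. Writing $\E(f)=\bbC/(\bbZ+\tau\bbZ)$ and $\E(f^{\circ q})=\bbC/(\bbZ+\tau'\bbZ)$ with $\tau=\bar\tau_f(0)$ and $\tau'=\bar\tau_{f^{\circ q}}(0)$, one obtains an inclusion $\bbZ+\tau'\bbZ\subset \bbZ+\tau\bbZ$ of index $q$. Writing $\tau'=a+b\tau$ with $a,b\in \bbZ$, the index equals $|b|$; positivity of the imaginary parts forces $b=q$, so $\tau'\equiv q\tau\pmod{\bbZ}$, i.e.\ $\bar\tau_{f^{\circ q}}(0)=q\bar\tau_f(0)$ in $\HZ$. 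The one point that will demand genuine care is the well-definedness of $\pi$: reconciling two distinct gluings (by $f$ and by $f^{\circ q}$) through the single formula $\pi|_{\overline{A_k}}=f^{-k}$. Once that is in place, the remainder is the elementary classification of unramified covers of complex tori.
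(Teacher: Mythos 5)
Your proof is correct and takes essentially the same route as the paper: the paper phrases it as the quotient of $\E(f^{\circ q})$ by the order-$q$ automorphism induced by $f$, which is exactly the degree-$q$ unramified covering $\pi$ you construct, and both arguments then read $\bar\tau_{f^{\circ q}}(0)=q\bar\tau_f(0)$ off the fact that $\gamma$ lifts with degree $1$ to each of its $q$ preimages. You are more explicit about the well-definedness of $\pi$ across the two gluings and about the lattice index computation, but the underlying mechanism is identical.
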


 \begin{proof}
The diffeomorphism $f$ induces an automorphism of $\E(f^{\circ q})$ of order $q$. The quotient of $\E(f^{\circ q})$ by this automorphism is isomorphic to $\E(f)$. The class of $\gamma$ in $\E(f)$ has $q$ disjoint preimages in $\E(f^{\circ q})$ which map with degree $1$ to $\gamma$. It follows that $\E(f^{\circ q})$ is isomorphic to ${\cal E}_{q\bar \tau_f(0)} \dfn \bbC/(\bbZ+q\bar \tau_f(0) \bbZ)$, the class of $\gamma$ in $\E(f^{\circ q})$ corresponding to the class of $\RZ$ in ${\cal E}_{q\bar \tau_f(0)}$.
 \end{proof}

\section{Lemma \ref{lemma-bubblesize} (size of bubbles) and Lemma \ref{lemma-realendpoints} (continuity at the real endpoints)}

We now come to our main contribution, starting with the proof of Lemma \ref{lemma_sizebubble}.
Assume $f_{\omega}\colon\RZ\to \RZ$ has rational rotation number $p/q$ and  has only hyperbolic periodic orbits. For simplicity of notation, we put $\omega=0$ and write $f=f_0 $ instead of $f_{\omega}$.
 As in Section \ref{sub-Buff-constr}, consider a simple closed curve $\gamma$ oscillating between the attracting cycles of $f$ (which are above $\gamma$ in $\CZ$) and the repelling cycles of $f$ (which are below $\gamma$ in $\CZ$), so that $f(\gamma)$ lies above $\gamma$ in $\CZ$.

The curves $\gamma$ and $f(\gamma)$ bound an essential annulus in $\CZ$. Glueing the curves via $f$, we obtain a complex torus $\E(f)$ isomorphic to ${\cal E}_\tau \dfn \bbC/(\bbZ+\tau\bbZ)$ with $\tau \dfn \bar\tau_f(0)\in \HZ$, the class of $\gamma$ in $\E(f)$ corresponding to the class of $\RZ$ in ${\cal E}_\tau$.

The projection of $\RZ$ in $\E(f)$ consists of $2m$ topological circles cutting $\E(f)$ into $2m$ annuli associated to the cycles of $f$. The moduli of the annuli depend only on multipliers of $f$.
More precisely, each attracting (respectively repelling) cycle $c$ has a basin of attraction $B_c$ for $f$ (respectively for $f^{-1}$), and the projection of $\bbH^-\cap B_c$ (respectively $\bbH^+\cap B_c$) in $\E(f)$ is an annulus $A_c$ of modulus
\[\mod A_c= \frac{\pi}{|\log \rho_c|},\]
where $\rho_c$ is the multiplier of $c$ as a cycle of $f$.

Those annuli wind around the class of $\gamma$ in $\E(f)$ with combinatorial rotation number $-p/q$.  Now, we can estimate $\E(f)$ in terms of the moduli of the annuli. It follows from a classical length-area argument (see Lemma \ref{lem-estimate-annuli} below)
that there is a representative $\tilde \tau\in \bbH$ of $\tau\in \HZ$ such that
 \[\sum_{c\text{ cycle of }f} \mod A_c\leq \frac{\Im(\tilde \tau)}{|-p+q\tilde \tau|^2}.\]
As a consequence,
 \[
\frac 12 \frac{| \tilde \tau- p/q|^2}{\Im (\tilde \tau)} \le  R_{\omega}  \dfn  \frac{1}{2\displaystyle  \pi q^2 \cdot \sum_{c\text{ cycle of }f} \mod A_c},
\]
which yields Lemma \ref{lemma_sizebubble} since
\[\sum_{c\text{ cycle of }f} \mod A_c = \sum_{c\text{ cycle of }f} \frac{\pi}{|\log \rho_c|} = \frac{1}{q}\sum_{x\in \Per(f)} \frac{\pi}{|\log \rho_x|}.\]

The proof of the first estimate in Lemma \ref{lemma_sizebubble} is completed by the following lemma.

\begin{lemma}\label{lem-estimate-annuli}

\begin{enumerate}
\item \label{estim-0}Let elliptic curve ${\cal E}_\tau =\bbC/(\bbZ+\tau\bbZ)$ contain several disjoint annuli $A_j$ which correspond to the first generator of $ H_1(E)$. Then $\Im (\tau) \ge \sum \mod A_j$.

\item  \label{estim-p/q}Let elliptic curve ${\cal E}_\tau = \bbC/(\bbZ+\tau\bbZ)$ contain several disjoint annuli $A_j$. Suppose that these annuli correspond to the element $(a,b) \sim a+b\tau$ of $ H_1(E)$, and $a$ and $b$ are coprime.  Then
\begin{equation}
 \label{eq:tau}
\frac{\Im (\tau)}{|a+b \tau|^2} \ge \sum \mod A_j.
\end{equation}
\end{enumerate}
\end{lemma}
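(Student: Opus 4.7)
The plan is a classical length–area argument, with part (2) reduced to part (1) by a change of basis.

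For part (\ref{estim-0}), I would equip $\mathcal{E}_\tau$ with the flat metric $|dz|$ pushed down from $\mathbb{C}$; its total area is $\Im(\tau)$. For each embedded annulus $A_j$, let $L_j$ be the infimum of the lengths (measured in this flat metric) of curves in $A_j$ homotopic to the core of $A_j$. Since the core represents the class $[1]\in H_1(\mathcal{E}_\tau)$, any such curve is freely homotopic in $\mathcal{E}_\tau$ to the closed geodesic $\mathbb{R}/\mathbb{Z}$ of length $1$, hence $L_j\geq 1$. The extremal length characterization of the modulus of an annulus gives $\mod A_j \leq \mathrm{Area}(A_j)/L_j^{\,2}$ for any conformal metric. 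Applying this with the flat metric and using $L_j\geq 1$ yields $\mathrm{Area}(A_j)\geq \mod A_j$. Summing over the disjoint annuli,
\[
\Im(\tau)=\mathrm{Area}(\mathcal{E}_\tau)\geq \sum_j \mathrm{Area}(A_j)\geq \sum_j \mod A_j.
\]

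For part (\ref{estim-p/q}), I would reduce to part (\ref{estim-0}) by a change of basis. Since $a,b$ are coprime, choose integers $c,d$ with $ad-bc=1$; then $\{a+b\tau,\,c+d\tau\}$ is another $\mathbb{Z}$-basis of $\mathbb{Z}+\tau\mathbb{Z}$. Dividing by $a+b\tau$ realizes a biholomorphism
\[
\mathcal{E}_\tau \;\xrightarrow{\ \cong\ }\; \mathcal{E}_{\tau'},\qquad \tau'\dfn \frac{c+d\tau}{a+b\tau},
\]
under which the homology class $(a,b)=a+b\tau$ is sent to the first generator of $H_1(\mathcal{E}_{\tau'})$. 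A short computation gives
\[
\Im(\tau')=\Im\!\left(\frac{(c+d\tau)(a+b\bar\tau)}{|a+b\tau|^2}\right)=\frac{(ad-bc)\,\Im(\tau)}{|a+b\tau|^2}=\frac{\Im(\tau)}{|a+b\tau|^2}.
\]
(If the chosen pair yields $ad-bc=-1$, I would replace $(c,d)$ by $(-c,-d)$ so that $\Im(\tau')>0$.) The images of the $A_j$ under the biholomorphism are disjoint annuli of the same moduli whose cores are homotopic to the first generator in $\mathcal{E}_{\tau'}$. Applying part (\ref{estim-0}) to $\mathcal{E}_{\tau'}$ yields
\[
\frac{\Im(\tau)}{|a+b\tau|^2}=\Im(\tau')\geq \sum_j \mod A_j,
\]
which is the desired inequality \eqref{eq:tau}.

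The only genuinely analytic input is the extremal-length inequality $\mod A\leq \mathrm{Area}(A)/L^2$, which is standard; all other steps are linear algebra and a conformal rescaling. I expect no serious obstacles: the main thing to verify carefully is that, under the biholomorphism $z\mapsto z/(a+b\tau)$, the core class $a+b\tau\in H_1(\mathcal{E}_\tau)$ really does go to the first generator of $H_1(\mathcal{E}_{\tau'})$, which is a matter of chasing the change of basis $\{a+b\tau,c+d\tau\}\mapsto\{1,\tau'\}$.
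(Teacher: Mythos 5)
Your proof is correct and follows essentially the same route as the paper: part (2) is reduced to part (1) by the same change of basis and the same computation of $\Im\tau'$, and part (1) is the same length--area inequality $\mod A_j\le\mathrm{Area}(A_j)$ in the flat metric followed by summing over disjoint annuli. The only cosmetic difference is that you invoke the extremal-length characterization of the modulus as a black box, whereas the paper re-derives the inequality from scratch via Cauchy--Schwarz applied to the uniformizing maps $\Phi_j\colon B_j\to A_j$; the underlying estimate (the length of any closed curve homotopic to the core is at least $1$) is the same in both.
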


\begin{proof}
Let us derive the second statement of this lemma from the first one. Let $k,l$ be integers satisfying $ak+bl=1$.
Apply the first statement of this lemma to the elliptic curve  $\bbC / ((a+b\tau)\bbZ+(-l+k\tau)\bbZ) $ (this is the curve ${\cal E}_\tau $ with another choice of generators). We get
\[\Im \frac{-l+k\tau}{a+b\tau}\ge \sum \mod A_j.\]
This is equivalent to \eqref{eq:tau} since
\[\Im \frac{-l+k\tau}{a+b\tau}=\Im \frac{(-l+k\tau)(a+b\bar\tau)}{|a+b \tau|^2}=\frac{(ak+bl)\Im (\tau)}{|a+b \tau|^2}= \frac{\Im (\tau)}{|a+b \tau|^2}\]

 The proof of the first statement is an application of a classical length-area argument. Namely, let $B_j = \{z \in \bbC \mid 0<\Im (z)<\mod A_j\} / \bbZ$ be the standard annulus of modulus $\mod A_j$, let $\Phi_j \colon B_j \to A_j \subset \mathcal E_{\tau}$ be biholomorphic map. Then
 $$
\int\int_{B_j} |\Phi'_j|^2 dx dy = \mathrm{Area}\, A_j;
$$
$$
 \int_0^{1} |\Phi_j'(x,iy)| dx = \mathrm{Length}\, \Phi_j([iy, iy+1]) \ge |\Phi_j(iy+1) - \Phi_j(iy)|=1,
 $$
 the latter equality holds since $\Phi_j$ is well-defined as a map of the annulus to the elliptic curve. Integrating the latter inequality along $y \in [0,\mod A_j]$, we get
 $$
\int\int_{B_j} |\Phi'_j| dx dy \ge \mod A_j.
 $$
 Now, we apply Cauchy inequality and get
 $$
2 \mod A_j \le  \int \int_{B_j} 2 |\Phi'_j| dx dy \le  \int_{B_j} |\Phi'_j|^2 +1 dx = \mod A_j + \mathrm{Area}\, A_j.
 $$
 thus $\mod A_j \le \mathrm{Area}\, A_j$. Adding these inequalities, we get
 $$
\Im (\tau) \ge  \sum  \mathrm{Area}\, A_j \ge  \sum \mod A_j.
 $$

 \end{proof}

Recall that the first statement of Lemma \ref{lemma_sizebubble} implies Lemma \ref{lemma-realendpoints}. Roughly speaking, at the real endpoint of a bubble one of the multipliers $ \rho_c$ tends to one, and the modulus of the corresponding annulus $A_c$ tends to infinity; thus our elliptic curve $\E(f_\omega)$ degenerates, and its modulus tends to the real axis.

Recall that the first statement of Lemma \ref{lemma_sizebubble} together with Lemma \ref{th_as-Denjoy} immediately implies the second statement of Lemma  \ref{lemma_sizebubble},  and thus Lemma \ref{lemma-bubblesize}.

\section{Lemma \ref{lemma-complexendpoints}: continuity at the complex endpoints of bubbles}
\label{sec-complex}

First, we explain the  main idea behind the proof for the case of zero rotation number.

We introduce another construction of the curve $\E(f)$. For each attracting fixed point $\alpha_j$,  consider  the annulus $\bbH^{-} / \{z \sim \rho_j z\}$ in the linearizing chart $\phi_j$;  for repelling fixed points,  take the annuli $\bbH^{+} / \{z \sim  \rho_j z\}$. These annuli are biholomorphic to $A_c$. Now, let us glue subsequent annuli via transition maps $\phi_{j+1}^{-1} \circ \phi_j $ between subsequent linearizing charts. The result is $\E(f)$ \footnote{ However, in this section  we shall pass to logarithmic charts $\frac {\log \phi_j}{\log \rho_j}$ for simplicity of notation.}.

For \emph{real} endpoints of bubbles, some of the moduli of $A_c$ tend to infinity, which makes $\E(f) $ to degenerate. For \emph{complex} endpoints of bubbles, the moduli of $A_c$ do not tend to infinity. We will examine the gluings $\phi_{j+1}^{-1} \circ \phi_j $  in the case when a new parabolic fixed point appears between $\alpha_j$ and $\alpha_{j+1}$ as $\omega \to \omega_0, \omega>\omega_0$; this is exactly what happens at the complex endpoint of a bubble. Roughly speaking,  we will find out that an infinite number of Dehn twists is applied to $\E(f_{\omega})$ as $\omega \to \omega_0, \omega>\omega_0$, and this makes $\E(f)$ to degenerate.
Technically, we will replace $\E(f_{\omega})$ by a quasiconformally close complex torus $E'$ (Lemma \ref{lemma-qctwist}), obtained from the same annuli via the close gluings.  Then we will prove that infinite number of Dehn twists is applied to $E'$.

At the beginning of this section, we work with individual hyperbolic map $f_{\omega}$, and for simplicity of notation we consider only the case $\omega=0$. Suppose that $\rot (f) = p/q$; then we  pass to the map $f^{\circ q}$ using  Lemma \ref{lem_f^q}.

The projection of $\RZ$ in $\E(f^{\circ q})$ cuts the torus in $2mq$ annuli $A_j$, $j\in \bbZ/(2mq) \bbZ$, which wind around the class of $\gamma$ with combinatorial rotation number $0$ and have moduli
\[\mod A_j = m_j \dfn \frac{\pi}{|\log \rho_j|}.\]
Let the strip $S_j\subset \bbC$ and the annulus $B_j\subset \CZ$ be defined by
\[S_j \dfn \set{z\in \bbC|0<\Im(z)<m_j}\quadand B_j \dfn S_j/\bbZ,\]
let $\pi \colon S_j \to B_j$ be a natural projection.
The annulus $B_j$ is biholomorphic to $A_j$. The map $z\mapsto \phi_j\circ \exp(z\cdot \log \rho_j) \colon S_j \to A_j$ induces an isomorphism $\chi_j\colon B_j\to A_j$ which extends analytically to the boundary. Consider  the points $r_j, s_j \in B_j$ given by
\begin{align*}
&r_j \dfn \pi (\tilde r_j), \quad \tilde r_j \dfn \frac{\log \phi_j^{-1}(x_j)}{\log \rho_j}\\
&s_j \dfn \pi(\tilde s_j), \quad \tilde s_j \dfn \frac{\log |\phi_j^{-1}(x_{j-1})|}{\log \rho_j}+\frac{\i \pi}{|\log \rho_j|}
\end{align*}
The point $r_j$ belongs to the lower boundary component $C_j^- \dfn \RZ$ of $B_j$, and $s_j$ belongs to its upper boundary component $C_j^+ \dfn (\bbR+\i m_j)/\bbZ$.  Note that  $\chi_j(r_j)$ is the class of $x_j$ in $\E(f^{\circ q})$, and $\chi_j (s_j) $ is the class of $x_{j-1}$ in $\E(f^{\circ q})$
 (see Figure \ref{fig:annuliintorus}).

\begin{figure}[htbp]
\centerline{
\begin{picture}(0,0)%
\scalebox{.75}{\includegraphics{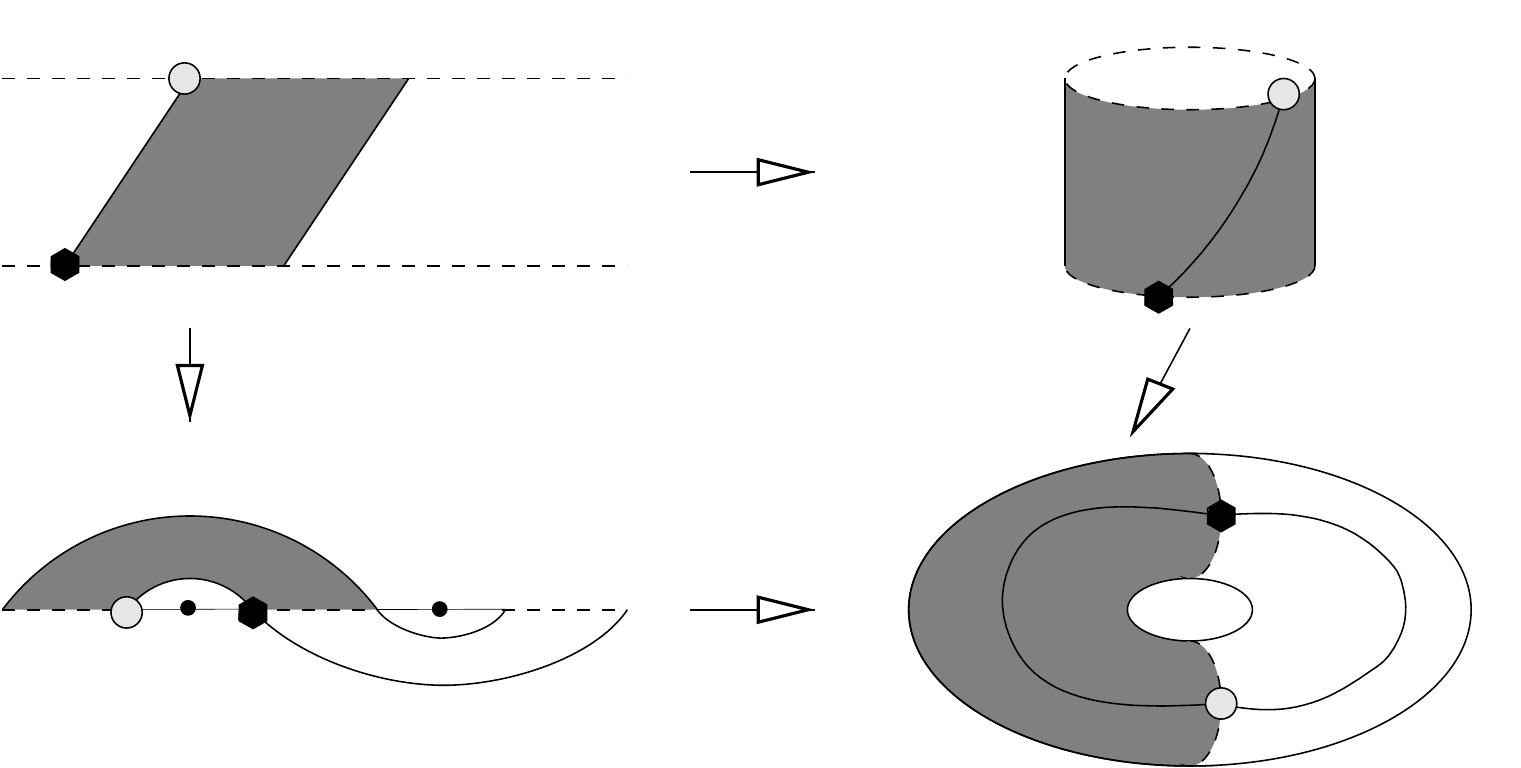}}
\end{picture}%
\setlength{\unitlength}{2960sp}%
\begin{picture}(7288,3676)(1189,-3368)
\put(782,-1000){$\scriptstyle 0$}
\put(582, -100){$\scriptstyle {\frac{i\pi }{\log \rho_j}}$}
\put(1482,-1165){$\scriptstyle \tilde r_j$}
\put(2482,-1165){$\scriptstyle 1+\tilde r_j$}
\put(1592,-2800){$\scriptstyle x_{j-1}$}
\put(3243,-2493){$\scriptstyle \alpha_{j+1}$}
\put(2022,-2800){$\scriptstyle \alpha_j$}
\put(2308,-2800){$\scriptstyle x_j$}
\put(7130,-50){$\scriptstyle s_j$}
\put(6546,-1230){$\scriptstyle r_j$}
\put(2067,106){$\scriptstyle \tilde s_j$}
\put(3067,106){$\scriptstyle 1+\tilde s_j$}
\put(2251,-1486){$\scriptstyle z\mapsto\phi_j\circ \exp(z\cdot\log \rho_j)$}
\put(3001,-3086){$\scriptstyle \gamma$}
\put(2626,-2231){$\scriptstyle f(\gamma)$}
\put(3451,-511){$\mathbf{ S_j}$}
\put(4700,-311){$\scriptstyle \pi $}
\put(4451,-2411){$\scriptstyle z\sim f(z)$}
\put(7551,-511){$\mathbf{ B_j}$}
\put(6926,-1486){$\scriptstyle \chi_j$}
\put(5683,-2659){$\mathbf A_j$}
\put(7930,-3256){$\mathbf{ \E(f)}$}
\end{picture}
}
\caption{ $A_j$, $S_j$, $B_j$ and the curve $\E(f)$. \label{fig:annuliintorus}}
\end{figure}

On the one hand, a complex torus $\E(f)$ is the result of glueing the lower boundary components $C_j^-$ of $B_j$ with the upper boundary components $C_{j+1}^+$ of $B_{j+1}$  via the analytic diffeomorphisms
\[
\xi_j \dfn \chi_{j+1}^{-1}\circ \chi_j\colon C_j^-\to C_{j+1}^+.
\]
Let $\delta_j$ be the projection of the segment $[\tilde r_j,\tilde s_j]$ to $\E(f)$.
Then the simple closed curve
\[
\delta \dfn \bigcup_{j\in \bbZ/(2mq)\bbZ} \delta_j.
\]
has the same homotopy class as $\gamma$ in $\E(f)$.

On the other hand, glueing the lower boundary components $C_j^-$ of $B_j$ with the upper boundary components $C_{j+1}^+$ of $B_{j+1}$  via \emph{the translations} by $z\mapsto z-r_j+s_{j+1}$, we obtain a complex torus $E'$.

It is easy to see that its modulus is
\[
\sigma \dfn \sum_{j\in \bbZ/(2mq) \bbZ}\tilde s_j-\tilde r_j.
\]
Let $\delta'_j$ be the projection of the segment $[\tilde r_j,\tilde s_j]$ to $E'$.
The homotopy class of the simple closed curve
\[\delta' \dfn \bigcup_{j\in \bbZ/(2mq)\bbZ} \delta'_j\]
in $E'$ corresponds to the homotopy class of $\sigma\bbR/\sigma\bbZ$ in ${\cal E}_\sigma$ (i.e. to its second generator).

The following lemma shows that we can replace non-trivial gluings $\xi_j$ by linear maps.

\begin{lemma}\label{lemma-qctwist}
Let $\rot (f) = p/q$. The modulus of the curve $\E(f^{\circ q})$ is $5D_f$-close to the modulus of the curve $E'$ corresponding to $f^{\circ q}$:
\[\distance_\HZ\left(q\bar \tau_{f}(0),-\frac{1}{\sigma}\right)\leq 5D_f\]
\end{lemma}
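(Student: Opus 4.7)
The plan is to construct a quasiconformal homeomorphism $\Phi\colon \E(f^{\circ q})\to E'$ with dilatation $K$ satisfying $\log K\le 5D_f$, and conclude via Lemma \ref{lem-quasi-dist}. One subtlety: the class $[\delta]=[\gamma]$ is the first generator of $\E(f^{\circ q})\cong \cal E_{q\bar\tau_f(0)}$, whereas $[\delta']$ is the second generator of $E'\cong \cal E_\sigma$. Since the $\Phi$ I will construct identifies $\delta$ with $\delta'$, the modular swap $\sigma\mapsto -1/\sigma$ brings both tori to a common generator convention, so the hyperbolic distance controlled by Lemma \ref{lem-quasi-dist} is precisely $\distance_\HZ(q\bar\tau_f(0),-1/\sigma)$, matching the statement.

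Both tori are obtained by gluing the same annuli $B_j$: $\E(f^{\circ q})$ via the analytic diffeomorphisms $\xi_j\colon C_j^-\to C_{j+1}^+$, and $E'$ via the translations $T_j(z)\dfn z-\tilde r_j+\tilde s_{j+1}$; by construction $\xi_j(r_j)=s_{j+1}=T_j(r_j)$. I define $\Phi$ annulus by annulus: let $g_j\dfn T_j^{-1}\circ \xi_j\colon C_j^-\to C_j^-$, and let $H_j\colon B_j\to B_j$ be the straight-line interpolation
\[H_j(x+iy)\dfn \bigl((1-y/m_j)\,g_j(x)+(y/m_j)\,x\bigr)+iy,\]
which equals $g_j$ on $C_j^-$ and the identity on $C_j^+$. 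Compatibility of gluings is immediate: if $y\in C_j^-\subset \E(f^{\circ q})$ is identified with $\xi_j(y)\in C_{j+1}^+$, then $H_j(y)=T_j^{-1}(\xi_j(y))$ is glued via $T_j$ in $E'$ to $\xi_j(y)=H_{j+1}(\xi_j(y))$, so the $H_j$'s descend to a well-defined homeomorphism $\Phi\colon \E(f^{\circ q})\to E'$.

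The dilatation of $H_j$ is controlled by $\Delta_j\dfn \sup_{C_j^-}|\log \xi_j'|$ (note $g_j$ fixes $r_j$, so $|g_j(x)-x|\le \tfrac12(e^{\Delta_j}-1)$), together with the height of the strip $m_j$. Here Lemma \ref{th_as-Denjoy} gives the crucial lower bound $m_j=\pi/|\log \rho_j|\ge \pi/D_f$, so the interpolation strip is not too thin. The main analytic input is therefore a uniform estimate $\Delta_j\le C\cdot D_f$, independent of $j$ and of $q$. Writing $\xi_j=\chi_{j+1}^{-1}\circ \chi_j$ with $\chi_j(z)=\phi_j(\exp(z\log\rho_j))$, the quantity $\log \xi_j'$ is, up to constants, a difference of $\log \phi_j'$ and $\log \phi_{j+1}'$ evaluated at corresponding points of the interval $(\alpha_j,\alpha_{j+1})$. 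Since each $\phi_j$ is a uniform limit of iterates of $f^{\circ q}$ appropriately normalized at $\alpha_j$, its distortion on any compact subinterval is the limit of distortions of $f^{\circ qn}$. Each such distortion is bounded by $D_f$ via Denjoy's Lemma \ref{denjoy1}, using that the intervals between consecutive periodic points are permuted cyclically by $f$ and hence their first $q-1$ forward $f$-iterates are disjoint.

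The principal obstacle is this last quantitative step. One must confirm that the distortion bound on $\xi_j$ is independent of $q$---the naive bound $D_{f^{\circ q}}\le q D_f$ would be catastrophic---and one must carefully trace constants through the straight-line interpolation (computing $H_{j,x}$, $H_{j,y}$ and the Beltrami coefficient) to obtain $\log K\le 5D_f$. The key leverage is twofold: the $f$-orbits of the periodic intervals are disjoint, so Denjoy applies uniformly to arbitrarily high iterates of $f^{\circ q}$ with the same bound $D_f$; and the lower bound $m_j\ge \pi/D_f$ prevents the interpolation strip from degenerating. Once these two facts are in place the bound $\log K \le 5D_f$ follows from a direct and elementary computation, and Lemma \ref{lem-quasi-dist} completes the proof.
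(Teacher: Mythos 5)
Your proposal is essentially the paper's proof: the same straight-line interpolation $H_j$ on each strip $S_j$ (the paper's $\tilde\Psi_j$), the same appeal to Lemma \ref{lemma-disxi} for $\distortion_\bbR(\xi_j)\le 4D_f$ and to Lemma \ref{th_as-Denjoy} for the height bound, the same modular swap $\sigma\mapsto -1/\sigma$ to align generators, and the same conclusion via Lemma \ref{lem-quasi-dist}. One small simplification you missed: since $g_j=\tilde\psi_j$ is the lift of a circle homeomorphism fixing $\tilde r_j$, one has the sharp bound $|g_j(x)-x|<1$ directly from periodicity, which is what the paper uses to control the imaginary part of the Beltrami coefficient without any reference to $\Delta_j$.
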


The proof of this lemma is based on the following estimate on $\xi_j$.

\begin{lemma}\label{lemma-disxi}
For any $j\in \bbZ/(2mq)\bbZ$, the distortion of the map $\xi_j$ corresponding to the map $f^{\circ q}$ is bounded by $4D_f$.
\end{lemma}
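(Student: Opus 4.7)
The plan is to derive an explicit formula for $|\xi_j'(z)|$ in terms of the two linearizing coordinates, split the oscillation of its logarithm into two contributions, and bound each via Denjoy applied to iterates of $f$ itself (not $f^{\circ q}$), exploiting a combinatorial disjointness argument.

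Differentiating $\chi_j(z)=\phi_j(\rho_j^z)$ on $C_j^-$ and $\chi_{j+1}(z')=\phi_{j+1}(-\rho_{j+1}^{z'})$ on $C_{j+1}^+$ together with the identification $\chi_j(z)=\chi_{j+1}(\xi_j(z))=w$, and writing $y\dfn\Psi_j(w)>0$, $y'\dfn\Psi_{j+1}(w)<0$ (with $\Psi_j\dfn\phi_j^{-1}$), I obtain
\[
|\xi_j'(z)| \;=\; \frac{|\log\rho_j|}{\log\rho_{j+1}}\cdot\frac{U_j(w)}{U_{j+1}(w)},\qquad U_j(w)\dfn\frac{\Psi_j(w)}{\Psi_j'(w)},\quad U_{j+1}(w)\dfn\frac{|\Psi_{j+1}(w)|}{\Psi_{j+1}'(w)}.
\]
Consequently $\distortion(\xi_j)$ equals the oscillation of $\log U_j-\log U_{j+1}$ as $w$ traces one fundamental domain $I\dfn(x_j,f^{\circ q}(x_j))\subset(\alpha_j,\alpha_{j+1})$ of $f^{\circ q}$.

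I next bound the oscillation of $\log U_j=\log\Psi_j-\log\Psi_j'$ on $I$ by two contributions. The linearization identity $\Psi_j\circ f^{\circ q}=\rho_j\Psi_j$ gives $\Psi_j(I)=(\rho_j\Psi_j(x_j),\Psi_j(x_j))$, so the oscillation of $\log\Psi_j$ on $I$ is exactly $|\log\rho_j|$. For the distortion of $\Psi_j$ on $I$, the Koenigs formula $\Psi_j'(w)=\lim_{n\to\infty}(f^{\circ nq})'(w)/\rho_j^n$ gives, for $w_1,w_2\in I$,
\[
\log\frac{\Psi_j'(w_1)}{\Psi_j'(w_2)} \;=\; \lim_{n\to\infty}\log\frac{(f^{\circ nq})'(w_1)}{(f^{\circ nq})'(w_2)}.
\]
The key combinatorial observation is that $f$ permutes the $2mq$ intervals between consecutive $\alpha_i$ with rotation number $p/q$, and the successive $f^{\circ q}$-iterates of $I$ inside $(\alpha_j,\alpha_{j+1})$ are pairwise disjoint (they accumulate monotonically at $\alpha_j$); hence $I,f(I),\ldots,f^{\circ(nq-1)}(I)$ are pairwise disjoint subsets of $\RZ$ for every $n$. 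Lemma \ref{denjoy1} then yields $\bigl|\log\bigl((f^{\circ nq})'(w_1)/(f^{\circ nq})'(w_2)\bigr)\bigr|\le D_f$ uniformly in $n$, so in the limit the distortion of $\Psi_j$ on $I$ is at most $D_f$. Symmetrically, the oscillation of $\log U_{j+1}$ on $I$ is at most $\log\rho_{j+1}+D_f$.

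Adding the two bounds and invoking Lemma \ref{th_as-Denjoy} ($|\log\rho_j|,\log\rho_{j+1}\le D_f$) gives
\[
\distortion(\xi_j) \;\le\; |\log\rho_j|+\log\rho_{j+1}+2D_f \;\le\; 4D_f.
\]
The main obstacle is the combinatorial disjointness argument: without it, one could only apply Denjoy directly to $f^{\circ q}$ and obtain the much weaker constant $D_{f^{\circ q}}\le qD_f$, losing the clean bound by a factor of $q$.
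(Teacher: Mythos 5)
Your proof is correct and follows essentially the same route as the paper's: the paper writes $\xi_j = e_{j+1}^{-1}\circ\phi_{j+1}^{-1}\circ\phi_j\circ e_j$ and adds distortions via subadditivity, while you differentiate $\xi_j$ explicitly, but both decompositions yield exactly the same four contributions $|\log\rho_j|+\distortion_I(\phi_j^{-1})+\distortion_I(\phi_{j+1}^{-1})+|\log\rho_{j+1}|$. The crucial step — bounding the distortion of each Koenigs chart by $D_f$ (and not $qD_f$) by applying Denjoy's lemma to $f$ itself, using that a fundamental interval of $f^{\circ q}$ in $(\alpha_j,\alpha_{j+1})$ is disjoint from all of its $f$-iterates — is also the same in both.
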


Lemma \ref{lemma-disxi} shows that $\E(f^{\circ q})$ and $E'$ are glued from the same annuli $B_j$ via the close maps, $\xi_j$ and $z \mapsto z-r_j+s_{j+1}$ respectively. The rest of the proof of Lemma \ref{lemma-qctwist} is purely technical. We construct a quasiconformal map from $\E(f^{\circ q})$ to $E'$ (actually, a tuple of maps from $B_j$ to itself) which takes $\delta_j$ to $\delta'_j$. We estimate its dilatation using Lemma \ref{lemma-disxi}. Then we refer to  Lemma \ref{lem-quasi-dist}. The detailed proof of Lemma \ref{lemma-qctwist} is in Appendix \ref{sec-lem-qctwist}.

\begin{proof}[Proof of Lemma \ref{lemma-disxi}]
The map $\xi_j\colon C_j^-\to C_{j+1}^+$ is induced by the following composition
\[\bbR\overset{e_j}\longrightarrow (0,+\infty)\overset{\phi_j}\longrightarrow (\alpha_j,\alpha_{j+1})\overset{\phi_{j+1}^{-1}}\longrightarrow (-\infty,0)\overset{e_{j+1}^{-1}}\longrightarrow  \bbR+\i m_{j+1}.\]
with
\[e_j(z)  \dfn  \exp(z\cdot \log \rho_j)\quad \text{and}\quad e_{j+1}(z) = \exp(z\cdot \log \rho_{j+1}).\]
The distortion of $e_j$ on any interval of length $1$ is $|\log \rho_j|$ which is at most $D_f$ according to Lemma \ref{th_as-Denjoy}. Similarly, the distortion of $e_{j+1}$ on any interval of length $1$ is $|\log \rho_{j+1}|\leq D_f$.

Let $x$ be any point in $(\alpha_j,\alpha_{j+1})$ and let $I\subset \RZ$ be the interval whose extremities are $x$ and $f^{\circ q}(x)$.
To complete the proof, it is enough to show that
\[\distortion_I (\phi_{j}^{-1})\leq D_f \quad\text{and}\quad \distortion_I (\phi_{j+1}^{-1})\leq D_f.\]
We will only prove this result for $\phi_j$ in the case where the orbit of $\alpha_j$ is attracting. The other cases are dealt similarly and left to the reader.

On $I$, the linearizing map $\phi_j$ is the limit of the maps $\varphi_n \dfn (f^{\circ nq}-\alpha_j)/\rho_j^n$. Since $I$ is disjoint from all its iterates, Denjoy's Lemma \ref{denjoy1} yields
\[\distortion_I \varphi_n = \distortion_I f^{\circ qn} \leq D_f.\]
Passing to the limit as $n$ tends to $\infty$ shows that $\distortion_I \phi_j\leq D_f$  as required.
\end{proof}

Now, we come to the proof of Lemma \ref{lemma-complexendpoints}.
\begin{proof}
Without loss of generality, we suppose that  $\omega_0=0$.
According to Lemma \ref{lemma_sizebubble}, we know that for $\omega>0$ close to $0$, $\bar\tau_f(\omega)$ remains in a subdisk of $\HZ$ tangent to the real axis at $p/q$. Lemma \ref{lem_f^q} shows that $q \bar\tau_{f_{\omega}}(0) = \bar\tau_{(f_{\omega})^{\circ q}}(0)$. So, it is enough to prove that $\bar\tau_{(f_{\omega})^{\circ q}}(0)$ tends to $0$ tangentially to the segment $[0,\eps)\in \RZ$ and is located in between two horocycles  at  $0$.
According to Lemma \ref{lemma-qctwist}, the hyperbolic distance in $\HZ$ between $ \bar\tau_{(f_{\omega})^{\circ q}}(0)$ and $-1/\sigma_{\omega}$ (where $\sigma_\omega$ corresponds to the map $f_{\omega}^{\circ q}$ )  is uniformly bounded as $\omega>0$ tends to $0$. So, it is enough to show that the imaginary part of $\sigma_\omega$ is bounded and that the real part of $\sigma_\omega$ tends to $-\infty$.

We modify the notation of Section \ref{sub-Buff-constr}. Now, we have a family $f_{\omega}^{\circ q}$ of hyperbolic diffeomorphisms with $\rot (f_{\omega}^{\circ q})=0$, $\omega \in (0, \eps]$. For $\omega =0$, the map $f_0=f$ is not hyperbolic.

As in Section \ref{sub-Buff-constr}, let $\alpha_j(\omega)$, $j\in \bbZ/(2mq\bbZ)$, be all fixed points of $f_{\omega}^{\circ q}$  with multipliers $\rho_{\omega,j}$ and with linearizing charts  $\phi_{\omega,j}$. For the correct numbering, $\alpha_j(\omega)$ depend holomorphically on $\omega$ and $\alpha_j := \lim_{\omega \to 0} \alpha_j(\omega)$ are all \emph{hyperbolic} fixed  points of $f^{\circ q}$.  Then $\rho_j := \lim _{\omega \to 0}\rho_{\omega,j}$ are their multipliers, and $\phi_j:= \lim_{\omega \to 0} \phi_{\omega,j}$ are their linearizing charts; the latter convergence is guaranteed only in neighborhoods of hyperbolic cycles of $f$.

Now, we want to introduce points $x_j$ not depending on $\omega$ (this is a main advantage of the modified notation).

For each $j\in \bbZ/(2mq)\bbZ$, let $x_j$ be a point in $(\alpha_j,\alpha_{j+1})$, so that
\begin{itemize}
\item $f^{\circ q}(x_j)\in (\alpha_{j}, x_{j})$ if $\alpha_j$ attracts (i.e. $ j$  is even) and
\item $f^{\circ q}(x_j)\in (x_{j}, \alpha_{j+1})$ if $\alpha_j$ repels (i.e. $ j$ is odd).
\end{itemize}
These are exactly the  conditions from Section \ref{sub-Buff-constr} for the map $f^{\circ q}$ with $\rot (f^{\circ q})=0$, but here $f^{\circ q}$ is not hyperbolic.
Note that since the parabolic fixed points disappear as $\omega>0$ increases, the graph of $f^{\circ q}$ lies above the diagonal near those points. As a consequence, each parabolic fixed point of $f^{\circ q}$ lies in an interval of the form $(\alpha_j,\alpha_{j+1})$ with $\alpha_j$ repelling and $\alpha_{j+1}$ attracting.

 Finally, set
\[\tilde r_{\omega,j} \dfn \frac{\log \phi_{\omega,j}^{-1}(x_j)}{\log \rho_{\omega,j}},\quad  \tilde s_{\omega,j} \dfn \frac{\log |\phi_{\omega,j}^{-1}(x_{j-1})|}{\log \rho_{\omega,j}}+\frac{\i \pi}{|\log \rho_{\omega,j}|}\]
and
\[ \sigma_\omega \dfn  \sum_{j\in \bbZ/(2mq)\bbZ} \tilde s_{\omega,j}-\tilde r_{\omega,j}.\]
This definition agrees with the notation of Lemma \ref{lemma-qctwist}. $\sigma_{\omega}$  is equal to the number $\sigma$ from Lemma \ref{lemma-qctwist} corresponding to $f_{\omega}^{\circ q}$.

Now, it suffices to prove that the imaginary part of $\sigma_\omega$ is bounded and that the real part of $\sigma_\omega$ tends to $-\infty$.
Thus its modulus tends to $0$ in between two horocycles.

The imaginary part of $\sigma_\omega$ is equal to the sum of $\mod A_j$,
\[\Im(\tilde r_{\omega,j})=0\quad\text{and}\quad \Im(\tilde s_{\omega,j})=\frac{\pi}{|\log \rho_{\omega,j}|}\underset{\omega>0,\omega\to 0} \longrightarrow \frac{\pi}{|\log \rho_{j}|},\]
and we see that it remains bounded as $\omega>0$ tends to $0$.

If $f^{\circ q}$ has no parabolic fixed point on the interval $(\alpha_j,\alpha_{j+1})$, then $\phi_{\omega,j}^{-1}\to \phi_j^{-1}$ on the interval $(\alpha_j,\alpha_{j+1})$. It follows that $\Re(\tilde r_{\omega,j})$ and $\Re(\tilde s_{\omega,j+1})$ remain bounded.
If $f$ has a parabolic periodic point on  the interval $(\alpha_j,\alpha_{j+1})$, then $\alpha_j$ is repelling and $\alpha_{j+1}$ is attracting. Either the two quantities $\log \phi_{\omega,j}^{-1}(x_j)$ and $\log |\phi_{\omega,j+1}^{-1}(x_j)|$ tend to $+\infty$, or one remains bounded and the other tends to $+\infty$. Since $\log \rho_{\omega,j}\to \log \rho_j>0$ and $\log \rho_{\omega,j+1}\to \log \rho_{j+1}<0$, in both cases we have
\[\Re(\tilde s_{\omega,j+1}-\tilde r_{\omega,j})\underset{\omega>0,\omega\to 0}\longrightarrow -\infty.\qedhere\]
This finishes the proof.

\end{proof}

\section{Continuity of the boundary function \texorpdfstring{$\bar \tau_f$}{(bar)τ\_f}}

We now prove Lemma \ref{lem-continuity}. It is enough to prove that $\bar \tau_f$ is continuous at $\omega=0$.

\subsection{Irrational rotation number}

If  $\rot(f)$ is irrational, then $\bar \tau_f (0) =  \rot(f)$ due to the definition of $\bar \tau_f$.

Let $I\subset \RZ$ be a small neighborhood of $0$ such that for $\omega\in I$, the periods of the periodic orbits of $f_\omega$
are at least $N$. For  $\omega \in I$, either $\bar \tau_f(\omega)=\rot(f_\omega)$, or according to Lemma \ref{lemma_sizebubble},
\[\bigl|\bar \tau_f(\omega)-\rot(f_\omega)\bigr|\leq \frac{D_f}{2 \pi N^2}.\]
Thus, $\bar \tau_f(I)$ is located within $D_f/(2\pi N^2)$-neighborhood of $\Set{\rot(f_\omega)| \omega \in I}$. The result follows since $\omega\mapsto \rot(f_\omega)$ is continuous.

\subsection{Rational rotation number}
\label{subsub:contin-at-rational}

It is sufficient to prove that
\[ \lim _{\omega >0, \omega \to 0 } \bar \tau_f(\omega) = \frac pq = \bar \tau_f (0).\]
Indeed, if we apply this result to the  diffeomorphism $x\mapsto -f(-x)$ we get
\[\lim _{\omega<0, \omega\to 0 } \bar \tau_f(\omega) = \frac pq = \bar \tau_f (0).\]
(see Remark \ref{remark-leftRight} for details). There are the following cases.

\begin{enumerate}

 \item  $f$ is hyperbolic. The continuity of  $\bar \tau_f$ at $0$ follows directly from Theorem \ref{th_mine}.

 \item  $f$ has at least one parabolic cycle.

 \begin{itemize}
 \item $0$ is not a left endpoint of a bubble: all $q$-periodic orbits of $f$ disappear as $\omega$ increases ($\rot(f_{\omega})>p/q$ for $\omega>0$). In this case, the proof is literally the same as in the case of irrational rotation number.

\item $0$ is a real left endpoint of a bubble.  The result follows from Lemma \ref{lemma-realendpoints}.

\item $0$ is a complex left endpoint of a bubble. The result follows from Lemma \ref{lemma-complexendpoints}.
\end{itemize}
\end{enumerate}

\section{Proof of the Main Theorem}
\label{sec_contin}

The map
\[\CZ\ni z \mapsto \exp(2\pi\i z)\in \bbC-\set{0}\]
is an isomorphism of Riemann surfaces.
It conjugates $\tau_f \colon \HZ \to \HZ $ to a holomorphic function $g\colon\bbD-\set{0}\to \bbD-\set{0}$ and
$\bar\tau_f \colon\RZ\to \HZbar$ to a continuous function $h\colon\partial \bbD\to \overline \bbD$.
Since $g$ is bounded,  it extends holomorphically at $0$. According to the previous study,
\[\text{for almost every }t\in\RZ,\quad \lim_{r \to 1, r<1} g(r e^{2\pi \i t}) = h(e^{2\pi \i t}).\]
The Main Theorem is therefore a consequence of the following classical result.

\begin{lemma}
Let $g \colon \bbD \to \bbC$ be a bounded holomorphic function and $h\colon\partial \bbD\to \bbC$ be a continuous function such that:
\[\text{for almost every }t\in\RZ,\quad \lim_{r \to 1, r<1} g(r e^{2\pi \i t}) = h(e^{2\pi \i t}).\]
Then, $h$ extends $g$ continuously to $\overline\bbD$.
\end{lemma}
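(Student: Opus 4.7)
The plan is to identify $g$ with the Poisson integral of $h$ and then invoke the classical solvability of the Dirichlet problem for continuous boundary data. First, since $g$ is bounded and holomorphic on $\bbD$, Fatou's theorem produces radial limits at almost every point of $\partial \bbD$; denote the resulting boundary function by $g^{*}\in L^{\infty}(\partial \bbD)$. The hypothesis of the lemma states precisely that $g^{*}(e^{2\pi \i t})=h(e^{2\pi \i t})$ for almost every $t \in \RZ$.

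Next, I would use the fact that bounded harmonic functions on the unit disk are recovered as Poisson integrals of their a.e.\ radial boundary values. Applied to the real and imaginary parts of $g$ separately, this yields
\[g(z)=\int_{0}^{1} P(z,e^{2\pi \i t})\, g^{*}(e^{2\pi \i t})\,\dx t \quad \text{for all } z\in \bbD,\]
where $P$ is the Poisson kernel of $\bbD$. Because $g^{*}=h$ almost everywhere and $h$ is continuous (hence bounded) on the compact set $\partial \bbD$, the right hand side coincides with the Poisson integral $\mathcal{P}[h](z)$ of the continuous function $h$.

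Finally, I would invoke the classical result on the Dirichlet problem: the Poisson integral of a continuous function on $\partial \bbD$ extends continuously to $\overline{\bbD}$ and assumes the given function as its boundary values. Applied to $h$ this gives exactly that $g=\mathcal{P}[h]$ extends continuously to $\overline{\bbD}$ by $h$, which is the desired conclusion. An equivalent formulation of the last two steps is to set $u\dfn g-\mathcal{P}[h]$: then $u$ is bounded and harmonic on $\bbD$ with radial boundary limit $0$ almost everywhere, so by the uniqueness part of the Poisson representation $u\equiv 0$.

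The substantial inputs are two classical facts: Fatou's theorem combined with the Poisson representation of bounded harmonic functions, and continuity up to the boundary of the Poisson integral of a continuous function. Neither presents a real obstacle; the only mild care needed is to ensure that the Poisson representation applies to bounded holomorphic functions (via their real and imaginary parts, which are bounded harmonic), and to appeal to uniqueness of bounded harmonic functions with prescribed a.e.\ boundary data when passing from $g$ to $\mathcal{P}[h]$. Both are standard and may be cited from any textbook on Hardy spaces or boundary behavior of harmonic functions on the disk.
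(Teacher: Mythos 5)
Your proposal is correct and follows essentially the same route as the paper: identify $g$ with the Poisson integral of $h$, then invoke continuity up to the boundary of the Poisson integral of a continuous function. The only cosmetic difference is that the paper derives the representation $g=\mathcal{P}[h]$ directly, by writing $g$ as a Poisson integral over circles of radius $r<1$ and passing to the limit $r\to1$ via dominated convergence, whereas you quote the Poisson representation theorem for bounded harmonic functions as a black box; the substance is the same.
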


\begin{proof}
The real and imaginary parts of $g$ are harmonic functions.
Due to the Poisson formula (applied to both $\Re g$ and $\Im g$) for $|z|<r$ we have
\begin{equation}
\label{eq:Pois}
	g(z) = \frac{1}{2\pi} \int_{0}^{2\pi } g(r e^{i\alpha})  P(re^{i\alpha},z)  \dx \alpha ,
\end{equation}
 where $P$ is the Poisson kernel,
\[
P(re^{i\alpha}, R e^{i \beta}) = \frac{r^2-R^2}{r^2+R^2- 2rR \cos  (\alpha-\beta)}.
\]
The integrand in \eqref{eq:Pois} is bounded as $r$ tends to $1$, and it tends to $h(e^{i\alpha}) P(e^{i\alpha},z) $ almost everywhere. Due to the Lebesgue
bounded convergence theorem,
\[
g(z) = \frac{1}{2\pi} \int_{0}^{2\pi } h(e^{i\alpha})  P(e^{i\alpha},z) \dx \alpha.
\]
Due to the Poisson theorem, the right-hand side provides the solution of the Dirichlet boundary problem for Laplace equation.
Thus $\Re g$ and $\Im g$ satisfy
\[\lim_{z \to e^{i\alpha}} \Re g(z)= \Re h(e^{i\alpha}),\quad \lim_{z \to e^{i\alpha}} \Im g(z)= \Im h(e^{i\alpha}).\qedhere\]
\end{proof}

 \appendix

\section{Behavior of \texorpdfstring{$\tau_f$}{τ\_f} near \texorpdfstring{$+\i\infty$}{+i∞}}\label{sec_inf}

Here, we study the behavior of $\tau_f(\omega)$ as the imaginary part of $\omega$ tends to $+\infty$.
The map $\CZ\ni z \mapsto \exp(2\pi\i z)\in \bbC-\set{0}$
is an isomorphism of Riemann surfaces. Thus, $\CZ$ may be compactified as a Riemann surface $\CZbar$ isomorphic to the Riemann sphere, by adding two points $+\i\infty$ and $-\i\infty$ (the notation suggests that $\pm \i \infty$ is the limit of points $z\in \CZ$ whose imaginary part tends to $\pm \infty$).  We shall denote by
\[\HpmZbar=(\HpmZ)\cup (\RZ)\cup \set{\pm\i\infty}\]
the closure of $\HpmZ$ in $\CZbar$.

The following construction is usually referred to as {\em conformal welding}. It is customarily studied in the case of non-smooth circle homeomorphisms and is trivial in the
case of analytic circle diffeormorphisms.

The analytic  circle diffeomorphism $f$ may be viewed as an analytic diffeomorphism between the boundary of $\HpZbar$ and the boundary of $\HmZbar$.
If we glue $\HpZbar$ to $\HmZbar$ via $f$, we obtain a Riemann surface which is isomorphic to $\CZbar$. We may choose the isomorphism $\phi$ such that
$\phi(\pm\i\infty)=\pm\i \infty$. Such an isomorphism is not unique, but it is unique up to addition of a constant in $\CZ$. It restricts to univalent maps $\phi^\pm\colon\HpmZ\to
\CZ$ which extend univalently to neighborhoods of $\HpmZbar$ and satisfy $\phi^-\circ f = \phi^+$ near the boundary of $\HpZbar$.

Holomorphy of $\phi^\pm$ near $\pm\i\infty$ yields that
\[\phi^\pm(z) = z + C^\pm + o(1)\text{ as }z\to \pm\i\infty\]
for appropriate constants $C^\pm\in \CZ$.
Since $\phi$ is unique up to addition of a constant, the difference
\[C_f\dfn C^+-C^-\]
only depends on $f$ and will be referred as the {\em welding constant} of $f$.

\begin{theorem}(Behavior near $+i\infty$)
Let $f\colon\RZ\to \RZ$ be an orientation preserving analytic circle diffeomorphism and let $C_f$ be its welding constant. As
$\omega$ tends to  $+\i\infty$ in $\CZ$,
\[
\tau_f(\omega) = \omega +  C_f + o(1).
\]
\end{theorem}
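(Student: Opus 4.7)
The plan is to build a quasiconformal homeomorphism $\Psi\colon E(f_\omega) \to \mathcal{E}_{\omega + C_f}$ whose dilatation tends to $1$ rapidly as $\Im\omega \to +\infty$, then invoke Lemma~\ref{lem-quasi-dist}.

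First, I would sharpen the welding asymptotics. Since $\phi^\pm$ extend holomorphically through $\pm \i\infty$ as maps of Riemann surfaces and are $\bbZ$-equivariant, the $\bbZ$-periodic holomorphic function $\phi^\pm(z) - z - C^\pm$ vanishes at the puncture $\pm\i\infty$. Passing to the local uniformizer $q = e^{\pm 2\pi \i z}$ near $\pm\i\infty$ gives
\[
\phi^\pm(z) - z - C^\pm = O(e^{-2\pi |\Im z|}) \quad\text{uniformly in }\Re z,
\]
with the same bound on the derivative by Cauchy's formula.

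The core step is to split $A_\omega$ at the middle circle $M = \{\Im z = h/2\}/\bbZ$ (with $h = \Im\omega \gg 1$) and define two charts into $\CZ$: $\Phi^b(z) = \phi^+(z)$ on the bottom half and $\Phi^t(z) = \phi^-(z - \omega) + (\omega + C_f)$ on the top half. The shift by $\omega + C_f$ is chosen precisely so that the welding identity $\phi^- \circ f = \phi^+$ converts the nonlinear boundary gluing $x \sim f(x) + \omega$ of $A_\omega$ into the pure translation
\[
\Phi^t(f(x) + \omega) = \phi^-(f(x)) + (\omega + C_f) = \phi^+(x) + (\omega + C_f) = \Phi^b(x) + (\omega + C_f)
\]
in $\CZ$. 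On $M$, the sharpened estimates above combined with $C_f = C^+ - C^-$ give $|\Phi^b - \Phi^t|$ and $|(\Phi^b - \Phi^t)'|$ both of size $O(e^{-\pi h})$.

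Then I would glue $\Phi^b$ and $\Phi^t$ by a linear cutoff interpolation on a fixed-width strip around $M$; the $C^1$ estimate makes the resulting $\bbZ$-periodic map $\Psi\colon A_\omega \to \CZ$ quasiconformal with dilatation $K = 1 + O(e^{-\pi h})$ and, by the boundary analysis, it descends to a $K$-quasiconformal homeomorphism $E(f_\omega) \to \mathcal{E}_{\omega + C_f}$ sending the $\RZ$-loop to the $\RZ$-loop. Lemma~\ref{lem-quasi-dist} then gives $d_\bbH\bigl(\tau_f(\omega), \omega + C_f\bigr) \leq \log K = O(e^{-\pi h})$, and since the hyperbolic $\varepsilon$-ball around a point of imaginary part $\sim h$ has Euclidean diameter $O(h\varepsilon)$, we conclude $\tau_f(\omega) - \omega - C_f = O(h e^{-\pi h}) = o(1)$. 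The main technical point is verifying this dilatation bound, which reduces to the uniform $C^1$ exponential decay in the first step.
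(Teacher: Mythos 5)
Your approach is correct, but it is genuinely different from the paper's. You build an explicit quasiconformal map from $E(f_\omega)$ to $\mathcal{E}_{\omega + C_f}$ — taking $\phi^+$ on the bottom half of $A_\omega$, a translate of $\phi^-$ on the top half, and linearly interpolating in a fixed-width band around the middle circle, where the exponential decay of $\phi^\pm(z) - z - C^\pm$ towards the punctures makes the Beltrami coefficient $O(e^{-\pi \Im\omega})$ — and then invoke Lemma~\ref{lem-quasi-dist} and a local comparison of hyperbolic and Euclidean metrics high up in $\HZ$. The paper instead proves convergence of the normalized uniformizing maps $\phi^\pm_\omega(z) = \phi_\omega(z) - \phi_\omega(0)$ (resp.\ $\phi_\omega(z+\omega)-\phi_\omega(f(0)+\omega)$) to the welding pair $\phi^\pm$ via a normal family argument, and then obtains the asymptotics purely by comparing constant Fourier coefficients: since $\phi_\omega - \mathrm{id}$ is holomorphic and $\bbZ$-periodic on $A_\omega^+$, the integral of $\phi_\omega(z)-z$ over $\RZ$ equals the integral over $\omega + \RZ$, which after rewriting gives $C_\omega^+ = C_\omega^- - \omega + \tau_f(\omega)$ identically. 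Your approach avoids the normal family argument entirely and yields a quantitative rate $O\bigl(\Im\omega\cdot e^{-\pi \Im\omega}\bigr)$ rather than a bare $o(1)$; the paper's is shorter and needs no quasiconformal machinery. The two gaps you should fill are small: you should verify that the interpolated map is a global homeomorphism (it is a local homeomorphism with Jacobian bounded away from $0$ for large $\Im\omega$, restricts to univalent maps away from the interpolation band, and the images of the three pieces are disjoint for large $\Im\omega$, so degree considerations give injectivity); and you should note that the lemma gives a bound on the hyperbolic distance in $\bbH$ between specific representatives with matching markings on $H_1$, which your map does provide since it sends $\RZ$ to $\RZ$ and the path from $0$ to $f_\omega(0)$ to a path from $\phi^+(0)$ to $\phi^+(0) + \omega + C_f$.
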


The proof goes as follows.

\setcounter{step}{0}
\step
Recall that $A_\omega\subset \CZ$ is the annulus bounded by the circles $\RZ$ and $\RZ+\omega$.
The isomorphism between the complex torus $E(f_\omega)$ and $\cal{E}_{\tau_f(\omega)}$ induces a univalent map $\phi_\omega\colon A_\omega\to \CZ$ which extends univalently to a
neighborhood of the closed annulus $\overline A_\omega$, with $\phi_\omega(f_\omega(z)) = \phi_\omega(z) + \tau_f(\omega)$ in a neighborhood of $\RZ$.

\step As $\omega\to +\i \infty$, the sequence of univalent maps
\[\phi_\omega^+\colon z\mapsto \phi_\omega(z)-\phi_\omega(0)\]
converges locally uniformly in $\HpZ$ to a limit $\phi^+\colon\HpZ\to \CZ$, and the sequence of univalent maps
\[\phi_\omega^-\colon z\mapsto \phi_\omega(z+\omega)-\phi_\omega\bigl(f(0)+\omega\bigr)\]
converges  locally uniformly in $\HmZ$ to a limit $\phi^-\colon\HmZ\to \CZ$. In addition, the maps
$\phi^\pm\colon\HpZ\to \CZ$  form a pair of univalent maps provided by the welding construction.

\step Comparing constant Fourier coefficients of $\phi_\omega$, $\phi^+$ and $\phi^-$, we deduce that
as $\omega\to +\i\infty$, we have
\[C^++\phi_\omega(0) = -\omega+ C^-+\phi_\omega\bigl(f(0)+\omega\bigr) + o(1), \]
whence
\[\tau_f(\omega) = \phi_\omega\bigl(f(0)+\omega\bigr)-\phi_\omega(0) = \omega + C^+-C^- + o(1)=\omega+C_f+o(1).\]

\subsection{The map \texorpdfstring{$\phi_\omega$}{φ\_ω}}

Let $\delta>0$ be sufficiently tiny so that $f\colon\RZ\to \RZ$ extends univalently to the annulus $B_\delta\dfn \Set{z\in \CZ|{\delta>|\Im(z)|}}$. Set
\[A_\omega^+ \dfn  A_\omega\cup B_\delta\cup \bigl(\omega+ f(B_\delta)\bigr).\]
The complex torus $E(f_\omega)$ is the quotient of $A_\omega^+$ where $z\in B_\delta$ is identified to $f_\omega(z)\in f(B_\delta)+\omega$.

An isomorphism between $E(f_\omega)$ and $\cal{E}_\tau\dfn \bbC/(\bbZ+\tau\bbZ)$ sending the homotopy class of $\RZ$ in $E(f_\omega)$ to the homotopy class of $\RZ$ in
$\cal{E}_{\tau_f(\omega)}$ will lift to a univalent map $\phi_\omega\colon A_\omega^+\to \CZ$ sending $\RZ$ to a curve homotopic to $\RZ$, preserving orientation. The following
relation then holds on $B_\delta$:
\[\phi_\omega \circ f_\omega = \phi_\omega + \tau_f(\omega).\]

\subsection{Convergence of \texorpdfstring{$\phi_\omega^\pm$}{φ\_ω\textasciicircum±}}

As $\omega\to +\i\infty$, the open sets $A_\omega^+$ eat every compact subset of $\HpZ\cup B_\delta$. The sequence of univalent maps $\phi_\omega^+\colon A_\omega^+\to \CZ$ defined
by
\[\phi_\omega^+(z) \dfn \phi_\omega(z)-\phi_\omega(0)\]
is normal and any limit value $\phi^+\colon\HpZ\cup B_\delta$ satisfies $\phi^+(0)=0$. It cannot be constant since each $\phi_\omega^+$ sends $\RZ$ to a homotopically nontrivial
curve in $\CZ$ passing through $0$. So, any limit value $\phi^+\colon\HpZ\cup B_\delta\to \CZ$ is univalent.

Similarly, as $\omega\to +\i\infty$, the open sets
\[A_\omega^-\dfn -\omega + A_\omega^+\]
eat every compact subset of $\HmZ\cup f(B_\delta)$. In addition, the sequence of univalent maps
$\phi_\omega^-\colon A_\omega^-\to \CZ$ defined by
\[\phi_\omega^-(z)\dfn \phi_\omega(z+\omega)-\phi_\omega\bigl(f(0)+\omega\bigr)\]
is normal and any limit value $\phi^-\colon\HZ\cup f(B_\delta)\to \CZ$ is univalent and satisfies $\phi^-\bigl(f(0)\bigr)=0$.

Passing to the limit on the following relation, valid on $B_\delta$:
\begin{align*}
\phi_\omega^-\circ f (z) &=\phi_\omega\bigl(f(z)+\omega) - \phi_\omega\bigl(f(0)+\omega\bigr) \\
& = \phi_\omega(z) + \tau_f(\omega) - \phi_\omega\bigl(f(0)+\omega\bigr) = \phi_\omega(z) -\phi_\omega(0) = \phi_\omega^+(z),
\end{align*}
we get the following relation, valid on $B_\delta$:
\[\phi^-\circ f = \phi^+.\]

It follows that the pair $(\phi^-,\phi^+)$ induces an isomorphism from $\big(A_\omega^+\sqcup A_\omega^-\bigr)/f$ (we identify $z\in B_\delta\subseteq A_\omega^+$ to $f(z)\in
f(B_\delta)\subseteq A_\omega^-$) to $\CZ$. Therefore, $\phi^-$ and $\phi^+$ coincide with the unique isomorphisms arising from the welding construction, normalized by the
conditions $\phi^+(0)=\phi^-\bigl(f(0)\bigr) = 0$.
This uniqueness shows that there is only one possible pair of limit values. Thus, the sequences
$\phi_\omega^-\colon A_\omega^-\to \CZ$ and $\phi_\omega^+\colon A_\omega^+\to \CZ$ are convergent.

\subsection{Comparing Fourier coefficients}

Note that $z\mapsto \phi_\omega^\pm(z)-z$ and $z\mapsto \phi^\pm(z)$ are well-defined on $\RZ$ with values in $\bbC$. The previous convergence implies:
\[C_\omega^+\dfn \int_{\RZ} \bigl(\phi_\omega^+(z)-z\bigr)\, {\rm d}z\underset{\omega\to +\i\infty}\longrightarrow C^+\dfn \int_{\RZ} \bigl(\phi^+(z)-z\bigr)\, {\rm d}z\]
and
\[C_\omega^-\dfn \int_{\RZ} \bigl( \phi_\omega^- (z)-z \bigr)\, {\rm d}z\underset{\omega\to +\i\infty}\longrightarrow C^-\dfn \int_{\RZ} \bigl(\phi^-(z)-z\bigr)\, {\rm
d}z.\]
Since $\phi_\omega$ is holomorphic on $A_\omega^+$, we have
\[\int_{\RZ} \bigl(\phi_\omega(z)-z\bigr)\, {\rm d}z = \int_{\omega+ \RZ}\bigl( \phi_\omega(z)-z\bigr)\, {\rm d}z =
 \int_{\RZ}\bigl( \phi_\omega(t+\omega)-t\bigr)\, {\rm d}t-\omega.\]
 Thus,
 \begin{align*}
 C_\omega^+ & \dfn \int_{\RZ} \bigl(\phi_\omega^+(z)-z\bigr)\, {\rm d}z \\
 & =  \int_{\RZ} \bigl(\phi_\omega(z)-z\bigr)\, {\rm d}z - \phi_\omega(0)\\
 & =  \int_{\RZ}\bigl( \phi_\omega(t+\omega)-t\bigr)\, {\rm d}t-\omega - \phi_\omega(0) \\
 &=  \int_{\RZ} \bigl(\phi_\omega^-(t)-t\bigr)\, {\rm d}t -\omega + \phi_\omega\bigl(f(0)+\omega\bigr)-\phi_\omega(0)= C_\omega^--\omega+\tau_f(\omega).
 \end{align*}
 As $\omega\to +\i\infty$, we therefore have
 \[C^++o(1) = C^-+o(1)-\omega+\tau_f(\omega)\]
 which yields
 \[\tau_f(\omega) = \omega + C^+-C^- +o(1) = \omega + C_f+o(1).\]

\section{Tsujii's theorem}
\label{sec-tsujii}

For completeness, we now present a proof of Tsujii's Theorem \ref{th_Tsujii} which we believe is a simplification of the original one, although the ideas are essentially the same.
The main argument in Tsujii's proof is the following.

\begin{proposition}\label{prop_Tsujii}
Let $f\colon\RZ\to \RZ$ be a ${\cal C}^2$-smooth orientation preserving circle diffeomorphism with irrational rotation number $\theta\in\RZ$. If $p/q$ is an approximant to $\theta$ given by the continued fraction algorithm, then there is an $\omega\in \RZ$ satisfying
\[ |\omega|<e^{D_f}\cdot |\theta-p/q|\quadand \rot(f_\omega)=p/q.\]
\end{proposition}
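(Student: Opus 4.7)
My plan is to prove directly that $\rot(f_{\omega^-})\leq p/q$ for $\omega^-:=-e^{D_f}(\theta-p/q)$, assuming without loss of generality $p/q<\theta$ (the opposite case is symmetric after applying the result to $x\mapsto -f(-x)$). Since $\omega\mapsto\rot(f_\omega)$ is continuous and non-decreasing with $\rot(f_0)=\theta>p/q$, the intermediate value theorem then yields some $\omega\in(\omega^-,0]$ with $\rot(f_\omega)=p/q$, giving $|\omega|\leq e^{D_f}(\theta-p/q)$. Equivalently, it suffices to exhibit some $x_0\in\RZ$ with $(F+\omega^-)^{\circ q}(x_0)\leq x_0+p$, where $F$ is a lift of $f$.

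Setting $d_k:=(F+\omega^-)^{\circ k}(x)-F^{\circ k}(x)$, the mean-value recursion $d_0=0$, $d_{k+1}=F'(\eta_k)d_k+\omega^-$ gives $d_q=\omega^- B(x)$ with $B(x):=\sum_{k=0}^{q-1}\prod_{j=k+1}^{q-1}F'(\eta_j)$, so the target inequality becomes $N(x_0)\leq|\omega^-|B(x_0)$, where $N(x):=F^{\circ q}(x)-x-p$. Let $A(x):=\sum_{m=0}^{q-1}(F^{\circ m})'(F^{\circ(q-m)}(x))$ be the unperturbed analogue of $B(x)$. Summing the bounds $|\log F'(\eta_j)-\log F'(F^{\circ j}(x))|\leq\int_{[\eta_j,F^{\circ j}(x)]}|F''/F'|$ across $j$ and using that these intervals lie in essentially disjoint small neighborhoods of the $q$ distinct points of the near-periodic orbit yields $\sum_j|\log F'(\eta_j)-\log F'(F^{\circ j}(x))|\leq\int_\RZ|F''/F'|=D_f$, hence $B(x)\geq e^{-D_f}A(x)$. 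The problem reduces to exhibiting $x_0$ with $N(x_0)/A(x_0)\leq\theta-p/q$.

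For this last step I use the elementary inequality $\inf_x N(x)/A(x)\leq\int N\,d\mu/\int A\,d\mu$, valid for any probability measure $\mu$ with $A>0$, applied to the unique $f$-invariant probability measure $\mu$ (which exists by Denjoy's theorem, since $f$ is ${\cal C}^2$ with irrational rotation number). The topological conjugacy lift $h$ with $h\circ F=h+\theta$ evaluates the numerator as $\int_\RZ N\,d\mu=\int_0^1[h^{-1}(y+q\theta)-h^{-1}(y+p)]\,dy=q\theta-p=q(\theta-p/q)$, using the identity $\int_0^1 h^{-1}(y+s)\,dy-\int_0^1 h^{-1}(y)\,dy=s$. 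For the denominator, $f$-invariance of $\mu$ reduces $\int A\,d\mu$ to $\sum_{m=0}^{q-1}\int(F^{\circ m})'\,d\mu$, and Jensen's inequality combined with vanishing of the Lyapunov exponent $\int\log F'\,d\mu=0$ (a standard consequence of unique ergodicity and $\int_0^1(F^{\circ n})'\,dx=1$) gives $\int(F^{\circ m})'\,d\mu\geq 1$, hence $\int A\,d\mu\geq q$. The main obstacle is the Denjoy-distortion step $B(x)\geq e^{-D_f}A(x)$: its soundness depends on the perturbations $|d_j|$ being smaller than the gaps of the near-periodic $q$-orbit, and this is precisely where the continued-fraction-convergent hypothesis on $p/q$ becomes essential, ensuring sufficient separation of those gaps.
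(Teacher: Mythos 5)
Your approach is genuinely different from the paper's. The paper starts from the Denjoy conjugacy $\phi$, applies a Tonelli/Fubini averaging lemma (Lemma~\ref{lemma_tonelli}) to select a base point $x=\phi(t)$ for which the disjoint intervals $I_j=\phi(T_j)$, $T_j=(j\theta,j\theta+\delta)$, have small total Lebesgue measure $\sum|I_j|\le q^2|\theta-p/q|$; it then takes $\omega_0$ to be the \emph{first} parameter with $\rot(f_{\omega_0})=p/q$, so that for $\omega<\omega_0$ the perturbed orbit provably stays inside the chain $I_j$, and closes with Cauchy--Schwarz plus the Denjoy distortion estimate applied along the unperturbed chain $I_j\to I_{j+1}\to\cdots\to I_q$, whose disjointness is a purely arithmetic consequence of the convergent hypothesis. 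You instead track the first variation of the orbit directly and replace the Tonelli selection by an invariant-measure computation. Those parts of your argument are correct and rather elegant: $\int N\,d\mu=q(\theta-p/q)$, the zero Lyapunov exponent $\int\log F'\,d\mu=0$ and Jensen give $\int A\,d\mu\ge q$, and the averaging inequality $\inf N/A\le\int N\,d\mu/\int A\,d\mu$ is sound.

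However, the distortion step $B(x_0)\ge e^{-D_f}A(x_0)$ is a genuine gap, and your last sentence acknowledges it without closing it. To sum the pointwise bounds $|\log F'(\eta_j)-\log F'(F^{\circ j}(x_0))|\le\int_{[\eta_j,F^{\circ j}(x_0)]}|F''/F'|$ to $D_f$, you need the intervals $[\eta_j,F^{\circ j}(x_0)]\subset[F_{\omega^-}^{\circ j}(x_0),F^{\circ j}(x_0)]$ to be essentially disjoint. The continued-fraction hypothesis controls the spacing of the \emph{rotation} orbit $\{j\theta\}$, not of its image under $\phi$: for a $C^2$ diffeomorphism the unique invariant measure can be singular, so $\phi$ can compress the Euclidean gaps $|F^{\circ j}(x_0)-F^{\circ j'}(x_0)|$ arbitrarily, while the displacements $|d_j|$ are governed by derivatives of $F$ along the perturbed orbit, not by $\mu$-measure. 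Nothing in your choice of $x_0$ as the minimizer of $N/A$ makes the displacements stay below those gaps, and by fixing $\omega^-$ up front (rather than taking the first threshold, as the paper does) you also lose the one piece of dynamical control --- $\rot(f_\omega)\ge p/q$ for intermediate $\omega$ --- that would keep the perturbed orbit combinatorially aligned. As written, the key inequality $B(x_0)\ge e^{-D_f}A(x_0)$ is unproven, and the argument does not establish the proposition.
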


\begin{proof}
According to a Theorem of Denjoy, there is a homeomorphism $\phi\colon\RZ\to \RZ$ such that
$\phi(x+\theta)=f\circ \phi(x)$ for all $x\in \RZ$.

Without loss of generality, let us assume that $\theta<p/q$ and set $\delta\dfn p-q\theta$. Let $T\subset \RZ$ be the union of intervals
\[T\dfn \bigcup_{1\leq j\leq q} T_j\quad\text{with}\quad T_j\dfn (j\theta,j\theta+\delta).\]
Since $p/q$ is an approximant of $\theta$, this is a disjoint union of $q$ intervals of length $\delta$.
According to Lemma \ref{lemma_tonelli} below, we may choose $t\in \RZ$ such that the Lebesgue measure of $\phi(T+t)$ is at most $q\delta$.

Now, set $x\dfn \phi(t)$ and for $j\in \bbZ$, set
\[x_j\dfn f^{\circ j}(x) = \phi(t+j\theta)\quadand I_j\dfn (x_j,x_{j-q}) = \phi(T_j).\]
The intervals $I_1$, $I_2=f(I_1)$, \ldots, $I_q=f^{\circ q}(I_1)$ are disjoint and the sum of their lengths satisfies
\[\sum_{j=1}^q |I_j|\leq q\delta = q^2\cdot |\theta-p/q|.\]
As $\omega\in\RZ$ increases from $0$, the rotation number $\rot(f_\omega)\in\RZ$ increases from $\theta$, and there is a first $\omega_0$ such that $\rot(f_{\omega_0})=p/q$.
For $j\in [0,q]$, set
\[y_j\dfn (f_{\omega_0})^{\circ j}(x)\quadand z_j\dfn f^{\circ (q-j)}(y_j).\]
Finally, for $j\in [1,q]$, set
\[J_j\dfn  \bigl(f(y_{j-1}),y_j\bigr) = \bigl(f(y_{j-1}),f(y_{j-1})+\omega_0\bigr)\quadand
K_j\dfn (z_{j-1},z_j).\]
Then, $(z_0,z_1,\ldots,z_q)$ is a subdivision of $(z_0,z_q)$ (see Figure \ref{fig:intervals}).

 \begin{figure}[htbp]
 \centerline{
 \begin{picture}(0,0)%
 \includegraphics{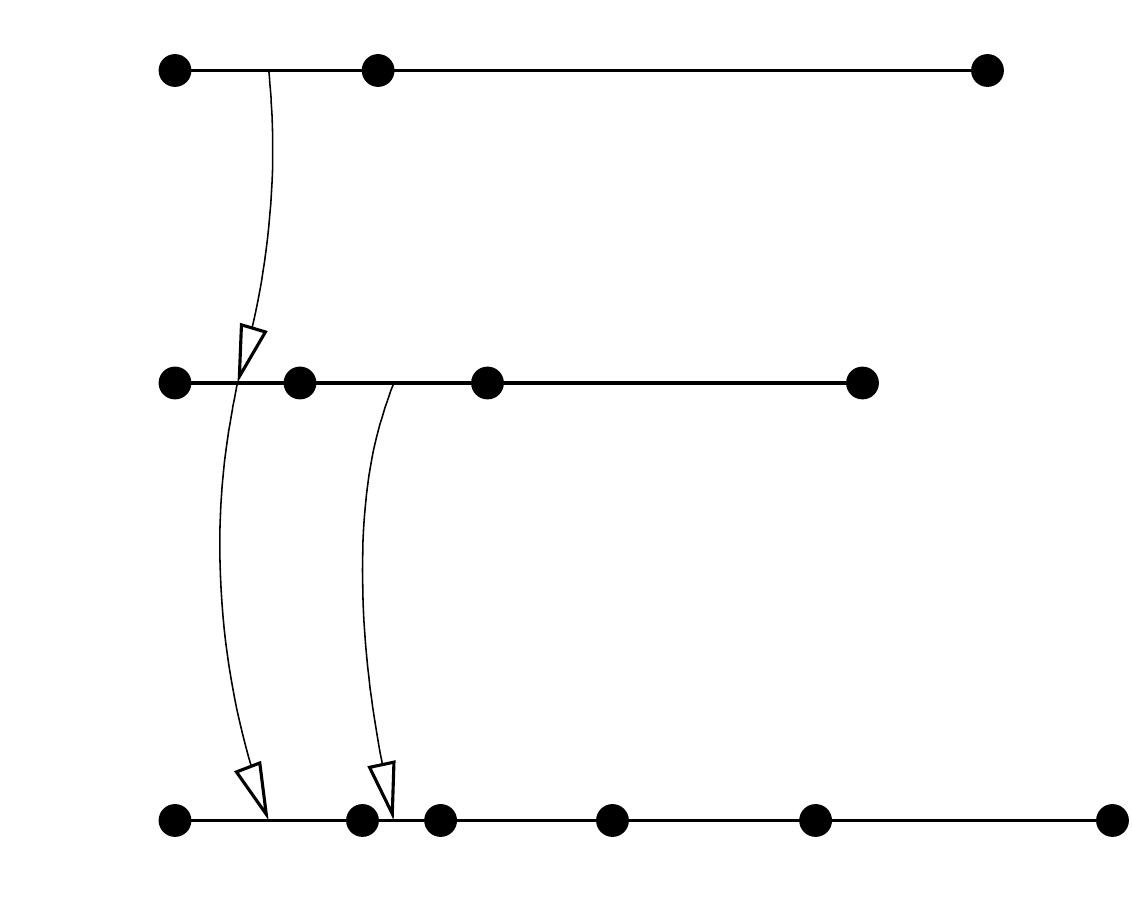}%
 \end{picture}
 \setlength{\unitlength}{3947sp}%
 \begin{picture}(5423,4311)(961,-5839)
 \put(5551,-1711){$\scriptstyle x_{1-q}$}%
 \put(2776,-4486){$\scriptstyle f^{\circ (q-2)}$}%
 \put(2326,-2611){$\scriptstyle f$}%
 \put(1426,-4486){$\scriptstyle f^{\circ (q-2)}$}%
 \put(2276,-3211){$\scriptstyle f(y_1)$}%
 \put(2776,-3211){$\scriptstyle J_2$}%
 \put(2176,-1711){$\scriptstyle J_1$}%
 \put(1576,-1711){$\scriptstyle x_1$}%
 \put(976, -1936){$\scriptstyle I_1$}%
 \put(1051,-3436){$\scriptstyle I_2$}%
 \put(1126,-5536){$\scriptstyle I_q$}%
 \put(1426,-5311){$\scriptstyle x_q=z_0$}%
 \put(2801,-5761){$\scriptstyle K_2$}%
 \put(3076,-5311){$\scriptstyle z_2$}%
 \put(2476,-5311){$\scriptstyle z_1$}%
 \put(2226,-5761){$\scriptstyle K_1$}%
 \put(3701,-5311){$\scriptstyle z_{q-1}$}%
 \put(4676,-5311){$\scriptstyle y_q=z_q$}%
 \put(4126,-5761){$\scriptstyle J_q=K_q$}%
 \put(1576,-3211){$\scriptstyle x_2$}%
 \put(6151,-5311){$\scriptstyle x_0$}%
 \put(4951,-3211){$\scriptstyle x_{2-q}$}%
 \put(2776,-1711){$\scriptstyle y_1$}%
 \put(3376,-3211){$\scriptstyle y_2$}%
 \end{picture}
 }
 \caption{The intervals $I_j$, $J_j$ and $K_j$. \label{fig:intervals}}
 \end{figure}

As $\omega$ increases from $0$ to $\omega_0$, the point $(f_\omega)^{\circ q}(x)$ increases from $x_q$ to $y_q$ but remains in $I_q$ since $\rot(f_\omega)$ remains less than
$p/q$. Thus,
$(z_0,z_q)=(x_q,y_q)\subseteq I_q$ and so,
\[|I_q| \geq |z_q-z_0| =\sum_{j=1}^q |K_j|.\]
In addition, $J_j\subset I_j$ and $K_j = f^{\circ (q-j)}(J_j)$. It follows from Denjoy's Lemma \ref{denjoy1} that
\[\frac{|K_j|}{|I_q|}\geq e^{-D_f} \frac{|J_j|}{|I_j|}= e^{-D_f}\frac{\omega_0}{|I_j|}.\]
Now, according to the Cauchy-Schwarz Inequality, we have
\[q^2 = \left(\sum_{j=1}^q\sqrt{|I_j|} \cdot \frac{1}{\sqrt{|I_j|}}\right)^2\leq \left(\sum_{j=1}^q |I_j|\right)\cdot \left(\sum_{j=1}^q \frac{1}{|I_j|}\right) \leq q^2\cdot
|\theta-p/q|\cdot \sum_{j=1}^q \frac{1}{|I_j|}.\]
Thus,
\[|I_q|\geq \sum_{j=1}^q |K_j| \geq e^{-D_f}{\omega_0 |I_q|}\cdot \sum_{j=1}^q  \frac{1}{|I_j|}
\geq \frac{e^{-D_f}\omega_0 |I_q|}{ |\theta-p/q|}\]
and so,
\[\omega_0\leq  e^{D_f}\cdot  |\theta-p/q|.\qedhere\]
\end{proof}

\begin{lemma}\label{lemma_tonelli}
Let $\phi\colon\RZ\to \RZ$ be a homeomorphism. Then, for any measurable set $T\subseteq \RZ$, there is a $t\in\RZ$ such that
\[{\rm Leb}\bigl(\phi(T+t)\bigr)\leq {\rm Leb}(T).\]
\end{lemma}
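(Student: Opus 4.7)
The plan is to show that the average of $\mathrm{Leb}\bigl(\phi(T+t)\bigr)$ over $t\in\RZ$ equals $\mathrm{Leb}(T)$; once this is established, a pigeonhole argument immediately produces a $t$ at which $\mathrm{Leb}\bigl(\phi(T+t)\bigr)\le\mathrm{Leb}(T)$.

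The key manipulation is to rewrite the indicator function. For any $y,t\in\RZ$, we have $y\in\phi(T+t)$ if and only if $\phi^{-1}(y)-t\in T$, so
\[
\mathbf{1}_{\phi(T+t)}(y)=\mathbf{1}_T\bigl(\phi^{-1}(y)-t\bigr).
\]
Since $\phi$ is a homeomorphism, $\phi^{-1}$ is Borel measurable, so the right-hand side is a measurable function of $(y,t)$ on $\RZ\times\RZ$, and Fubini's theorem applies.

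Then I would compute
\[
\int_{\RZ}\mathrm{Leb}\bigl(\phi(T+t)\bigr)\,dt
=\int_{\RZ}\!\!\int_{\RZ}\mathbf{1}_T\bigl(\phi^{-1}(y)-t\bigr)\,dy\,dt
=\int_{\RZ}\!\!\int_{\RZ}\mathbf{1}_T\bigl(\phi^{-1}(y)-t\bigr)\,dt\,dy.
\]
For each fixed $y$, the inner integral is $\mathrm{Leb}(T)$ by translation invariance of Lebesgue measure on $\RZ$, so the double integral equals $\mathrm{Leb}(T)\cdot\mathrm{Leb}(\RZ)=\mathrm{Leb}(T)$. Hence the mean value of $t\mapsto\mathrm{Leb}(\phi(T+t))$ over $\RZ$ is exactly $\mathrm{Leb}(T)$, and some $t$ must give a value no larger than this mean.

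There is no real obstacle here; the only point to be careful about is measurability of $(y,t)\mapsto\mathbf{1}_{\phi(T+t)}(y)$, which is ensured by the fact that $\phi$, being a homeomorphism of $\RZ$, carries Borel sets to Borel sets, so $\phi(T+t)$ is measurable whenever $T$ is (and one may reduce to Borel $T$ by inner regularity of Lebesgue measure if $T$ is merely Lebesgue measurable).
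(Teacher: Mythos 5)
Your argument is correct and is essentially the paper's own proof: both compute the average of $t\mapsto\mathrm{Leb}\bigl(\phi(T+t)\bigr)$ by exchanging the order of integration (you in terms of indicator functions, the paper in terms of the pullback measure $\phi^*\mu$) and conclude by a pigeonhole/mean-value argument. One cosmetic remark: since the integrand is nonnegative, it is cleaner to invoke Tonelli rather than Fubini, as the paper does, which sidesteps any integrability hypothesis.
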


\begin{proof}
Let $\mu$ be the Lebesgue measure on $\RZ$.
According to Tonelli's theorem,
\begin{align*}
\int_{t\in \RZ} \mu\bigl(\phi(T+t)\bigr)\dx t & = \int_{t\in\RZ}\left(\int_{u\in T+t} \dx(\phi^* \mu)\right)\dx\mu\\
&= \int_{u\in \RZ}\left(\int_{t\in -T+u}\dx \mu\right)\dx(\phi^*\mu)\\
&= \int_{u\in \RZ}\mu(T)\dx(\phi^*\mu)\\
&= \mu(T)\cdot \mu\bigl(\phi(\RZ)\bigr) =\mu(T).
\end{align*}
So, the average of $\mu\bigl(\phi(T+t)\bigr)$ with respect to $t$ is equal to $\mu(T)$ and the result follows.
\end{proof}

Theorem \ref{th_Tsujii} follows easily from Proposition \ref{prop_Tsujii}: for $\beta>0$, let $S_\beta$ be the set of $\omega\in \RZ$ such that $\rot(f_\omega)$ is irrational
and such that there are infinitely many $p,q\in \bbZ$ satisfying $\bigl|\rot(f_\omega)-p/q\bigr|<1/q^{2+\beta}$. The set of
$\omega\in \RZ$ such that $\rot(f_\omega)$ is Liouville is the intersection of the sets $S_\beta$. So, it is sufficient to show that the ${\rm Leb}(S_\beta)=0$ for all
$\beta>0$. Note that
\[S_\beta = \limsup_{q\to +\infty} S_{\beta,q}\]
where $S_{\beta,q}$ is the set of $\omega\in \RZ$ such that $\rot(f_\omega)$ is irrational and such that $\bigl|\rot(f_\omega)-p/q\bigr|<1/q^{2+\beta}$ for some approximant
$p/q$ of $\rot(f_\omega)$.

Proposition \ref{prop_Tsujii} implies that $S_{\beta,q}$ is located in the $C/q^{2+\beta}$-neighborhood of the union of $q$ intervals where the rotation number is rational
with denominator $q$, where $C \dfn e^{D_f}$. So,
\[{\rm Leb}(S_{\beta,q})\leq 2q\cdot \frac{C}{q^{2+\beta}} = \frac{2C}{q^{1+\beta}}.\] In particular, for all $\beta>0$,
\[{\rm Leb}(S_\beta) = {\rm Leb}\left(\limsup_{q\to +\infty} S_{\beta,q}\right) \leq \limsup_{q\to +\infty}\sum_{r\geq q}\frac{2C}{r^{1+\beta}}= 0.\]

\section{The proof of Lemma \ref{lemma-qctwist}}
\label{sec-lem-qctwist}
To complete the proof of Lemma \ref{lemma-qctwist}, we will now construct a $K$-quasiconformal map
\[\Psi\colon\E(f^{\circ q})\to {\cal E}_\sigma \equiv E'\]
which sends the class of $\RZ$ in $\E(f^{\circ q})$ to the class of $\sigma\bbR/\sigma\bbZ$ in ${\cal E}_\sigma$. We will also show that
$\log K\leq 5D_f$.
The result then follows from Lemma \ref{lem-quasi-dist}.

The homeomorphism $\psi_j\colon C_j^-\to C_j^-$ given by
\[\psi_j(z) \dfn \xi_j(z)-s_{j+1}+r_j\]
fixes $r_j\in C_j^-$. Let $\tilde \psi_j\colon\bbR\to \bbR$ be the lift of $\psi_j\colon C_j^-\to C_j^-$ which fixes $\tilde r_j$ and let $\tilde \Psi_j\colon \overline S_j\to \overline S_j$ be its extension to $\overline S_j$ defined by
\[\tilde \Psi_j(x+\i y) \dfn \frac{y}{m_j} (x+\i m_j) + \left(1-\frac{y}{m_j}\right) \tilde \psi_j(x).\]
The homeomorphism $\tilde \Psi_j\colon\overline S_j\to \overline S_j$ restricts to the identity on $\bbR+\i m_j$ and descends to a homeomorphism $\Psi_j\colon\overline B_j\to \overline B_j$. By construction, the following diagram commutes:
\[
\xymatrix{
C_j^- \ar[r]^{\Psi_j} \ar[d]_{\xi_j} & C_j^-\ar[d]^{z\mapsto z-r_j+s_{j+1}} \\
C_{j+1}^+ \ar[r]_{\Psi_{j+1}} & C_{j+1}^+.
}\]
So, the collection of homeomorphisms $\Psi_j\colon\overline B_j\to \overline B_j$ induces a global homeomorphism $\Psi\colon \E(f^{\circ q})\to E'$ (see Fig. \ref{fig-Psi}). Since $ \tilde \Psi_j$ fixes $\tilde r_j$ and $\tilde s_j$, the homeomorphism $\Psi$ sends the homotopy class of $\delta$ in $\E(f^{\circ q})$ to the homotopy class of $\delta'$ in $E'$.

\begin{figure}[htbp]
\centerline{
\begin{picture}(0,0)%
\scalebox{.75}{\includegraphics{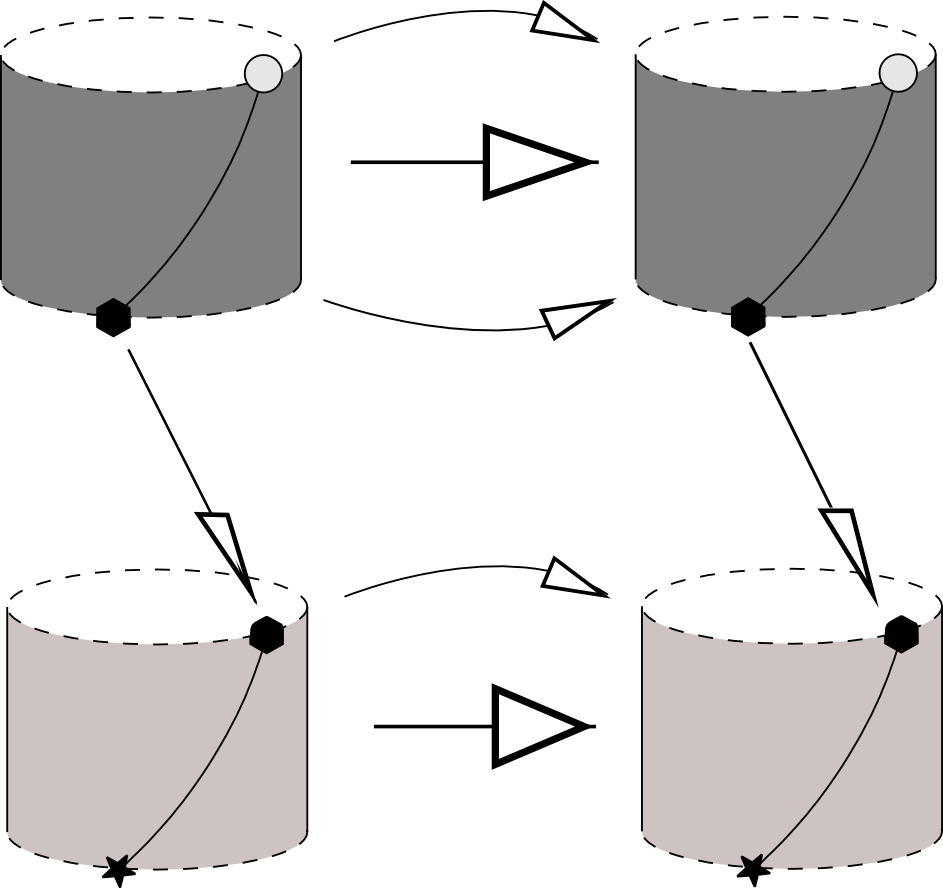}}
\end{picture}%
\setlength{\unitlength}{2960sp}%
\begin{picture}(7288,3676)(1189,-3368)
\put(1482,-1250){$\scriptstyle r_j$}
\put(1950,-50){$\scriptstyle s_j$}
\put(4000,-1250){$\scriptstyle r_j$}
\put(4500,-50){$\scriptstyle s_j$}
\put(1350,-3500){$\scriptstyle r_{j+1}$}
\put(1850,-2300){$\scriptstyle s_{j+1}$}
\put(5100,-511){$\mathbf{ B_j}$}
\put(5000,-950){$\scriptstyle C_j^{-}$}
\put(5000,-50){$\scriptstyle C_j^{+}$}
\put(850,-511){$\mathbf{ B_j}$}
\put(900,-950){$\scriptstyle C_j^{-}$}
\put(900,-50){$\scriptstyle C_j^{+}$}
\put(650,-2800){$\mathbf{ B_{j+1}}$}
\put(800,-2300){$\scriptstyle C_{j+1}^{+}$}
\put(800,-3200){$\scriptstyle C_{j+1}^{-}$}
\put(3850,-3500){$\scriptstyle r_{j+1}$}
\put(4350,-2300){$\scriptstyle s_{j+1}$}
\put(5100,-2800){$\mathbf{ B_{j+1}}$}
\put(5000,-2350){$\scriptstyle C_{j+1}^{+}$}
\put(5000,-3250){$\scriptstyle C_{j+1}^{-}$}
\put(2800,-1300){$\scriptstyle \psi_j$}
\put(2800,-3000){$ \Psi_{j+1}$}
\put(2800,-300){$ \Psi_{j}$}
\put(2900,-2000){$\scriptstyle id$}
\put(2900,  180){$\scriptstyle id$}
\put(1950,-1650){$\scriptstyle \xi_j$}
\put(4500,-1650){$\scriptstyle z \mapsto z-r_j+s_{j+1}$}
\end{picture}
}
\caption{ The maps $\Psi_{j}$ and $\Psi_{j+1}$ \label{fig:Psij}}
\label{fig-Psi}
\end{figure}

Now, it suffices to prove that the homeomorphism $\Psi\colon \E(f^{\circ q})\to E'$ is $e^{5D_f}$-quasiconformal.

The images of the curves $C_j^\pm$ in $\E(f^{\circ q})$ are analytic (because the glueing map $\xi_j$ is analytic), therefore quasiconformally removable. So, it is enough to prove that each $\Psi_j\colon B_j\to B_j$ is $e^{5D_f}$-quasiconformal. Equivalently, we must prove that
\[\left\|\frac{\partial  \tilde \Psi_j/\partial \bar z}{\partial  \tilde \Psi_j/\partial z}\right\|_\infty\leq k<1\quad\text{with}\quad \distance_\bbD(0,k)<5D_f ,\]
where $\distance_\bbD$ is the hyperbolic distance within the unit disk.

For readibility, we drop the index $j$ in the following computation:
\begin{align*}
\frac{\partial  \tilde \Psi/\partial \bar z}{\partial \tilde  \Psi/\partial z} (x+\i y)&=\frac{\partial \tilde  \Psi/\partial x+\i \partial  \tilde \Psi/\partial y}{\partial   \tilde \Psi/\partial x-\i \partial   \tilde \Psi/\partial y}(x+\i y)\\
&= \frac{\left(1-\frac{y}{m}\right)\cdot \bigl(\tilde \psi'(x) -1\bigr)-\frac{\i}{m}\bigl(\tilde \psi(x)-x\bigr)}{2+\left(1-\frac{y}{m}\right)\cdot \bigl(\tilde \psi'(x) -1\bigr)+\frac{\i}{m}\bigl(\tilde \psi(x)-x\bigr)}.
\end{align*}
This last quantity is of the form $(a-1)/(\bar a+1)$ with
\[\Re(a)=1+\left(1-\frac{y}{m}\right)\cdot \bigl(\tilde \psi'(x) -1\bigr)\quad\text{and}\quad
\Im(a) = \frac{\tilde \psi(x)-x}{m}.\]
Note that $\displaystyle \left|\frac{a-1}{\bar a+1}\right|=\left|\frac{a-1}{a+1}\right|$
and the Möbius transformation $a\mapsto \displaystyle \frac{a-1}{a+1}$ sends the right half-plane into the unit disk. So, it is enough to show that $a$ belongs to the right half-plane $\Set{z\in \bbC|\Re(z)>0}$ and that the hyperbolic distance within this half-plane between $1$ and $a$ is at most $5D_f$.

This hyperbolic distance is bounded from above by $\bigl|\Im(a)\bigr| + \bigl|\log \Re(a)\bigr|$.
Since $\tilde\psi\colon\bbR\to \bbR$ is an increasing diffeomorphism which fixes $r+\bbZ\in\bbR$, we have that $\tilde\psi'(x)>0$ and
 $\bigl|\tilde \psi(x)-x\bigr|<1$. In addition, $0<1-y/m<1$,  and so,
\[0<\min_\bbR \tilde \psi' \leq \Re(a)\leq \max_\bbR \tilde \psi' \quad \text{and}\quad \bigl|\Im(a)\bigr|\leq \frac{1}{m} = \frac{|\log \rho|}{\pi}\leq |\log \rho|\leq D_f\]
(the last inequality is given by Lemma \ref{th_as-Denjoy}).
The average of $\tilde \psi'$ on $[0,1]$ is equal to $\tilde \psi(1)-\tilde \psi(0)=1$. So, $\tilde \psi'$ takes the value $1$ and
\[{-\distortion_\bbR (\xi)} = {-\distortion_\bbR(\tilde\psi)} <\log \min_\bbR  (\tilde \psi') \leq 0\leq \log \max_\bbR (\tilde \psi') < \distortion_\bbR(\tilde\psi) = \distortion_\bbR(\xi).\]

Now, $dis_{\mathbb R} \xi$ is at most $4 D_f$ due to Lemma \ref{lemma-disxi}.
This gives the estimate on $\Re a$:
\[
\exp (-4D_f) \le \Re a \le \exp(4D_f),
\]
and thus the required estimate on the distance between $a$ and $1$.

\end{document}